\documentclass[12pt]{article}

\usepackage[utf8]{inputenc}
\usepackage[top=1in,bottom=1in,right=1in,left=1in]{geometry}
\usepackage{amssymb}
\usepackage{amsmath}
\usepackage{mathtools}
\usepackage{graphicx}
\usepackage{bm}
\usepackage{mathrsfs}
\usepackage{mathrsfs}
\usepackage{outlines}
\usepackage{enumitem}
\usepackage{hyperref}
\hypersetup{
  colorlinks   = true,    % Colours links instead of ugly boxes
  urlcolor     = blue,    % Colour for external hyperlinks
  linkcolor    = blue,    % Colour of internal links
  citecolor    = cyan      % Colour of citations
}

\usepackage[flushleft]{threeparttable}
\usepackage{minted}
\usepackage{float}
\usepackage{minted}
\usepackage[authoryear]{natbib}
\setcitestyle{semicolon,aysep={, },yysep={;}}
\usepackage{mwe}
\usepackage{booktabs,caption}
\usepackage{babel}
\usepackage{subcaption}

\usepackage{enumerate}
\usepackage{enumitem}

\usepackage{setspace}

\usepackage{comment}

\usepackage{amsthm}
\theoremstyle{plain}
\newtheorem{lemma}{Lemma}
\newtheorem*{theorem*}{Theorem}
\usepackage{dsfont}
\usepackage{bbm}

\usepackage{bbm}
\usepackage{tikz}

\usepackage{listings}
\usepackage{xcolor}
\definecolor{ForestGreen}{RGB}{34,139,34}

\DeclarePairedDelimiter\evaluat{.}{\rvert}
\reDeclarePairedDelimiterInnerWrapper\evaluat{nostarscaled}{\mathopen{}#2\mathclose{#3}}

\newcommand{\cA}{\mathcal{A}}
\newcommand{\cX}{\mathcal{X}}

\newcommand{\cP}{\mathcal{P}}
\newcommand{\cE}{\mathcal{E}}

\newcommand{\cM}{\mathcal{M}}
\newcommand{\e}{\varepsilon}

\usepackage[]{todonotes}

\definecolor{Green}{RGB}{34,139,34}

\input{ee.sty}

\renewcommand{\Pr}{\ensuremath{\mathrm{Pr}}}

\newtheorem{cor}{Corollary}
\newtheorem{prp}{Proposition}
\newtheorem{rmk}{Remark}
\newtheorem{thm}{Theorem}
\newtheorem{exm}{Example}
\newtheorem{dfn}{Definition}

\title{\textbf{\LARGE Equivalence testing with data-dependent and post-hoc equivalence margins}\footnote{We are grateful to Isaiah Andrews, Josha Dekker, Peter Gr\"unwald, Jesse Hemerik, Wouter Koolen, Wietze Koops, Diego Martinez-Taboada, Sam van Meer, Iosif Pinelis, Aaditya Ramdas and participants at the CWI E-readers group and the 13th Econometric Internal PhD Conference at Erasmus University Rotterdam for their insightful comments.}} %We also acknowledge the use of ChatGPT for proofreading and assistance in finding useful literature. Koobs conducted part of this research while visiting the Department of Economics at Boston University.}}%\footnote{This is a preliminary draft. Please do not cite or distribute without explicit permission of the authors.}}

\author{Stan Koobs\footnote{Econometric Institute, Erasmus University Rotterdam. Email: \href{mailto:koobs@ese.eur.nl}%
{koobs@ese.eur.nl}.}%
\and Nick W. Koning\footnote{Econometric Institute, Erasmus University Rotterdam. Email: \href{mailto:n.w.koning@ese.eur.nl}%
{n.w.koning@ese.eur.nl}.}}%\\

\date{\today}

\begin{document}

\maketitle

\vspace{-1.1cm}

\begin{abstract}
    Equivalence testing compares the hypothesis that an effect $\mu$ is large against the alternative that it is negligible.
    Here, `large' is classically expressed as being larger than some `equivalence margin' $\Delta$.
    A longstanding problem is that this margin must be specified but can rarely be objectively justified in practice.
    We lay the foundation for an alternative paradigm, arguing to instead report a data-dependent margin $\widehat{\Delta}_\alpha$ that bounds the true effect $\mu$ with probability $1 - \alpha$.
    Our key argument is that $\widehat{\Delta}_\alpha$ is more useful than a test outcome at a fixed margin $\Delta$, as measured by the guarantees it offers to decision makers.
    We generalize this to a curve of margins $\alpha \mapsto \widehat{\Delta}_\alpha$, uniformly valid under the post-hoc selection of the margin.
    % As a side-contribution, we show this spectrum may be easily aggregated across datasets and specifications.
    These ideas rely on e-values, which we derive for models that are strictly totally positive of order 3, nesting the classical z-test and t-test settings.
\end{abstract}

% \begin{abstract}
%     Equivalence testing concerns testing the null hypothesis that an effect is meaningfully large against the alternative that it is (near) zero.
%     Here, `meaningfully large' is expressed as the effect being larger than some  margin.
%     A longstanding problem is that this margin must be chosen in advance but is hard to objectively justify in most applications.
%     We provide a solution to this problem by studying equivalence testing under the post-hoc selection of the equivalence margin.

%     E-values are a continuous measures of evidence that generalize classical binary Neyman-Pearson tests, which may only reject-or-not.
%     Our solution is to report the e-value as evidence against each possible value of the margin.
%     We show that this may be interpreted as reporting a `fuzzy' generalization of a confidence set for the true effect.
%     These confidence sets may be easily aggregated across datasets across datasets and specifications, and may be sequentially updated with new data.
%     We base these fuzzy confidence sets on expected-utility-optimal e-values, which we derive for models that are totally positive of order 3, nesting the classical z-test and t-test settings.
% \end{abstract}

\newpage

\section{Introduction}
    % What is equivalence testing
    % The Equivalence Problem concerns establishing whether an effect size $\mu \geq 0$ (or $|\mu|$) is sufficiently small to be deemed `practically negligible'
    Equivalence assessment concerns the problem of establishing whether an effect size $\mu \geq 0$ (or $|\mu|$) is sufficiently small to be deemed practically negligible.
    This problem arises across many scientific disciplines, including clinical trials and bioequivalence assessment \citep{schuirmann1987comparison, berger1996bioequivalence},
        psychology \citep{lakens2018equivalence},
        political science \citep{hartman2018equivalence},
        economics \citep{dette2024testing},
        and ecological sciences \citep{robinson2004model}.

    % How does classical equivalence testing work?
    Equivalence assessment is almost universally cast into a hypothesis testing problem.
    Here, the analyst tests the null hypothesis $H_0^\Delta : \mu \geq \Delta$ that the effect $\mu$ is larger than some specified margin $\Delta$, against the alternative $H_1^\Delta : \mu < \Delta$ that it is smaller.
    By specifying $\Delta$ to be sufficiently small so that effect sizes $\mu < \Delta$ can be viewed as (practically) equivalent to zero, this results in the level $\alpha > 0$ Type-I error guarantee
    \begin{align}\label{ineq:type-I}
        \Pr(\textnormal{falsely claim equivalence})
            \equiv \Pr(\textnormal{falsely reject } \mu \geq \Delta)
            \leq \alpha.
    \end{align}

    % Problem with \Delta
    A longstanding problem with this approach is that the equivalence margin $\Delta$ is often difficult to justify in practice.
    At the same time, the formal Type-I error guarantee \eqref{ineq:type-I} is inherently tied to the choice of $\Delta$ through the null hypothesis $H_0 : \mu \geq \Delta$: it only certifies a claim of equivalence \emph{relative to the specified margin $\Delta$}.
    This creates an awkward tension: to obtain a formal guarantee one must commit to a margin $\Delta$ that may be hard to defend.
    
    % Is this the right approach?
    This tension invites a natural question: if the margin $\Delta$ is hard to meaningfully specify, is the Type-I error guarantee \eqref{ineq:type-I} relative to such a margin $\Delta$ actually what we want?
    That is, should equivalence assessment even be cast into a testing problem?
    
    \subsection{Reporting data-dependent equivalence margins}
        % We believe that in many equivalence assessment applications, the underlying goal is simply to provide evidence against $\mu$ being `large'.
        % By casting this into a hypothesis testing
        % By casting this problem into a hypothesis testing 
        % Casting this problem into a hypothesis testing problem makes it impossible to achieve this goal.

        The main point of this paper is that equivalence assessment should \emph{not} be cast into a testing problem.
        As a starting point, we instead suggest reporting a \emph{data-dependent equivalence margin} $\widehat{\Delta}_\alpha$ that satisfies the guarantee
        \begin{align}\label{ineq:guarantee}
            \Pr(\mu < \widehat{\Delta}_\alpha) \geq 1 - \alpha,
        \end{align}
        for a data-independent $\alpha > 0$.
        In words, $\widehat{\Delta}_\alpha$ may be viewed as a statistically certified upper bound on $\mu$, or as an upper confidence limit of a level $1 - \alpha$ confidence set $[0, \widehat{\Delta}_\alpha)$ for $\mu$.

        % While we are not the first to observe the possibility of reporting $\widehat{\Delta}_\alpha$ (Westlake, Seaman \& Serlin, Meyners), this has always been presented as an exploratory tool, `inferior' to a test outcome for a specified $\Delta$.
        % We explicitly argue that $\widehat{\Delta}_\alpha$ should replace the reporting of such a test outcome.

        To motivate reporting $\widehat{\Delta}_\alpha$, we argue that it is \emph{more useful} than a test outcome, as measured by the guarantees it offers to decision makers.
        In particular, suppose that the analyst hands $\widehat{\Delta}_\alpha$ to a decision maker facing a decision $d \in \mathcal{D}$ under a loss function $L_\mu$ that is non-decreasing in the unknown parameter $\mu$.
        Then, we show that the decision maker can use $\widehat{\Delta}_\alpha$ to translate the guarantee \eqref{ineq:guarantee} into the loss bound
        \begin{align}\label{ineq:P_loss_bound}
            \Pr\left(L_\mu(\widehat{d}) \leq L_{\widehat{\Delta}_\alpha}(\widehat{d})\right) \geq 1 - \alpha,
        \end{align}
        for \emph{every data-dependent decision} $\widehat{d}$.
        % The tightest bound is attained by the decision that minimizes the worst-case loss $\widehat{d}_\alpha^{\textnormal{MM}} \in \argmin_{d \in \mathcal{D}} L_{\widehat{\Delta}_\alpha}(d)$.

        Within this framework, we find that reporting a fixed-$\Delta$ test outcome is appropriate if the decision maker faces a loss $L_\mu$ that depends only on whether $\mu < \Delta$ or $\mu \geq \Delta$.
        However, the fact that $\Delta$ is hard to specify in practice suggests that practical decisions rarely hinge on a single fixed $\Delta$.
        As a consequence, we argue that a fixed-$\Delta$ test is generally not the appropriate methodology for equivalence assessment.
        
        We formalize this discussion in Section \ref{sec:data-dependent_margins} and Section \ref{sec:evidence-certified_decisions}.
        There, we show that a data-dependent margin $\widehat{\Delta}_\alpha$ is characterized by a specific \emph{collection} of tests.
        Moreover, we formally derive the loss bound \eqref{ineq:P_loss_bound}.
        Loss bounds such as \eqref{ineq:P_loss_bound} were recently studied by \citet{andrews2025certified} for classical confidence sets and by \citet{kiyani2025decision} for conformal prediction.
        
    \subsection{Choosing the equivalence margin post-hoc}
        We distinguish the use of a data-dependent margin $\widehat{\Delta}_\alpha$ from the \emph{post-hoc} selection of the margin.
        To explain this, note that it may be tempting to compute an entire \emph{equivalence curve} $\alpha \mapsto \widehat{\Delta}_\alpha$ of data-dependent equivalence margins and browse the curve to select the desired certificate, post-hoc.
        Unfortunately, this would break the guarantee \eqref{ineq:guarantee}, since this makes $\alpha$ data-dependent and \eqref{ineq:guarantee} only holds for \emph{data-independent} choices of $\alpha$.

        To resolve this, we consider recent innovations in the data-dependent and post-hoc selection of $\alpha$ \citep{grunwald2024beyond, koning2023post}.
        Using the approach of \citet{koning2023post}, we show how to obtain a \emph{uniformly valid} equivalence curve $\alpha \mapsto \widehat{\Delta}_\alpha$ that satisfies
        \begin{align}\label{ineq:post-hoc_margin}
            \E_{\widetilde{\alpha}}\left[\frac{\Pr(\mu \geq \widehat{\Delta}_{\widetilde{\alpha}} \mid \widetilde{\alpha})}{\widetilde{\alpha}}\right] \leq 1,
        \end{align}
        for \emph{every data-dependent level $\widetilde{\alpha}$}.
        This guarantee may be interpreted as rewriting \eqref{ineq:guarantee} as $\Pr(\mu \geq \widehat{\Delta}_\alpha) / \alpha \leq 1$ and upgrading from data-independent $\alpha$ to `in expectation over every data-dependent $\widetilde{\alpha}$' (\emph{not} conditionally on $\widetilde{\alpha} = \alpha$).

        As \eqref{ineq:post-hoc_margin} holds for arbitrarily data-dependent levels $\widetilde{\alpha}$, we may truly browse the entire equivalence curve $\alpha \mapsto \widehat{\Delta}_\alpha$ and select the desired equivalence margin $\widehat{\Delta}_{\widetilde{\alpha}}$, post-hoc.
        We display such a post-hoc valid equivalence curve in the left panel of Figure \ref{fig:marginplot}. 
        There, we also show a data-dependent margin $\widehat{\Delta}_\alpha$ at the fixed level $\alpha = 0.05$, which may be viewed as a special uniformly valid equivalence curve for which $\widehat{\Delta}_{\widetilde{\alpha}} = \infty$ for $\widetilde{\alpha} < 0.05$ and $\widehat{\Delta}_{\widetilde{\alpha}} = \widehat{\Delta}_\alpha$ for $\widetilde{\alpha} \geq \alpha$.
        We see that the fixed-$\alpha$ margin is tailored to $\alpha = 0.05$ while the smooth equivalence curve provides a balanced range of margins.
        
        We formalize this discussion in Section \ref{sec:e-values_post-hoc}.
        Moreover, in Section \ref{sec:evidence-certified_decisions} we use \eqref{ineq:post-hoc_margin} to generalize the loss bound \eqref{ineq:P_loss_bound} to post-hoc loss bounds, providing much more flexibility to decision makers, inspired by recent work of \citet{koning2025fuzzy} and \citet{grunwald2023posterior}.
        
        \begin{figure}[t]
            \centering
            \includegraphics[width = 0.47\linewidth]{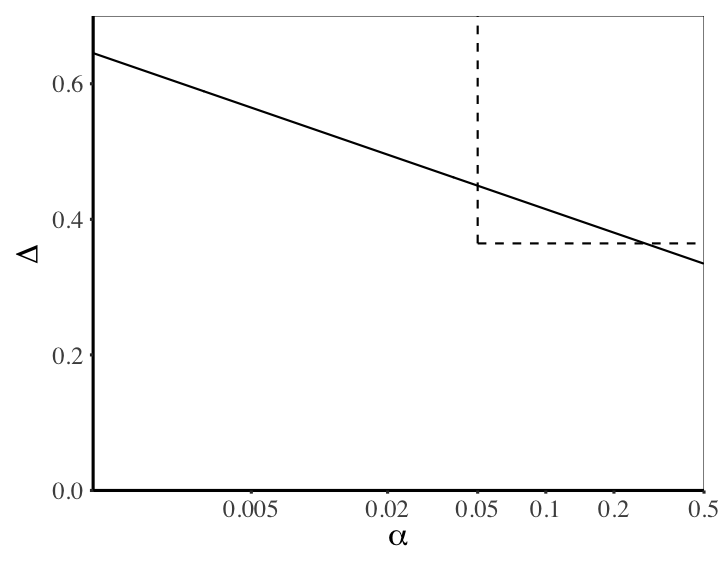}
            \includegraphics[width = 0.47\linewidth]{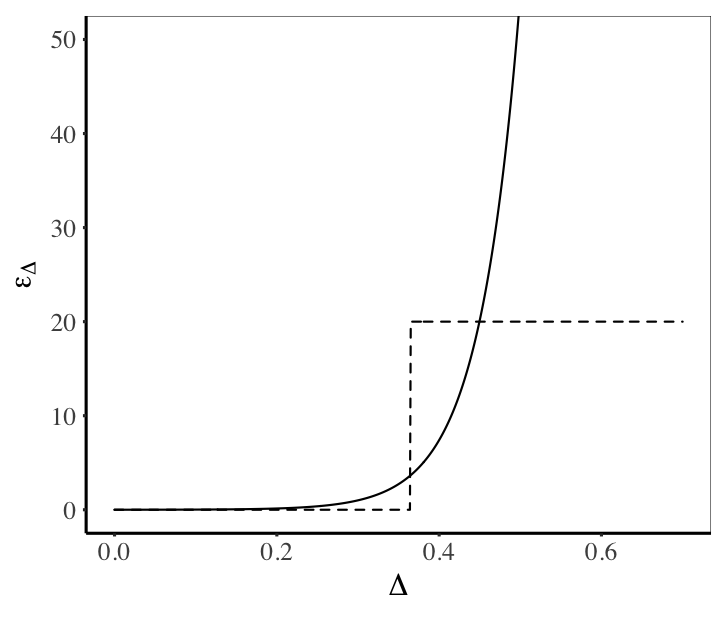}
            \caption{The left panel shows two uniformly valid equivalence curves $\alpha \mapsto \widehat{\Delta}_\alpha$, where the dashed line corresponds to a fixed-$\alpha$ data-dependent margin. The right panel re-expresses these as curves of e-values $\Delta \mapsto \e_\Delta$.} 
            \label{fig:marginplot}
        \end{figure}

    \subsection{Connection to e-values and merging}
        Under the hood, the data-dependent margin $\widehat{\Delta}_\alpha$ is derived by inverting a particular curve of classical equivalence tests $\Delta \mapsto \phi_\Delta$.
        To obtain uniformly valid equivalence curves $\alpha \mapsto \widehat{\Delta}_\alpha$, we generalize test inversion to inverting a curve of \emph{e-values} $\Delta \mapsto \e_\Delta$ for equivalence.
        The e-value is a recently introduced measure of evidence, and may be viewed as a continuous generalization of a classical hypothesis test \citep{howard2021time, shafer2021testing, vovk2021values, ramdas2023game, grunwald2024safe, koning2024continuous, ramdas2024hypothesis}.
        We illustrate this e-value inversion in Figure \ref{fig:marginplot}, where the right panel expresses the curves in the left panel as curves of e-values $\Delta \mapsto \e_\Delta$.
        There, the dashed line corresponds to a curve of tests $\Delta \mapsto \phi_\Delta$, which are $\{0, 1/\alpha\}$-valued e-values.
        The curve $\Delta \mapsto \phi_\Delta$ switches from $0$ to $1 / \alpha$ at $\widehat{\Delta}_\alpha$.

        A remarkable consequence is that we can easily merge equivalence curves $\alpha \mapsto \widehat{\Delta}_\alpha$ across studies (under independence) and across specifications (under arbitrary dependence) by translating them into their curve of e-values $\Delta \mapsto \e_\Delta$.
        Indeed, the product of two \emph{independent} curves of e-values $\Delta \mapsto \e_\Delta^1 \times \e_\Delta^2$ is also a valid curve of e-values. 
        Moreover, the weighted average of two \emph{arbitrarily dependent} curves of e-values $\Delta \mapsto w \e_\Delta^1 + (1-w) \e_\Delta^2$, $w \in [0, 1]$, remains a valid curve of e-values.
        We formally treat these merging properties in Section \ref{sec:merging}.
        
    \subsection{Technical contributions}
        Using the bridge between equivalence curves and e-values, we derive good equivalence curves by deriving optimal e-values for equivalence assessment.
        This constitutes our main technical contribution.

        For the optimality, we restrict ourselves to models that are strictly totally positive of order 3 (STP$_3$), which nests exponential families, the $z$-test, $t$-test and more.
        Under this assumption, we show that the log-utility-optimal e-value \citep{grunwald2024safe, larsson2025numeraire} has a closed-form boundary-mixture likelihood ratio.
        We recover a similar structure when generalizing to general utility optimal e-values, which nests classically power-optimal tests \citep{lehmann1959testing, romano2005} for the ``Neyman--Pearson utility function'' $U_\alpha(x) = x \wedge 1/\alpha$ \citep{koning2024continuous}.

        We also introduce the \emph{TOST-E}: the e-value generalization of the classical Two One-Sided Tests procedure \citep{schuirmann1981hypothesis}.
        We show it is valid under the weaker monotone likelihood ratio condition (STP$_2$).
        We also consider a (generalized) Universal Inference variant \citep{wasserman2020universal}, which is valid under no assumptions.
        We apply this theory to the $z$-test and $t$-test in Section \ref{sec:examples_z_t}.

        The optimality is formally derived in Section \ref{sec:optimalSTP3}, and the TOST-E and Universal Inference variants in Section \ref{sec:TOST-E}.
        Finally, in Section \ref{sec:seqtesting}, we develop anytime-valid sequential extensions of these constructions.
        
    \subsection{Relationship to literature}\label{sec:related}
        We provide a short timeline of how equivalence assessment was originally cast into a testing problem.
        Moreover, we relate our work to several papers that caution against reporting a data-dependent margin $\widehat{\Delta}_\alpha$, and present our counterarguments.
        
        To the best of our knowledge, testing equivalence with a fixed $\Delta$ was first considered in Chapter 3.7 of \citet{lehmann1959testing}.
        The general problem of assessing equivalence was studied by \citet{westlake1976symmetrical, westlake1979statistical} who effectively proposed reporting $\widehat{\Delta}_\alpha$.
        In an attempt to propose a `formal' methodology, \citet{dunnett1977significance} and \citet{blackwelder1982proving} cast this into the fixed-$\Delta$ testing framework of \citet{lehmann1959testing}.
        This fixed-$\Delta$ framework was subsequently consolidated by regulatory bodies for clinical trials, outlining that $\Delta$ should be prespecified in a transparent and conservative manner, based on ``[...] both statistical reasoning and clinical judgement [...]'' \citep{ICH_E9_1998, ICH_E10_2000}.
        A large body of literature has subsequently focused on how one should motivate $\Delta$ \citep{wiens2002choosing, lange2005choice, fitzgerald_2025}.
        Based on recent draft regulations of the \citet{EMA2025DraftNonInferiority}, the fixed-$\Delta$ testing approach remains the standard to this day.

        Following the original proposal by \citet{westlake1976symmetrical}, several other papers have discussed reporting a data-dependent margin $\widehat{\Delta}_\alpha$, based on inverting various tests.
        In particular, \citet{hauck1986proposal} propose overcoming the prespecification of $\Delta$, by plotting a (not uniformly valid) equivalence curve $\alpha \mapsto \widehat{\Delta}_\alpha$, as a ``useful way to present [...] the degree of certainty regarding potential differences''.
        Moreover, both \citet{seaman1998equivalence} and \citet{meyners2007least} propose reporting a variant of $\widehat{\Delta}_\alpha$ based on inverting the TOST of \citet{schuirmann1981hypothesis}.
        We stress that these works explicitly only view $\widehat{\Delta}_\alpha$ as an informal, exploratory, descriptive or diagnostic tool, and not as a replacement of the fixed-$\Delta$ testing methodology.

        Key arguments against viewing $\widehat{\Delta}_\alpha$ as a serious alternative to the fixed-$\Delta$ approach are presented in \citet{ng2003issues} and \citet{campbell2021make}.
        Both arguments rely on casting equivalence assessment back into a binary testing problem, showing that no meaningful Type-I error control can be achieved for the data-dependent hypothesis $H_{\widehat{\Delta}_\alpha}$.
        Indeed, \citet{campbell2021make} observe that $H_{\widehat{\Delta}_\alpha}$ is rejected by construction, leading to the conclusion that reporting $\widehat{\Delta}_\alpha$ ``[...] lacks the formalism of equivalence testing [...]''.
        \citet{ng2003issues} similarly argues that the data-dependent hypothesis $H_{\widehat{\Delta}_\alpha}$ would be rejected in an identical repeated experiment with probability 1/2, concluding that reporting $\widehat{\Delta}_\alpha$ is ``[...] exploratory and [...] unacceptable for confirmatory testing [...]''.

        Our main counterargument bypasses this discussion: we believe it is not relevant whether we can obtain a Type-I error control on rejecting $H_{\widehat{\Delta}_\alpha}$.
        Instead, we believe that one should simply compare statistical procedures by how useful they are in subsequent decision making.
        There, we argue that the guarantee \eqref{ineq:guarantee} on $\widehat{\Delta}_\alpha$ is more useful and no less `formal' than a Type-I error guarantee for a fixed $\Delta$.
        
        % Our main counterargument bypasses this discussion entirely: we believe that statistical procedures should not be judged by a perception of `formalism', but instead by how useful they are in subsequent decision making.
        % Here, we show that reporting the data-dependent margin $\widehat{\Delta}_\alpha$ or its post-hoc valid generalization comes out on top.
        % Moreover, even if we were to dive into a discussion of formalism: we see no reason to view the guarantee \eqref{ineq:guarantee} as any less formal than \eqref{ineq:type-I}.
        
%     \subsection{Outline}
    
% The remainder of the paper is organized as follows.
% Section~2 introduces the e-value formulation of equivalence assessment and develops test/e-value inversion for obtaining data-dependent, post-hoc-valid margin curves.
% Building on this inferential layer, Section~3 translates margin guarantees into loss guarantees and certified (minimax) decisions, including post-hoc decision rules.
% Section~4 then derives optimal e-values under STP$_3$, covering both log-optimal (numeraire / GRO) and general utility-optimal targets.
% Section~5 presents the TOST-E benchmark under STP$_2$ and its comparison with the optimal construction.
% Section~6 illustrates the framework in the canonical Gaussian settings (symmetric $z$-test and $t$-test/effect-size setting).
% Section~7 extends the theory to sequential, anytime-valid procedures.
% Section~8 gives a practical decision-theoretic example, and Section~9 concludes with a discussion.

    \section{Background: from tests to e-values}\label{sec:ebackground}
        We denote a sample space by $\cX$, which we equip with a model $\mathcal{P}$: a collection of probabilities on $\cX$.
        A hypothesis $H \subseteq \mathcal{P}$ is a subset of our model.
        The intention of hypothesis testing is to obtain evidence against a hypothesis $H$.
        
        Without loss of generality, we follow \citet{koning2024continuous} in defining a hypothesis test $\phi^\alpha$ as a $\{0, 1/\alpha\}$-valued map.
        Here, $\phi^\alpha = 0$ is interpreted as a non-rejection and $\phi^\alpha = 1/\alpha$ as a rejection at level $\alpha > 0$.
        This may be interpreted as emitting either no evidence (0) or a particular amount of evidence ($1 / \alpha$) against a hypothesis.
        \begin{dfn}[Test]
            A level-$\alpha$ test is a measurable map $\phi^\alpha : \cX \to \{0, 1/\alpha\}$.
            A test $\phi^\alpha$ is valid for hypothesis $H$ if $ \E^{P}[\phi^\alpha] \equiv P(\phi^\alpha = 1/\alpha) / \alpha \leq 1, \textnormal{ for every } P \in H$.
        \end{dfn}

        The \emph{e-value} can be viewed as a `multi-significance level' generalization of a test.
        Indeed, the e-value extends the binary codomain $\{0, 1/\alpha\}$ of a test to the richer $\{0, 1/\alpha_1, 1/\alpha_2, \dots\}$, $\alpha_1, \alpha_2, \dots > 0$, or even $[0, \infty]$.
        This means an e-value may return various amounts of evidence against the hypothesis.
        In particular, the realization of an e-value $\e$ may be interpreted as a rejection at level $1/\e$ under a generalized Type-I error guarantee \citep{koning2023post}. 
        \begin{dfn}[E-value]
            An e-value is a measurable map $\e : \cX \to [0, \infty]$.
            An e-value $\e$ is valid for hypothesis $H$ if $\E^{P}[\e] \leq 1, \textnormal{ for every } P \in H$.
        \end{dfn}

    \section{Equivalence assessment}\label{sec:data-dependent_margins}
        In this section, we derive the equivalence between data-dependent margins $\widehat{\Delta}_\alpha$ and particular curves of tests $\Delta \mapsto \phi_\Delta$, and generalize this to uniformly valid equivalence curves $\alpha \mapsto \widehat{\Delta}_\alpha$ and particular curves of e-values $\Delta \mapsto \e_\Delta$.

        For now, we focus on a single margin $\widehat{\Delta}_\alpha$ instead of the pair of margins $(\widehat{\Delta}_\alpha^-, \widehat{\Delta}_\alpha^+)$ typical in equivalence testing, because such a pair introduces considerable notational overhead without adding conceptual insight.
        The single-margin setting is also known in the literature as assessing `non-inferiority' \citep{wellek2010equivalence}.
        We cover margin pairs in Section \ref{sec:twomargins}.

        \subsection{Model and hypotheses}
            To formally set up assessing equivalence, we consider a model $(P_\mu)_{\mu \geq 0}$ indexed by a parameter $\mu$.
            We use $\mu^*$ to denote the parameter associated to the `true' data generating process $P_{\mu^*}$.
            We consider a collection of hypotheses $(H_0^\Delta)_{\Delta \geq 0}$ of the form $H_0^\Delta = \{P_{\mu} : \mu \geq \Delta\}$.
            Here, the hypothesis $H_0^\Delta$ may be interpreted as the statement that $\mu^*$ is at least $\Delta$.

            By the structure of the problem, the index $\Delta^*$ of the smallest true hypothesis $H_0^{\Delta^*}$ coincides with $\mu^*$, since $\Delta^* = \inf\{\Delta \in [0, \infty) : \mu^* \leq \Delta\} = \mu^*$.
            To avoid causing confusion by frequently switching back and forth between $\Delta$ and $\mu$, we use this identity to formulate the problem as obtaining a bound on $\Delta^*$.

        \subsection{Data-dependent margins}
            In theory, we would desire a probabilistic data-dependent bound $\widehat{\Delta}_\alpha$ on $\Delta^*$ that is valid under the smallest true hypothesis: $\sup_{\mu \geq \Delta^*} P_\mu(\Delta^* \geq \widehat{\Delta}_\alpha) \leq \alpha$.
            However, as the true value $\Delta^*$ is unknown, and we do not wish to take a stance on its value, we achieve this by requiring this inequality to hold for \emph{every} margin $\Delta$.
            This leads us to Definition \ref{dfn:valid_dd_margin}.

            \begin{dfn}[Valid data-dependent margin]\label{dfn:valid_dd_margin}
                A margin $\widehat{\Delta}_\alpha$ is valid at level $\alpha > 0$ if
                \begin{align}\label{ineq:guarantee_formal}
                    \sup_{\Delta \geq 0} P_\Delta(\Delta \geq \widehat{\Delta}_\alpha) \leq \alpha.
                \end{align}
            \end{dfn}

            \begin{rmk}
                The guarantee \eqref{ineq:guarantee_formal} is equivalent to $\sup_{\Delta \geq 0} \sup_{\mu \geq \Delta} P_\mu(\Delta \geq \widehat{\Delta}_\alpha)
                        \leq \alpha$.%, which may be interpreted as $\sup_{H_0^\Delta \ni \mu} \sup_{P \in H_0^\Delta} P(H_0^\Delta \subseteq H_0^{\widehat{\Delta}_\alpha}) \leq \alpha$, for every $\mu \geq 0$.
            \end{rmk}

            \begin{rmk}\label{rmk:confidence_set}
                The data-dependent margin $\widehat{\Delta}_\alpha$ may be interpreted as the upper bound of a confidence set $[0, \widehat{\Delta}_\alpha)$ for $\Delta^*$. 
            \end{rmk}

        \subsection{Data-dependent margins and test inversion}\label{sec:test-inversion}
            In Proposition \ref{prp:Delta_test_equivalence}, we show that $\widehat{\Delta}_\alpha$ is uniquely characterized by a non-decreasing curve $\Delta \mapsto \phi_\Delta^\alpha$ of equivalence tests for $H_0^\Delta$.
            This means that $\widehat{\Delta}_\alpha$ and $\Delta \mapsto \phi_\Delta^\alpha$ can be viewed as two different representations of the same information.
            Moreover, it implies that to construct $\widehat{\Delta}_\alpha$, we can reduce to the familiar task of constructing an equivalence test $\phi_\Delta^\alpha$, for every hypothesis $H_0^\Delta$.
            The proof of Proposition \ref{prp:Delta_test_equivalence} is found in Appendix \ref{proof:Delta_test_equivalence}, which also carefully handles the measurability of $\widehat{\Delta}_\alpha$.
            
            The precise correspondence between the data-dependent margin and curve of tests is that the margin $\widehat{\Delta}_\alpha$ induces the tests $\phi_\Delta^\alpha = \mathbb{I}\{\Delta \geq \widehat{\Delta}_\alpha\} / \alpha$.
            Conversely, the data-dependent margin can be retrieved by inverting a curve of tests $\Delta \mapsto \phi_\Delta^\alpha$ through $\widehat{\Delta}_\alpha = \inf\{\Delta : \phi_\Delta^\alpha = 1/\alpha\}$.
            
            \begin{prp}\label{prp:Delta_test_equivalence}
                A data-dependent margin $\widehat{\Delta}_\alpha$ corresponds to a non-decreasing right-continuous curve $\Delta \mapsto \phi_\Delta^\alpha$ of tests.
                $\widehat{\Delta}_\alpha$ is valid if and only if $\phi^{\alpha}_\Delta$ is valid for $H_0^\Delta$ for every $\Delta \geq 0$.
            \end{prp}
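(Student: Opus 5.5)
The plan has two parts: first set up a bijection between data-dependent margins and curves of tests, then show that validity transfers term by term.

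\emph{Step 1: from margin to curve.} Given a measurable $\widehat{\Delta}_\alpha : \cX \to [0,\infty]$, define
\begin{align*}
\phi_\Delta^\alpha = \mathbb{I}\{\Delta \geq \widehat{\Delta}_\alpha\}/\alpha .
\end{align*}
For each fixed $\Delta$ this is measurable and $\{0,1/\alpha\}$-valued, so it is a test. For each fixed data point, $\Delta \mapsto \phi^\alpha_\Delta$ is the indicator of the closed ray $[\widehat{\Delta}_\alpha, \infty)$. It is therefore non-decreasing and right-continuous.

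\emph{Step 2: from curve to margin.} Given a non-decreasing right-continuous curve of tests, set
\begin{align*}
\widehat{\Delta}_\alpha = \inf\{\Delta \geq 0 : \phi_\Delta^\alpha = 1/\alpha\},
\end{align*}
with $\inf \emptyset = \infty$. Monotonicity makes $\{\Delta : \phi^\alpha_\Delta = 1/\alpha\}$ an up-ray. Right-continuity of a $\{0,1/\alpha\}$-valued non-decreasing function makes this ray closed, i.e.\ equal to $[\widehat{\Delta}_\alpha, \infty)$. Hence $\phi^\alpha_\Delta = \mathbb{I}\{\Delta \geq \widehat{\Delta}_\alpha\}/\alpha$, and the two constructions are mutually inverse.

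The only delicate point, which I expect to be the main obstacle, is measurability of $\widehat{\Delta}_\alpha$ in Step 2. The event $\{\widehat{\Delta}_\alpha \leq t\}$ equals $\{\phi^\alpha_t = 1/\alpha\}$. This identity follows from the closed-ray description, and the right-hand event is measurable because $\phi^\alpha_t$ is a test. Sets of the form $\{\widehat{\Delta}_\alpha \le t\}$ generate the Borel $\sigma$-algebra on $[0,\infty]$, so $\widehat{\Delta}_\alpha$ is measurable. This uses only countably many $t$ implicitly, and needs no joint measurability in $(\Delta, x)$.

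\emph{Step 3: validity.} For each $\Delta$,
\begin{align*}
P_\Delta(\Delta \geq \widehat{\Delta}_\alpha) = \alpha\, \E^{P_\Delta}[\phi^\alpha_\Delta].
\end{align*}
Monotonicity in the curve gives $\phi^\alpha_\Delta \le \phi^\alpha_\mu$ pointwise for $\mu \geq \Delta$. It is not immediate that validity of $\phi^\alpha_\Delta$ on all of $H_0^\Delta = \{P_\mu : \mu \geq \Delta\}$ follows from validity at $P_\Delta$ alone, so I would use the remark after Definition~\ref{dfn:valid_dd_margin}. That remark states that \eqref{ineq:guarantee_formal} is equivalent to
\begin{align*}
\sup_{\Delta \ge 0}\sup_{\mu \ge \Delta} P_\mu(\Delta \geq \widehat{\Delta}_\alpha) \le \alpha .
\end{align*}
Assuming it, the condition reads $\sup_{\Delta}\sup_{P \in H_0^\Delta} \E^P[\phi^\alpha_\Delta] \le 1$, which is exactly validity of every $\phi^\alpha_\Delta$ for $H_0^\Delta$.

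The forward direction of that remark also follows directly, with no monotonicity in $\mu$. Fix $\mu \geq \Delta$. Since $\{\Delta \geq \widehat{\Delta}_\alpha\} \subseteq \{\mu \geq \widehat{\Delta}_\alpha\}$, we get
\begin{align*}
P_\mu(\Delta \geq \widehat{\Delta}_\alpha) \le P_\mu(\mu \ge \widehat{\Delta}_\alpha) \le \alpha .
\end{align*}
The converse is trivial by taking $\mu = \Delta$. This completes both directions.
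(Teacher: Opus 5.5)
Your proof is correct and matches the paper's: the same two constructions $\phi^\alpha_\Delta = \mathbb{I}\{\Delta \geq \widehat{\Delta}_\alpha\}/\alpha$ and $\widehat{\Delta}_\alpha = \inf\{\Delta : \phi^\alpha_\Delta = 1/\alpha\}$, with validity transferred through $\Delta \geq \widehat{\Delta}_\alpha \iff \phi^\alpha_\Delta = 1/\alpha$. Your set-inclusion argument spells out the step the paper calls immediate. The only minor difference is measurability: you use right-continuity to get $\{\widehat{\Delta}_\alpha \le t\} = \{\phi^\alpha_t = 1/\alpha\}$ directly, whereas the paper invokes Lemma~\ref{lem:invert_threshold}, a countable union over rationals that needs only monotonicity; both arguments work.
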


            \begin{rmk}[Non-decreasing in $\Delta$]\label{rmk:non-decreasing}
                Inverting a collection of tests that is not non-decreasing in $\Delta$ may result in a confidence set that is not an interval of the form $[0, \widehat{\Delta}_\alpha)$.
                While there is nothing fundamentally wrong with such a confidence set for $\Delta^*$, it is harder to interpret, because it assigns more evidence against smaller values of $\Delta$ (larger hypotheses) than against larger values of $\Delta$ (smaller hypotheses).
                
                In Section \ref{sec:optimalSTP3}, we find that optimal tests are generally non-decreasing in $\Delta$.
                Moreover, we can convert any family of tests into a non-decreasing family by taking its right-lower envelope $\underline{\phi}_\Delta^\alpha(x) := \inf_{\Delta' \geq \Delta} \phi^\alpha_{\Delta'}(x)$.
            \end{rmk}

        \subsection{Uniform validity, post-hoc margins and e-value inversion}\label{sec:e-values_post-hoc}
            We now generalize beyond tests to e-values to construct uniformly valid equivalence curves $\alpha \mapsto \widehat{\Delta}_\alpha$.
            In particular, we consider a non-decreasing collection $\Delta \mapsto \e_\Delta$ of e-values $\e_\Delta$ valid for $H_0^\Delta$.
            We then generalize test inversion to e-value inversion by constructing a non-increasing equivalence curve through
            \begin{align*}
                \widehat{\Delta}_\alpha
                    = \inf\{\Delta : \e_\Delta \geq 1/\alpha\}.
            \end{align*}

            The key contribution here is that this does not merely result in a curve of \emph{individually valid} margins $\widehat{\Delta}_\alpha$.
            Instead, the curve $\alpha \mapsto \widehat{\Delta}_\alpha$ is \emph{uniformly valid}, as defined in Definition \ref{dfn:post-hoc_valid}.
            This result is presented in Theorem \ref{thm:post-hoc_valid}, which is proven in Appendix \ref{proof:post-hoc_valid}.

            The consequence of uniform validity is that we may interpret the entire equivalence curve $\alpha \mapsto \widehat{\Delta}_\alpha$ as a whole.
            For example, it enables the \emph{post-hoc selection} of the margin: we may browse the curve $\alpha \mapsto \widehat{\Delta}_\alpha$ and select the data-dependent level $\widetilde{\alpha}$ that comes with the desired value of $\widehat{\Delta}_{\widetilde{\alpha}}$.
            
            \begin{dfn}\label{dfn:post-hoc_valid}
                We say that the equivalence curve $\alpha \mapsto \widehat{\Delta}_\alpha$ is uniformly valid if
                \begin{align*}
                    \sup_{\Delta \geq 0} \E_{\widetilde{\alpha}}^{P_\Delta}\left[\frac{P_\Delta(\Delta \geq \widehat{\Delta}_{\widetilde{\alpha}} \mid \widetilde{\alpha})}{\widetilde{\alpha}}\right]
                        \leq 1,
                \end{align*}
                 for every data-dependent choice of the level $\widetilde{\alpha} > 0$.
            \end{dfn}

            \begin{thm}\label{thm:post-hoc_valid}
                A non-increasing and right-continuous equivalence curve $\alpha \mapsto \widehat{\Delta}_\alpha$ corresponds to a non-decreasing and right-continuous collection of e-values $\Delta \mapsto \e_\Delta$.
                The equivalence curve is uniformly valid if and only if every e-value $\e_\Delta$ is valid for $H_0^\Delta$.
            \end{thm}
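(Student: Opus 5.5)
The proof has two parts. First we set up the correspondence between curves and e-values, which is deterministic and pointwise in $x \in \cX$. Then we prove the validity equivalence, reducing it to the post-hoc validity characterization of \citet{koning2023post}.

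\textbf{Correspondence.} Given a non-decreasing, right-continuous curve $\Delta \mapsto \e_\Delta$, we set $\widehat{\Delta}_\alpha = \inf\{\Delta : \e_\Delta \geq 1/\alpha\}$. Conversely, given a non-increasing, right-continuous curve $\alpha \mapsto \widehat{\Delta}_\alpha$, we set
\begin{align*}
\e_\Delta = \sup\{1/\alpha : \widehat{\Delta}_\alpha \leq \Delta\}, \qquad \sup \emptyset = 0.
\end{align*}
The key step is the Galois-type identity
\begin{align*}
\{\Delta \geq \widehat{\Delta}_\alpha\} = \{\e_\Delta \geq 1/\alpha\}.
\end{align*}
To prove it:
\begin{itemize}
\item Right-continuity of $\Delta \mapsto \e_\Delta$ gives $\e_{\widehat{\Delta}_\alpha} \geq 1/\alpha$, since the infimum is attained. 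Together with monotonicity this yields the identity in one direction.
\item Right-continuity of $\alpha \mapsto \widehat{\Delta}_\alpha$ ensures that the supremum defining $\e_\Delta$ is attained. This gives the other direction.
\item Monotonicity of each object follows directly from the definitions.
\item The two maps are mutually inverse, since each object is determined by the family of sets $\{\Delta \geq \widehat{\Delta}_\alpha\}$ indexed by $(\alpha,\Delta)$. This is the same argument as in Proposition~\ref{prp:Delta_test_equivalence}, applied level by level.
\end{itemize}
Measurability is handled as in Appendix~\ref{proof:Delta_test_equivalence}. By monotonicity, $\{\e_\Delta \geq c\}$ can be written through countable operations over rational $\alpha$.

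\textbf{Validity.} Fix $\Delta$ and write $P = P_\Delta$. The identity gives
\begin{align*}
P(\Delta \geq \widehat{\Delta}_{\widetilde{\alpha}} \mid \widetilde{\alpha}) = P(\e_\Delta \widetilde{\alpha} \geq 1 \mid \widetilde{\alpha}),
\end{align*}
so the quantity in Definition~\ref{dfn:post-hoc_valid} equals $\E^P[\mathbb{I}\{\e_\Delta \geq 1/\widetilde{\alpha}\}/\widetilde{\alpha}]$.

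\emph{If.} Pointwise, $\mathbb{I}\{\e_\Delta \geq 1/\widetilde{\alpha}\}/\widetilde{\alpha} \leq \e_\Delta$. Taking expectations bounds the quantity by $\E^{P}[\e_\Delta] \leq 1$, for every data-dependent $\widetilde{\alpha}$.

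\emph{Only if.} Choose the data-dependent level $\widetilde{\alpha} = 1/\e_\Delta$, restricted to where $\e_\Delta > 0$. On $\{\e_\Delta = 0\}$, take $\widetilde{\alpha}$ huge, or let $\widetilde{\alpha} \to \infty$, so its contribution vanishes. Then the indicator equals one, and the quantity equals $\E^{P}[\e_\Delta]$, which uniform validity bounds by $1$.

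Since validity is only required at $P_\Delta$, we also need it under $P_\mu$ for $\mu \geq \Delta$. The Remark after Definition~\ref{dfn:valid_dd_margin} passes from the boundary to the whole of $H_0^\Delta$, via monotonicity in $\Delta$. Concretely, for $\mu \geq \Delta$ we have $\e_\Delta \leq \e_\mu$, so $\E^{P_\mu}[\e_\Delta] \leq \E^{P_\mu}[\e_\mu] \leq 1$. This gives validity for all of $H_0^\Delta$.

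\textbf{Main obstacle.} I expect the delicate points to be the edge cases of the "only if" direction. These are $\e_\Delta = \infty$, which forces $\widetilde{\alpha} = 0$, disallowed, and $\e_\Delta = 0$. I would handle both by truncation: take $\widetilde{\alpha} = 1/(\e_\Delta \wedge n)$ on $\{\e_\Delta > 0\}$ and $\widetilde{\alpha} = n$ elsewhere, then apply monotone convergence as $n \to \infty$ to get $\E^P[\e_\Delta] \leq 1$. A second delicate point is making the right-continuity conventions consistent so that the identity holds with "$\geq$" rather than "$>$".
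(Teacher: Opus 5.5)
Your proposal is correct and follows the paper's route: the same inversion formulas and the identity $\{\widehat{\Delta}_\alpha \leq \Delta\} = \{\e_\Delta \geq 1/\alpha\}$. The paper then simply invokes Theorem 2 of \citet{koning2023post}, whereas you reprove that step by hand, using the pointwise bound $\mathbb{I}\{\e_\Delta \geq 1/\widetilde{\alpha}\}/\widetilde{\alpha} \leq \e_\Delta$ for ``if'' and the truncated level $\widetilde{\alpha} = 1/(\e_\Delta \wedge n)$ for ``only if'', and you also make explicit the step $\E^{P_\mu}[\e_\Delta] \leq \E^{P_\mu}[\e_\mu] \leq 1$ for $\mu \geq \Delta$, which is needed because Definition~\ref{dfn:post-hoc_valid} only checks $P_\Delta$ and which the paper leaves implicit.
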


            \begin{rmk}
                $\Delta \mapsto \e_\Delta$ can be obtained from the equivalence curve: $\e_\Delta = \sup\left\{\frac{1}{\alpha} : \Delta \geq \widehat{\Delta}_\alpha\right\}$.
            \end{rmk}

            \begin{rmk}
                As e-values generalize tests, the discussion here is a (rich) generalization of Section \ref{sec:test-inversion}.
                In particular, fixing the level $\alpha = a$, a data-dependent margin $\widehat{\Delta}_a$ corresponds to the equivalence curve $\alpha \mapsto \widetilde{\Delta}_\alpha$ for which $\widetilde{\Delta}_\alpha = \infty$ for $\alpha < a$ and $\widetilde{\Delta}_\alpha = \widehat{\Delta}_a$ for $\alpha \geq a$.
            \end{rmk}

            \begin{rmk}[Evidence non-decreasing in $\Delta$]\label{rmk:non-decreasing_e-value}
                Remark \ref{rmk:non-decreasing} extends to e-values.
                Indeed, if $\Delta \mapsto \e_\Delta$ were not non-decreasing, then we could have more evidence against larger hypotheses than against the smaller hypotheses nested within them.
                If necessary, we can also take the right-lower envelope to enforce this: $\underline{\e}_\Delta(x) := \inf_{\Delta' \geq \Delta} \e_{\Delta'}(x)$.
            \end{rmk}

        \subsection{Merging equivalence curves}\label{sec:merging}
            Merging evidence is important in equivalence assessment: evidence for the same problem may come from multiple independent studies, or multiple analyses based on different specifications may be performed on the same data.
            One of the attractive features of e-values is that they enable easy merging of evidence \citep{vovk2021values}.
            Using the link between curves of e-values and equivalence curves, we extend these merging operations to equivalence curves.

            In particular, consider two non-decreasing curves of valid e-values, $\Delta \mapsto \e_\Delta^j$, $j = 1, 2$.
            Their weighted average $\Delta \mapsto w \e_\Delta^1 + (1 - w) \e_\Delta^2$ is a curve of valid e-values, $w \in [0, 1]$.
            If both are independent, then their product $\Delta \mapsto \e_\Delta^1 \e_\Delta^2$ is a curve of valid e-values.
            By Theorem \ref{thm:post-hoc_valid}, inverting this merged curve of e-values produces a merged uniformly valid equivalence curve.
            In Proposition \ref{prp:mergingcurves}, we show this merging can be expressed in terms of equivalence curves.

            \begin{prp}\label{prp:mergingcurves} 
                Let $\alpha \mapsto \widehat{\Delta}_\alpha^1$ and $\alpha \mapsto \widehat{\Delta}_\alpha^2$ be two uniformly valid equivalence curves.
                Then, the following equivalence curve is uniformly valid:
                \begin{align*}
                    \widehat{\Delta}_\alpha^w
                        =   \inf_{\substack{
                                \alpha_1, \alpha_2 > 0: \\
                                \frac{w}{\alpha_1} + \frac{1 - w}{\alpha_2} = \frac{1}{\alpha}}
                            }
                            \max\{\widehat{\Delta}_{\alpha_1}^1,\widehat{\Delta}_{\alpha_2}^2\}.
                \end{align*}
                If the curves are independent, then the following curve is also uniformly valid:
                \begin{align*}
                    \widehat{\Delta}_\alpha^\times
                    = \inf_{\substack{\alpha_1, \alpha_2 > 0 : \\ \alpha_1 \alpha_2 = \alpha}}
                        \max\{\widehat{\Delta}_{\alpha_1}^1, \widehat{\Delta}_{\alpha_2}^2\}.
                \end{align*}
            \end{prp}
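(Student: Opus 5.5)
The plan is to pass through Theorem \ref{thm:post-hoc_valid}: translate each curve into its curve of e-values, merge at the e-value level, and translate back. By Theorem \ref{thm:post-hoc_valid} and the subsequent remark, the uniformly valid curves $\alpha \mapsto \widehat{\Delta}^j_\alpha$, $j=1,2$, correspond to non-decreasing right-continuous collections $\Delta \mapsto \e^j_\Delta = \sup\{1/\alpha : \Delta \geq \widehat{\Delta}^j_\alpha\}$, with each $\e^j_\Delta$ valid for $H_0^\Delta$. We form $\e^w_\Delta = w\e^1_\Delta + (1-w)\e^2_\Delta$ and, under independence, $\e^\times_\Delta = \e^1_\Delta \e^2_\Delta$.

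The first step is to check that these merged collections satisfy the hypotheses of Theorem \ref{thm:post-hoc_valid}. Validity follows from linearity of expectation, $\E^{P}[\e^w_\Delta] \leq w + (1-w) = 1$ for $P \in H_0^\Delta$, and from independence, $\E^P[\e^1_\Delta\e^2_\Delta] = \E^P[\e^1_\Delta]\E^P[\e^2_\Delta] \leq 1$. Both operations preserve the property of being non-decreasing in $\Delta$, since the e-values are nonnegative. Right-continuity is also preserved, since sums and products of nonnegative non-decreasing right-continuous functions remain right-continuous (with the convention $0\cdot\infty = 0$ handled separately). Theorem \ref{thm:post-hoc_valid} then says that the inverted curves $\inf\{\Delta : \e^w_\Delta \geq 1/\alpha\}$ and $\inf\{\Delta : \e^\times_\Delta \geq 1/\alpha\}$ are uniformly valid.

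The main step is to identify these inverses with the displayed formulas. Right-continuity together with monotonicity gives $\e^j_\Delta \geq 1/\alpha_j \iff \Delta \geq \widehat{\Delta}^j_{\alpha_j}$.

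For the product, we show $\{\Delta : \e^1_\Delta\e^2_\Delta \geq 1/\alpha\} = \bigcup_{\alpha_1\alpha_2=\alpha}\{\Delta \geq \max(\widehat{\Delta}^1_{\alpha_1},\widehat{\Delta}^2_{\alpha_2})\}$, and then take infima.
\begin{itemize}
\item For "$\supseteq$": if $\Delta \geq$ both margins, then $\e^1_\Delta\e^2_\Delta \geq 1/(\alpha_1\alpha_2)$.
\item For "$\subseteq$": given $\e^1_\Delta\e^2_\Delta \geq 1/\alpha$ with both factors positive, set $\alpha_1 = 1/\e^1_\Delta$ and $\alpha_2 = \alpha/\alpha_1 \geq 1/\e^2_\Delta$.
\end{itemize}
We then have $\Delta \geq \widehat{\Delta}^1_{\alpha_1}$ directly and $\Delta \geq \widehat{\Delta}^2_{\alpha_2}$ by monotonicity of the curve in $\alpha$. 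The average is handled the same way: choose $\alpha_1 = 1/\e^1_\Delta$ and define $\alpha_2$ through $w/\alpha_1 + (1-w)/\alpha_2 = 1/\alpha$, which gives $1/\alpha_2 \leq \e^2_\Delta$.

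I expect the main obstacle to be the boundary cases, not the core argument:
\begin{itemize}
\item infinite or zero e-values, where $\alpha_j$ would be $0$ or the constraint cannot be met with $\alpha_j>0$ exactly;
\item the degenerate weights $w \in \{0,1\}$;
\item whether the infimum over $(\alpha_1,\alpha_2)$ is attained.
\end{itemize}
The first thing I would try is to replace exact equality by a slack $\varepsilon$: pick $\alpha_1$ slightly larger than $1/\e^1_\Delta$ and use monotonicity, or alternatively show that the set-union identity holds up to closure. Taking infima makes this harmless. If this identification proves delicate, a fallback is to show only "$\geq$", namely that the displayed curve dominates the inverted merged curve pointwise. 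A pointwise larger (more conservative) curve remains uniformly valid, because its indicator event $\{\Delta \geq \widehat{\Delta}_{\widetilde\alpha}\}$ is smaller, and this suffices for the claim.
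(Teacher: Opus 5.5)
Your route is the paper's. You pass to e-value curves via Theorem \ref{thm:post-hoc_valid}, merge there, and identify the inverse through the set identity $\{\Delta : \e^w_\Delta \geq 1/\alpha\} = \bigcup (S^1_{\alpha_1}\cap S^2_{\alpha_2})$, using $S^j_{\alpha_j} = [\widehat{\Delta}^j_{\alpha_j},\infty)$. For the product, your witness $\alpha_1 = 1/\e^1_\Delta$ is one point of the paper's admissible interval $[1/\e^1_\Delta, \alpha\e^2_\Delta]$. Choosing from that interval also handles $\e^1_\Delta = \infty$ without a slack argument.

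The weighted average has a step that fails, and it fails in a generic regime, not only at the boundary. With $\alpha_1 = 1/\e^1_\Delta$, the constraint forces $(1-w)/\alpha_2 = 1/\alpha - w\e^1_\Delta$. This is $\leq 0$ whenever $w\e^1_\Delta \geq 1/\alpha$, so no $\alpha_2 > 0$ exists, even though $1/\alpha_2 \leq \e^2_\Delta$ holds formally. For example, $w = 1/2$, $\alpha = 0.05$, $\e^1_\Delta = 100$, $\e^2_\Delta = 1$ gives $(1-w)/\alpha_2 = -30$. Your proposed repair of taking $\alpha_1$ ``slightly larger'' does not work. You need $w/\alpha_1 < 1/\alpha$ and $w/\alpha_1 \geq 1/\alpha - (1-w)\e^2_\Delta$ at once, which here means $\alpha_1 \in (0.025, 1/39]$, far from $0.01$.

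The missing idea is to scale both e-values down proportionally. Set $c := \alpha^{-1}/(w\e^1_\Delta + (1-w)\e^2_\Delta) \in (0,1]$ and $\alpha_j := 1/(c\,\e^j_\Delta)$. Then the constraint holds with equality and $1/\alpha_j = c\,\e^j_\Delta \leq \e^j_\Delta$. This is exactly the paper's choice.

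Your fallback is sound and worth keeping. The ``$\supseteq$'' inclusion, which you prove correctly for both merges, already shows the displayed curve dominates the inverted merged curve pointwise, hence is uniformly valid. In the genuinely degenerate case this direction is all that survives. Take $w \in (0,1)$, $\e^2_\Delta = 0$ and $w\e^1_\Delta \geq 1/\alpha$: no admissible pair $(\alpha_1,\alpha_2)$ exists. If $\e^2 \equiv 0$, the displayed curve is identically $\infty$ while the inverted merged curve is $\widehat{\Delta}^1_{w\alpha}$, so the identity is false. The paper's witness $\alpha_2 = 1/(c\,\e^2_\Delta) = \infty$ silently breaks there too.

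If you rely on the fallback, one thing remains. Measurability of the displayed curve, an infimum over an uncountable family, is no longer inherited from Lemma \ref{lem:invert_threshold} through the identity. You must argue it separately or state the guarantee via the measurable inverted curve.
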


    \subsection{Two margins}\label{sec:twomargins}
        Up to this point, we have focused on a single margin to facilitate the presentation of our ideas, expressed in terms of one-sided hypotheses $H_0^\Delta : \mu \geq \Delta$.
        We now turn to the two-sided version of equivalence testing, expressed by hypotheses of the form
        \begin{align*}
            H_0^{(\Delta^-, \Delta^+)} : \mu \leq \Delta^- \textnormal{ or } \mu \geq \Delta^+,
        \end{align*}
        for a pair of margins $\Delta^- \leq \Delta^+$, and a real-valued parameter $\mu \in \mathbb{R}$.
        This recovers the one-sided setting if we set $\Delta^- = -\infty$, or $\Delta^- = 0$ if $\mu \in \mathbb{R}_+$.

        For assessing equivalence in this two-sided setting, we may consider an e-value $\varepsilon_{\Delta^-, \Delta^+}$ for each hypothesis $H_0^{(\Delta^-, \Delta^+)}$, and so for each pair of margins $(\Delta^-, \Delta^+)$.
        A resulting curve of e-values $(\Delta^-, \Delta^+) \mapsto \varepsilon_{\Delta^-, \Delta^+}$ is then assumed to be non-decreasing coordinate-wise: in the partial order $\precsim$ that is defined by $(\Delta_1^-, \Delta_1^+) \precsim (\Delta_2^-, \Delta_2^+)$ if and only if both $\Delta_1^- \geq \Delta_2^-$ and $\Delta_1^+ \leq \Delta_2^+$.
        That is, $\varepsilon_{\Delta_1^-, \Delta_1^+} \leq \varepsilon_{\Delta_2^-, \Delta_2^+}$ if $(\Delta_1^-, \Delta_1^+) \precsim (\Delta_2^-, \Delta_2^+)$.

        To obtain the two-margin analogue of an equivalence curve $\alpha \mapsto \widehat{\Delta}_\alpha$, we can similarly invert a curve of e-values.
        Here, we do not merely obtain a pair $(\widehat{\Delta}_\alpha^-, \widehat{\Delta}_\alpha^+)$ of margins for each $\alpha$.
        Instead, we obtain a collection of pairs for every $\alpha$:
        \begin{align*}
            \widehat{\mathcal{D}}_\alpha
                = \inf\{(\Delta^-, \Delta^+) : \varepsilon_{\Delta^-, \Delta^+} \geq 1 / \alpha\},
        \end{align*}
        where the infimum here is interpreted to produce the set of maximal lower bounds in the partial order $\precsim$.
        
    \section{Evidence-certified decisions}\label{sec:evidence-certified_decisions}
        In this section, we motivate our call to report a data-dependent margin $\widehat{\Delta}_\alpha$, or a more general uniformly valid curve of margins $\alpha \mapsto \widehat{\Delta}_\alpha$, instead of a classical fixed-$\Delta$ equivalence test outcome $\phi_\Delta$.
        For this purpose, we study the guarantees provided by these objects to decision makers.

        In particular, we consider a decision maker who faces a decision $d \in \mathcal{D}$ based on a loss function $L_{\mu} : \mathcal{D} \to [0, \infty)$ that is non-decreasing and left-continuous in the (unknown) true value $\mu^*$ of $\mu$, for every decision $d \in \mathcal{D}$.
        The decision maker receives information about $\mu^*$ in the form of a data-dependent margin $\widehat{\Delta}_\alpha$, or equivalence curve $\alpha \mapsto \widehat{\Delta}_\alpha$, to aid in the decision making process.

        \begin{exm}
            The decision maker may be a regulator who needs to make a decision $d$ on the recommended daily consumption of a certain food item, based on the unknown concentration $\mu^*$ of a potentially harmful substance in the food item.
            Here, the loss $L_\mu$ may describe the expected negative impact on public health associated with each decision if $\mu$ is the true concentration, where higher concentrations result in a larger negative impact. 
            A test $\phi_\Delta$, data-dependent margin $\widehat{\Delta}_\alpha$ or equivalence curve $\alpha \mapsto \widehat{\Delta}_\alpha$ expresses the information about $\mu^*$ that is available to the decision maker.
        \end{exm}

        \subsection{Decisions based on a data-dependent margin}
            In Theorem \ref{thm:evidence-informed_decision}, we show how the validity \eqref{ineq:guarantee_formal} of a data-dependent margin $\widehat{\Delta}_\alpha$ can be passed on to a uniform loss bound across data-dependent decisions.
            This bound is expressed by the loss function $L_{\widehat{\Delta}_\alpha}$ at $\mu = \widehat{\Delta}_\alpha$.
            The interpretation of this result is that whatever the true value $\mu^*$ is, the probability that a decision $\widehat{d}$ yields a loss greater than $L_{\widehat{\Delta}_\alpha}(\widehat{d})$ is small.
            
            \begin{thm}[Uniform loss bound]\label{thm:evidence-informed_decision}
                If $\widehat{\Delta}_\alpha$ is valid, then $\sup_{\mu \geq 0} P_{\mu}(L_{\mu}(\widehat{d}) > L_{\widehat{\Delta}_\alpha}(\widehat{d})) \leq \alpha$, for every data-dependent decision $\widehat{d}$.
            \end{thm}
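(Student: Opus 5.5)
The plan is to reduce the loss-bound event to the coverage event controlled by Definition \ref{dfn:valid_dd_margin}, using only monotonicity of $L_\mu$ in $\mu$. Fix an arbitrary data-dependent decision $\widehat{d}$ and an arbitrary true value $\mu \geq 0$. The key pointwise observation is the event inclusion
\begin{align*}
    \{L_{\mu}(\widehat{d}) > L_{\widehat{\Delta}_\alpha}(\widehat{d})\}
        \subseteq \{\mu > \widehat{\Delta}_\alpha\}
        \subseteq \{\mu \geq \widehat{\Delta}_\alpha\}.
\end{align*}
To prove it, I argue by contraposition on each realization $x$. If $\mu < \widehat{\Delta}_\alpha(x)$, then, since $d \mapsto L_{\cdot}(d)$ is non-decreasing in its parameter for \emph{every} fixed decision, it is in particular non-decreasing for the realized decision $d = \widehat{d}(x)$. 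Hence $L_\mu(\widehat{d}(x)) \leq L_{\widehat{\Delta}_\alpha(x)}(\widehat{d}(x))$, so the loss-exceedance event fails. This pointwise argument is why the bound holds uniformly over all data-dependent $\widehat{d}$: no independence between $\widehat{d}$ and $\widehat{\Delta}_\alpha$ is needed.

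Given the inclusion, monotonicity of probability yields
\begin{align*}
    P_\mu\bigl(L_\mu(\widehat{d}) > L_{\widehat{\Delta}_\alpha}(\widehat{d})\bigr) \leq P_\mu(\mu \geq \widehat{\Delta}_\alpha).
\end{align*}
The right-hand side is exactly the quantity in \eqref{ineq:guarantee_formal} with $\Delta = \mu$, so it is at most $\alpha$. Taking the supremum over $\mu \geq 0$ finishes the argument.

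The only delicate points are minor technicalities. First, when $\widehat{\Delta}_\alpha = \infty$ we need $L_\infty$ to make sense; I would define $L_\infty(d) := \lim_{\mu \to \infty} L_\mu(d)$, which exists in $[0,\infty]$ by monotonicity, so the inclusion still holds. Second, the events should be measurable. I would use the measurability of $\widehat{\Delta}_\alpha$ established in the proof of Proposition \ref{prp:Delta_test_equivalence}, together with joint measurability of $(\mu, d) \mapsto L_\mu(d)$; left-continuity in $\mu$ is what allows approximation through rational $\mu$ to secure this. If measurability fails, I would interpret the probability as an outer probability, and the same inequality goes through. I expect this measurability bookkeeping to be the main, though mild, obstacle; the substantive step is the one-line monotonicity inclusion.
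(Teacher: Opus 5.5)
Your proposal is correct and follows the paper's proof: monotonicity of $\mu \mapsto L_\mu(d)$ gives $L_\mu(\widehat{d}) > L_{\widehat{\Delta}_\alpha}(\widehat{d}) \implies \widehat{\Delta}_\alpha \leq \mu$ pointwise, and validity is then applied at $\Delta = \mu$. Your remarks on defining $L_\infty$ and on measurability are reasonable extra bookkeeping that the paper leaves implicit.
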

            \begin{proof}
                As $\mu \mapsto L_\mu(d)$ is non-decreasing for each $d$, we have $L_\mu(\widehat{d}) > L_{\widehat{\Delta}_\alpha}(\widehat{d}) \implies \widehat{\Delta}_\alpha \leq \mu$.
                Validity of $\widehat{\Delta}_\alpha$ then implies $\sup_{\mu \geq 0} P_{\mu}(L_{\mu}(\widehat{d}) > L_{\widehat{\Delta}_\alpha}(\widehat{d})) \leq \sup_{\mu \geq 0} P_{\mu}(\mu \geq \widehat{\Delta}_\alpha) \leq \alpha$.
            \end{proof}

            This loss bound is inspired by recent work on certifying decisions by \citet{andrews2025certified}.
            They focus on the decision that attains the tightest bound: the minimax decision $\widehat{d}_\alpha^{\textnormal{MM}} \in \argmin_{d \in \mathcal{D}} \sup_{\mu < \widehat{\Delta}_\alpha} L_{\mu}(d) = \argmin_{d \in \mathcal{D}} L_{\widehat{\Delta}_\alpha}(d)$, assuming it exists.
            Such a minimax decision is also known as an \emph{as-if} decision, because the decision is made `as-if' $\mu^* < \widehat{\Delta}_\alpha$  \citep{manski2021econometrics}.

            Our Theorem \ref{thm:evidence-informed_decision} is more than an application of \citet{andrews2025certified} to equivalence assessment.
            The innovation here is that our result holds uniformly across all data-dependent decisions, whereas \citet{andrews2025certified} only consider the minimax decision.
            This is important, because even though the minimax decision attains the tightest bound, decision makers may opt for a different decision for external reasons.
            Our result still provides a loss bound on such decisions.
            Moreover, it avoids the technical problem of requiring the existence of a minimax decision.

            % \begin{rmk}
            %     Instead of the plain minimax decision, Wang \& Dobriban consider weighting $L_{\widehat{\Delta}_\alpha}$ with the worst-case loss if $\mu > \Delta$: $L_\infty(d) = \lim_{\mu \to \infty} L_\mu(d)$.
            %     This leads to a minimax decision $\argmin_{d \in \mathcal{D}} w L_{\widehat{\Delta}_\alpha}(d) + (1-w) L_\infty(d)$, $L_\infty(d) = \lim_{\mu \to \infty} L_\mu(d)$.
            % \end{rmk}

        \subsection{When is a classical equivalence test sufficient?}\label{sec:recover_classical_testing} 
            The loss bound provided in Theorem \ref{thm:evidence-informed_decision} provides a strong motivation for reporting a data-dependent margin $\widehat{\Delta}_\alpha$.
            At the same time, this analysis raises a natural question: for what kind of loss function would the outcome of a classical fixed-$\Delta$ equivalence test be sufficient?
            In this section, we show that a classical equivalence test $\phi_\Delta$ is sufficient if the loss function hinges on a fixed $\Delta$.

            Consider a loss function $L_\mu$ for which the loss $L_\mu(d)$ of every decision only depends on whether $\mu \leq \Delta$.
            That is, for every $d \in \mathcal{D}$ there exist values $\ell^-(d) \leq \ell^+(d)$ such that 
            \begin{align*}
                L_\mu(d) = \ell^-(d) \mathbb{I}\{\mu \leq \Delta\} + \ell^+(d) \mathbb{I}\{\mu > \Delta\}.
            \end{align*}
            This means that the bound $L_{\widehat{\Delta}_\alpha}(\widehat{d})$ on the loss $L_{\mu}(\widehat{d})$ in Theorem \ref{thm:evidence-informed_decision} only depends on the data through the outcome of the test $\phi_\Delta = \mathbb{I}\{\widehat{\Delta}_\alpha \leq \Delta\} / \alpha$.
            Indeed, substituting this loss into Theorem \ref{thm:evidence-informed_decision} yields the loss bound
            \begin{align*}
                \sup_{\mu \geq 0} P_\mu\left(L_{\mu}(\widehat{d}) > \ell^-(\widehat{d}) \mathbb{I}\{\widehat{\Delta}_\alpha \leq \Delta\} + \ell^+(\widehat{d}) \mathbb{I}\{\widehat{\Delta}_\alpha > \Delta\}\right)
                    \leq \alpha.
            \end{align*}
            As a result, the outcome of the test $\phi_\Delta$ is the relevant information for the decision maker.

            To see this more directly, we can show that the minimax decision only hinges on the outcome of $\phi_\Delta$.
            Indeed, for this loss function we have that the optimal decision $d^*(\mu) \in \argmin_{d \in \mathcal{D}} L_\mu(d)$ only depends on whether $\mu \leq \Delta$: $d^*(\mu) = \argmin_{d \in \mathcal{D}} \ell^-(d) =: d^-$ if $\mu \leq \Delta$ and $d^*(\mu) = \argmin_{d \in \mathcal{D}} \ell^+(d) =: d^+$ if $\mu > \Delta$.
            Substituting $\widehat{\Delta}_\alpha$ in for $\mu$ yields that the minimax decision equals $\widehat{d}_\alpha^{\textnormal{MM}} = d^-$ if $\phi_\Delta$ rejects and $\widehat{d}_\alpha^{\textnormal{MM}} = d^+$, otherwise.

            This analysis shows the outcome of a single fixed-$\Delta$ equivalence test $\phi_\Delta$ is relevant if the loss $L_\mu$ hinges on this value of $\Delta$.
            At the same time, the fact that $\Delta$ is hard to specify in practice suggests that loss functions rarely hinge on a single value of $\Delta$ in practice.
            Indeed, if practical loss functions would often hinge on a single $\Delta$, then we believe practitioners would not struggle as much to pick $\Delta$.
            This supports our conclusion that a single test outcome $\phi_\Delta$ is not the right statistical object to report, and that reporting $\widehat{\Delta}_\alpha$ is more appropriate.
        
        \subsection{Decisions based on equivalence curves}
            In Theorem \ref{thm:uniform_curve_decisions}, we generalize Theorem \ref{thm:evidence-informed_decision} to uniformly valid equivalence curves $\alpha \mapsto \widehat{\Delta}_\alpha$.
            In this setting, the loss bound is now represented by a spectrum of loss functions $\alpha \mapsto L_{\widehat{\Delta}_\alpha}$ that is uniformly valid in the confidence certificate $\alpha$.

            \begin{thm}\label{thm:uniform_curve_decisions}
                If $\alpha \mapsto \widehat{\Delta}_\alpha$ is uniformly valid, then
                \begin{align*}
                    \sup_{\mu \geq 0} \E_{\widetilde{\alpha}}^{P_\mu}\left[\frac{P_\mu(L_\mu(\widehat{d}) > L_{\widehat{\Delta}_{\widetilde{\alpha}}}(\widehat{d}) \mid \widetilde{\alpha})}{\widetilde{\alpha}}\right]
                        \leq 1,
                \end{align*}
                for every data-dependent decision $\widehat{d}$ and data-dependent level $\widetilde{\alpha}$.
            \end{thm}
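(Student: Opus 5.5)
The plan mirrors the proof of Theorem \ref{thm:evidence-informed_decision}: an event inclusion that holds pointwise in the data, followed by an application of uniform validity, now with a data-dependent level $\widetilde{\alpha}$. Fix $\mu \geq 0$, a data-dependent decision $\widehat{d}$ and a data-dependent level $\widetilde{\alpha} > 0$.

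\textbf{Step 1 (pointwise inclusion).} On the event $\{L_\mu(\widehat{d}) > L_{\widehat{\Delta}_{\widetilde{\alpha}}}(\widehat{d})\}$ we must have $\mu > \widehat{\Delta}_{\widetilde{\alpha}}$. Otherwise $\mu \leq \widehat{\Delta}_{\widetilde{\alpha}}$, and since $\mu' \mapsto L_{\mu'}(d)$ is non-decreasing for every $d$, evaluating at $d = \widehat{d}$ gives $L_\mu(\widehat{d}) \leq L_{\widehat{\Delta}_{\widetilde{\alpha}}}(\widehat{d})$, a contradiction. If $\widehat{\Delta}_{\widetilde{\alpha}} = \infty$, I will interpret $L_\infty$ as $\sup_{\mu'} L_{\mu'}$; then the event is empty, which is consistent with the inclusion. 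Hence
\begin{align*}
    \{L_\mu(\widehat{d}) > L_{\widehat{\Delta}_{\widetilde{\alpha}}}(\widehat{d})\} \subseteq \{\mu \geq \widehat{\Delta}_{\widetilde{\alpha}}\}.
\end{align*}

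\textbf{Step 2 (conditioning).} The inclusion holds for every realization of the data, so it survives conditioning on $\widetilde{\alpha}$. By monotonicity of conditional probability,
\begin{align*}
    P_\mu\bigl(L_\mu(\widehat{d}) > L_{\widehat{\Delta}_{\widetilde{\alpha}}}(\widehat{d}) \mid \widetilde{\alpha}\bigr) \leq P_\mu\bigl(\mu \geq \widehat{\Delta}_{\widetilde{\alpha}} \mid \widetilde{\alpha}\bigr)
\end{align*}
almost surely.

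\textbf{Step 3 (uniform validity).} Divide by $\widetilde{\alpha} > 0$, take the expectation over $\widetilde{\alpha}$ under $P_\mu$, and take the supremum over $\mu \geq 0$. The result is then bounded by Definition \ref{dfn:post-hoc_valid} applied with $\Delta = \mu$, which gives the bound $1$.

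\textbf{Main obstacle.} The only delicate point is measure-theoretic: $\{L_\mu(\widehat{d}) > L_{\widehat{\Delta}_{\widetilde{\alpha}}}(\widehat{d})\}$ must be an event, so that its conditional probability is defined. I plan to handle this in the same way the paper treats measurability of $\widehat{\Delta}_\alpha$ in Appendix \ref{proof:Delta_test_equivalence}, if necessary by working with outer probabilities. As a fallback, I would rewrite the statement as $\E^{P_\mu}\bigl[\mathbb{I}\{L_\mu(\widehat{d}) > L_{\widehat{\Delta}_{\widetilde{\alpha}}}(\widehat{d})\}/\widetilde{\alpha}\bigr] \leq \E^{P_\mu}\bigl[\mathbb{I}\{\mu \geq \widehat{\Delta}_{\widetilde{\alpha}}\}/\widetilde{\alpha}\bigr] \leq 1$ by the tower property, which avoids explicit conditional probabilities. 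The final inequality is then exactly the uniform validity guaranteed, via Theorem \ref{thm:post-hoc_valid}, by the e-value $\e_\mu$: on $\{\mu \geq \widehat{\Delta}_{\widetilde{\alpha}}\}$ we have $\e_\mu \geq 1/\widetilde{\alpha}$.
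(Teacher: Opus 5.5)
Your proof is correct and follows the paper's approach. The paper gives no separate proof of this theorem, but its proof of Theorem \ref{thm:evidence-informed_decision} is your Step 1: monotonicity of $\mu \mapsto L_\mu(d)$ places the event $\{L_\mu(\widehat{d}) > L_{\widehat{\Delta}_{\widetilde{\alpha}}}(\widehat{d})\}$ inside $\{\mu \geq \widehat{\Delta}_{\widetilde{\alpha}}\}$. The extension to a data-dependent level is then exactly your Steps 2--3, i.e.\ conditioning on $\widetilde{\alpha}$ and applying Definition \ref{dfn:post-hoc_valid} with $\Delta = \mu$.

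Your handling of $\widehat{\Delta}_{\widetilde{\alpha}} = \infty$ and of measurability goes slightly beyond what the paper spells out. One small caveat: the indicator-based fallback still needs the event to be measurable, so outer probability is what actually handles that case. Neither point affects the core argument.
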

            
            There are various ways to use this loss spectrum:
            \begin{itemize}
                \item Given an arbitrary data-dependent decision $\widehat{d}$, it provides a spectrum $\alpha \mapsto L_{\widehat{\Delta}_\alpha}(\widehat{d})$ of loss bounds on $L_\mu(\widehat{d})$, each coupled with a different confidence level $\alpha$.
                \item Given a post-hoc choice of the margin $\Delta$, we can retrieve the corresponding data-dependent confidence level $\widetilde{\alpha} = \inf\{\alpha : \Delta \geq \widehat{\Delta}_\alpha\}$ from the equivalence curve $\alpha \mapsto \widehat{\Delta}_\alpha$ to obtain the loss function $d \mapsto L_{\widehat{\Delta}_{\widetilde{\alpha}}}(d)$.
                \item Given a post-hoc choice of the confidence level $\widetilde{\alpha}$, we can retrieve the corresponding margin $\widehat{\Delta}_{\widetilde{\alpha}}$ and obtain the matching loss function $d \mapsto L_{\widehat{\Delta}_{\widetilde{\alpha}}}(d)$.
            \end{itemize}

            One concrete idea is to present the decision maker with an entire spectrum of minimax decisions and their associated loss bounds: $\alpha \mapsto (\widehat{d}_\alpha^{\textnormal{MM}}, L_{\widehat{\Delta}_\alpha}(\widehat{d}_\alpha^{\textnormal{MM}}))$, where $\widehat{d}_\alpha^{\textnormal{MM}} \in \argmin_{d \in \mathcal{D}} L_{\widehat{\Delta}_\alpha}(d)$.
            A decision maker may then browse such decisions and select the decision with the desired loss bound \citep{koning2025fuzzy}.

            \begin{rmk}
                An alternative idea proposed by \citet{grunwald2023posterior} is to de-emphasize implausible values of $\Delta$ by weighting the $L_\Delta$ by the amount of evidence $\e_\Delta$ against $\Delta$.
                This leads to weighted minimax decisions $\argmin_{d \in \mathcal{D}} \sup_{\Delta \geq 0} L_\Delta(d) / \e_\Delta$, for non-negative loss functions $L_\Delta$.
            \end{rmk}

    \section{Optimal e-values for equivalence under STP$_3$}\label{sec:optimalSTP3}
        In Section \ref{sec:data-dependent_margins}, we established uniform validity of equivalence curves obtained by inverting e-values.
        However, validity alone is not sufficient to guarantee that a curve is informative.
        Indeed, a curve for which $\widehat{\Delta}_\alpha = \infty$ for every $\alpha$ is also uniformly valid, but completely uninformative.
        Because these curves are obtained by inverting a curve of e-values $\Delta \mapsto \e_\Delta$, the remainder of this paper focuses on studying good e-values.
        
        In this section, we switch to the two-sided equivalence setting, covering hypotheses of the form $H_0^{(\Delta^-,\Delta^+)}: \mu \leq \Delta^- \ \text{or}\ \mu \geq \Delta^+$ as discussed in Section \ref{sec:twomargins}.
        We make this switch because the optimality results for the two-sided setting are conceptually richer and technically more demanding than the one-sided claims.\footnote{The one-sided results can be viewed as a limiting special case, obtained by taking $\Delta^- \to -\infty$ which leads to setting $c = 0$ in Theorem \ref{thm:Uoptimal}, only requiring STP$_2$.}

    \subsection{Setup and STP$_3$}\label{sec:modelsetup}
        We consider a single-parameter model $(P_\mu)_{\mu \in \cM}$ dominated by a $\sigma$-finite measure $\nu$, where $\cM \subseteq \mathbb{R}$.
        We consider a sample space $\mathcal{X} \subseteq \mathbb{R}$, which may be interpreted as the sample space of a (sufficient) statistic.
        We denote the density of $P_\mu$ with respect to $\nu$ by $p_\mu = dP_\mu / d\nu$.

        Fix $\Delta^- < \Delta^+$ with $\Delta^-, \Delta^+ \in \mathcal{M}$.
        We consider the hypotheses,
        \begin{align*}
            H_0^{(\Delta^-, \Delta^+)}
                = \{P_\mu : \mu \leq \Delta^- \text{ or } \mu \geq \Delta^+\},
            \quad
            H_1^{(\Delta^-, \Delta^+)} 
                = \{P_\mu : \Delta^- < \mu < \Delta^+\},
        \end{align*}
        assuming that both are non-empty.

        We make one structural assumption: we assume strict total positivity of order 3 (STP$_3$) of the kernel $(\mu,x)\mapsto p_\mu(x)$ \citep{karlin1968total}.
        The formal definition of STP$_3$ is quite technical, and so deferred to Appendix~\ref{app:tp3counterexample}.
        STP$_3$ is satisfied, for example, by exponential families.

        \begin{rmk}[STP$_3$, STP$_2$ and MLR]
            STP$_3$ may be interpreted as a strengthened version of the Monotone Likelihood Ratio assumption (MLR), which coincides with STP$_2$.
            Where MLR controls pairwise likelihood-ratio monotonicity, STP$_3$ additionally controls a triplewise ordering. 
            This additional structure is precisely what delivers the variation-diminishing properties used below \citep{brown1981variation}.
        \end{rmk}

    \subsection{Neyman-Pearson optimality}
        We start by briefly covering the familiar Neyman-Pearson optimality framework.

        Under STP$_3$ a uniformly most powerful (UMP) test is known to exist.
        Its critical region is determined by a likelihood ratio against a mixture of the two boundary densities \citep{lehmann1959testing, kallenberg1984}. 
        Concretely, for any $P_\mu \in H_1^{(\Delta^-, \Delta^+)}$, the classical level-\(\alpha\) UMP test can be written as a special e-value
        \begin{equation*}
            \e^\alpha(x)
            =
            \frac{1}{\alpha}\,\mathbb{I}\!\{\Lambda_c(x)>k_\alpha\}
            +
            \frac{\eta_\alpha(x)}{\alpha}\,\mathbb{I}\!\{\Lambda_c(x)=k_\alpha\},
            \qquad
            \Lambda_c(x):=
            \frac{p_{\mu}(x)}
            {c\,p_{\Delta^-}(x)+(1-c)\,p_{\Delta^+}(x)},
        \end{equation*}
        where $c\in[0,1]$, $k_\alpha>0$, and $\eta_\alpha:\cX\to[0, 1]$ are chosen to satisfy the boundary size equalities  $\E^{P_{\Delta^-}}[\e^\alpha(X)]=\E^{P_{\Delta^+}}[\e^\alpha(X)] = 1$.

        Here, the case that $\Lambda_c(x) = k_\alpha$ is classically interpreted as an instruction to reject with probability $\eta_\alpha(x)$ using external randomization.
        However, we follow \citet{koning2024continuous} in recommending to instead report $\e^\alpha(x)$ directly as evidence.

    %With these assumptions in place, we now directly characterize utility-optimal e-values. 
    
    \subsection{Utility-optimal e-values}\label{sec:Uoptimal}
        The classical Neyman-Pearson framework can be viewed as a special case of a more general optimal e-value setting.
        In particular, a natural goal is to select an e-value that maximizes the expected utility $\E^{P_\mu}[U(\e)]$ under some alternative $P_\mu \in H_1^{(\Delta^-, \Delta^+)}$ for some utility function $U : [0, \infty] \to [-\infty, \infty]$.
        Indeed, this can be shown to recover the Neyman-Pearson framework for the `Neyman-Pearson utility function' $U_\alpha(e) = \min\{e, 1 / \alpha\}$ \citep{koning2024continuous}.

        Beyond the Neyman-Pearson utility function, one can generally not expect an e-value to be expected-utility optimal over a composite hypothesis $H_1^{(\Delta^-, \Delta^+)}$.
        We therefore instead maximize the expected utility $\E^Q[U(\e)]$ under a mixture alternative $Q = \int P_\mu\, dw(\mu)$, where $w$ is a mixture over $H_1^{(\Delta^-, \Delta^+)}$.
        In practice, $w$ may be chosen as a Dirac measure on some relevant point (e.g. $w = \delta_0$ if $\mu = 0$ is a relevant alternative), or as some kind of uniform distribution over the interval $[\Delta^-, \Delta^+]$.

        Let $\cE$ denote the class of valid e-values for $H_0^{(\Delta^-, \Delta^+)}$.
        Assume that $U : [0, \infty] \to [-\infty, \infty]$ is increasing, strictly concave and continuously differentiable on $(0, \infty)$.
        Moreover, assume that its derivative $U'$ satisfies $\lim_{x \to \infty} U'(x) = 0$.
        To ensure that the optimizer exists without restrictions on the distributions, we assume $x \mapsto x U'(x)$ is bounded.

        Theorem~\ref{thm:Uoptimal} presents our main optimality result.
        As in the classical Neyman-Pearson case, these utility-optimal e-values also involve a boundary-mixture likelihood ratio.

        \begin{thm}\label{thm:Uoptimal}
            Assume \(\{p_\mu:\mu\in\cM\}\) is STP\(_3\), and let \(q\) be the density of the mixture alternative \(Q\) with respect to $\nu$. For \(c\in(0,1)\) and \(\lambda>0\), define
            \[
            \e_{c,\lambda}(x):=(U')^{-1}\!\left(
            \lambda\,
            \frac{c\,p_{\Delta^-}(x)+(1-c)\,p_{\Delta^+}(x)}{q(x)}
            \right).
            \]
            Then there exists \((c^*,\lambda^*)\in(0,1)\times(0,\infty)\) such that \(\e_{c^*,\lambda^*}\) is the \(U\)-optimal e-variable, i.e.
            \[
            \e_{c^*,\lambda^*}\in\arg\max_{\e\in\cE}\E_Q[U(\e)].
            \]
        \end{thm}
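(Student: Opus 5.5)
The plan is to reduce the composite null to its two boundary points, optimize against that two-point null by Lagrangian duality, and then use STP$_3$ to show the two-point optimizer remains valid for the whole composite null.

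\textbf{Step 1 (two-point relaxation).} Let $\cE_2 \supseteq \cE$ be the e-values valid only for $\{P_{\Delta^-},P_{\Delta^+}\}$. For fixed $c\in(0,1)$ and $\lambda>0$, consider the Lagrangian
\[
\E_Q[U(\e)]-\lambda\bigl(c\,\E_{P_{\Delta^-}}[\e]+(1-c)\,\E_{P_{\Delta^+}}[\e]-1\bigr).
\]
Maximize it pointwise in $x$. By strict concavity of $U$ and the conditions $U'(\infty)=0$, $U'$ decreasing, the maximizer is exactly $\e_{c,\lambda}$. Hence for any $\e\in\cE_2$ we get
\[
\E_Q[U(\e)]\le \E_Q[U(\e_{c,\lambda})]-\lambda\bigl(c\,\E_{\Delta^-}[\e_{c,\lambda}]+(1-c)\,\E_{\Delta^+}[\e_{c,\lambda}]-1\bigr).
\]
So if some $(c^*,\lambda^*)$ gives $\E_{\Delta^-}[\e_{c^*,\lambda^*}]=\E_{\Delta^+}[\e_{c^*,\lambda^*}]=1$, then $\e_{c^*,\lambda^*}$ maximizes $\E_Q[U(\cdot)]$ over $\cE_2$.

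\textbf{Step 2 (existence of $(c^*,\lambda^*)$).} Define $g_\pm(c,\lambda)=\E_{\Delta^\pm}[\e_{c,\lambda}]$.
\begin{itemize}
\item Both maps are continuous by dominated convergence. The domination comes from boundedness of $xU'(x)$: the integrand $\e_{c,\lambda}\,p_{\Delta^\pm}$ is controlled by $\e_{c,\lambda}\cdot \lambda^{-1}q\cdot\mathrm{const}\cdot U'(\e_{c,\lambda})/c$.
\item Both are strictly decreasing in $\lambda$, with limits $\infty$ as $\lambda\to 0$ and $0$ as $\lambda\to\infty$.
\item So for each $c$ there is a unique $\lambda(c)$ with $c\,g_-+(1-c)\,g_+=1$.
\item Set $h(c)=g_-(c,\lambda(c))-g_+(c,\lambda(c))$. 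As $c\to 0$, the denominator puts almost no weight on $p_{\Delta^-}$, so the $P_{\Delta^-}$-mass becomes large and $h>0$. As $c\to1$, the reverse holds and $h<0$.
\item The intermediate value theorem then gives $c^*$ with $h(c^*)=0$, hence both constraints hold with equality.
\end{itemize}
I expect the boundary limits of $h$ to need some care, and a minimax (Sion) argument over $c$ is a fallback.

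\textbf{Step 3 (validity on the whole null; main obstacle).} It remains to show $\E_{P_\mu}[\e^*]\le 1$ for $\mu<\Delta^-$ and for $\mu>\Delta^+$, so that $\e^*\in\cE\subseteq\cE_2$ and optimality transfers.
\begin{itemize}
\item Write $f(\mu)=\E_{P_\mu}[\e^*]-1=\int(\e^*(x)-1)\,p_\mu(x)\,d\nu(x)$.
\item The function $x\mapsto \e^*(x)-1$ is a monotone transform of $\Lambda_{c^*}(x)$ minus a constant, and $\Lambda_{c^*}-k$ has the sign of $q-k(c^*p_{\Delta^-}+(1-c^*)p_{\Delta^+})$.
\item Since $q$ is a mixture over $(\Delta^-,\Delta^+)$, this is an integral of $p_\mu(x)$ against a signed measure on $\mu$ with at most two sign changes, pattern $-,+,-$. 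By STP$_3$ variation diminishing (Karlin), $x\mapsto \e^*(x)-1$ has at most two sign changes, and if two then in the pattern $-,+,-$.
\item Applying variation diminishing again, $\mu\mapsto f(\mu)$ has at most two sign changes with the same pattern.
\item We know $f(\Delta^-)=f(\Delta^+)=0$, and $f>0$ somewhere in between, namely at the mixture $Q$ itself (strictly unless degenerate).
\item With at most two zeros-as-sign-changes and the ordering preserved, $f\le 0$ outside $[\Delta^-,\Delta^+]$.
\end{itemize}
The delicate part is handling the zeros of $f$ at the boundary points, since these may not be genuine sign changes. I would invoke the strict version of variation diminishing, which counts zeros for STP$_3$ kernels (Brown, Johnstone, MacGibbon 1981), to conclude exactly this.
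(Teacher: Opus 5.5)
Your outline follows the paper's proof closely. You calibrate $\e_{c,\lambda}$ at the two boundary points, use the zero-counting variation-diminishing property of STP$_3$ kernels to get $S^+(f)\le 2$ for $f(\mu)=\E_{P_\mu}[\e^*]-1$, and certify optimality through the multiplier measure $P^*=c^*P_{\Delta^-}+(1-c^*)P_{\Delta^+}$. Two of your substitutions are legitimate and differ from the paper. Your Step~1 (pointwise maximization of the Lagrangian, i.e.\ weak duality over the two-point relaxation) is a self-contained, more elementary replacement for Lemma~\ref{lem:utility4.1Larsson}, which relies on an imported first-order characterization and $P^*\in\cP_{\mathrm{eff}}$. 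Writing $q-kp_{c^*}=\int p_\mu\,d\sigma(\mu)$ with $\sigma=w-kc^*\delta_{\Delta^-}-k(1-c^*)\delta_{\Delta^+}$ is an equivalent alternative to Lemma~\ref{lem:stp3mixture}.

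\textbf{The gap.} You assert, without argument, that ``$f>0$ somewhere in between, namely at the mixture $Q$ itself''. This is the step that decides validity, not a side remark. Once $S^+(f)\le 2$ and $f(\Delta^-)=f(\Delta^+)=0$, either $f\equiv 0$, or $f$ has one strict sign on $\cM_1$ and the opposite strict sign outside $[\Delta^-,\Delta^+]$. The case with $f<0$ on $\cM_1$ is exactly an invalid e-value. What excludes it is $\int f\,dw=\E_Q[\e^*]-1\ge 0$, and this needs proof.

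The paper proves it with a covariance (Chebyshev) inequality. Write $\psi:=(U')^{-1}$ and $R=q/p_{c^*}$. Then $\E_Q[\e^*]=\E_{P^*}[R\,\psi(\lambda^*/R)]\ge \E_{P^*}[R]\,\E_{P^*}[\psi(\lambda^*/R)]=1$. This holds because both factors are increasing in $R$, $\E_{P^*}[R]=\int q\,d\nu=1$, and $\E_{P^*}[\e^*]=1$ by calibration.

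In your framework there is a shorter route. The constant $1$ lies in $\cE_2$, so Step~1 and Jensen give $U(1)\le \E_Q[U(\e^*)]\le U(\E_Q[\e^*])$, hence $\E_Q[\e^*]\ge 1$. The non-strict inequality suffices, because the bad pattern would force $f<0$ on all of $\cM_1$ and so $\E_Q[\e^*]<1$. You therefore do not need the ``strictly unless degenerate'' clause.

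\textbf{A smaller point in Step~2.} ``The $P_{\Delta^-}$-mass becomes large'' is not the mechanism; $g_-$ need not blow up, and in the Gaussian model it stays bounded as $c\downarrow 0$. Because your $\lambda(c)$ moves with $c$, you must control it:
\begin{itemize}
\item $\lambda(c)\le C_U:=\sup_x xU'(x)$, since $c\,g_-+(1-c)\,g_+=\int \e_{c,\lambda}p_c\,d\nu\le C_U/\lambda$.
\item $\liminf_{c\downarrow 0}\lambda(c)>0$: if $\lambda(c_n)\to 0$, Fatou gives $g_+\to\infty$, contradicting $(1-c)g_+\le 1$.
\item Along a subsequence with $\lambda(c_n)\to\bar\lambda$, apply Fatou to the $P_{\Delta^-}$ term and dominated convergence to the $P_{\Delta^+}$ term.
\item The limit $\psi(\bar\lambda p_{\Delta^+}/q)$ is strictly decreasing in $x$, because $p_{\Delta^+}/q$ is strictly increasing. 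Strict MLR then yields $\liminf_n h(c_n)>0$, and symmetrically at $c\uparrow 1$.
\end{itemize}
You also need continuity of $c\mapsto\lambda(c)$ to apply the intermediate value theorem. The paper avoids the moving multiplier by fixing $\lambda$ first, where $c\mapsto g_-(c,\lambda)-g_+(c,\lambda)$ is strictly decreasing by a one-line sign argument, and only then applies the intermediate value theorem in $\lambda$.
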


        Corollary \ref{cor:logoptimal} covers the popular log-utility \citep{grunwald2024safe, larsson2025numeraire}.
        \begin{cor}[Log-optimal e-value]\label{cor:logoptimal}
            Under \(U(e)=\log e\), Theorem~\ref{thm:Uoptimal} yields
            \[
            \e_{c^*}(x)=\frac{q(x)}{c^*p_{\Delta^-}(x)+(1-c^*)p_{\Delta^+}(x)}.
            \]
        \end{cor}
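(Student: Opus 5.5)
The corollary should follow by specializing Theorem~\ref{thm:Uoptimal} to $U(e)=\log e$ and computing $(U')^{-1}$ explicitly. The only subtlety is that $\log$ does not satisfy the standing assumption that $x\mapsto xU'(x)$ is bounded. In fact $xU'(x)\equiv 1$, so this assumption does hold. The other hypotheses also hold: $\log$ is increasing, strictly concave and continuously differentiable on $(0,\infty)$, and $U'(x)=1/x\to 0$ as $x\to\infty$. Hence Theorem~\ref{thm:Uoptimal} applies directly.

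The computation is as follows. Since $U'(x)=1/x$, we have $(U')^{-1}(y)=1/y$. Therefore
\[
\e_{c,\lambda}(x)=\frac{q(x)}{\lambda\,\bigl(c\,p_{\Delta^-}(x)+(1-c)\,p_{\Delta^+}(x)\bigr)}.
\]
Theorem~\ref{thm:Uoptimal} supplies $(c^*,\lambda^*)$ such that $\e_{c^*,\lambda^*}$ is log-optimal. It remains to show that $\lambda^*=1$.

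To pin down $\lambda^*$, I would use optimality together with validity at the two boundaries. For fixed $c$, multiplying an e-value by $1/\lambda$ changes $\E_Q[\log \e]$ by the additive constant $-\log\lambda$. Optimality therefore forces $\lambda^*$ to be the smallest value that keeps $\e_{c^*,\lambda}$ valid, so the constraint binds at some boundary. The key identity comes from taking the $(c^*,1-c^*)$-mixture of the boundary expectations:
\[
c^*\E^{P_{\Delta^-}}[\e_{c^*,\lambda}]+(1-c^*)\E^{P_{\Delta^+}}[\e_{c^*,\lambda}]=\frac{1}{\lambda}\int q\,d\nu=\frac{1}{\lambda}.
\]
Validity makes both boundary expectations at most one, so $1/\lambda^*\le 1$, that is, $\lambda^*\ge 1$. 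Conversely, optimality says we cannot scale up further. The first-order/KKT structure behind Theorem~\ref{thm:Uoptimal} also suggests that both boundary constraints bind when $c^*\in(0,1)$. If both bind, the identity gives $1/\lambda^*=1$.

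The main obstacle is justifying that both boundary constraints are tight, rather than only one. I would argue this by contradiction. Suppose, say, $\E^{P_{\Delta^-}}[\e]<1$. Then perturbing $c$ toward $0$ (and rescaling) would strictly improve $\E_Q[\log\e]$ while keeping validity at $\Delta^+$. This contradicts either the optimality of $c^*$ or the fact that $c^*$ is interior.

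Alternatively, I could bypass this by invoking the numeraire characterization of \citet{larsson2025numeraire}. For the log-optimal $\e^*$, that characterization gives $\E^{P}[\e/\e^*]\le 1$ for all valid $\e$ and all $P$ in the (convexified) null. Applying it to the boundary mixture $c^*P_{\Delta^-}+(1-c^*)P_{\Delta^+}$ gives $\E[\e^*]=1$ under that mixture, and the displayed identity then yields $\lambda^*=1$.

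Validity at interior null points $\mu<\Delta^-$ and $\mu>\Delta^+$ is inherited from Theorem~\ref{thm:Uoptimal}, so the stated form $\e_{c^*}=q/(c^*p_{\Delta^-}+(1-c^*)p_{\Delta^+})$ follows.
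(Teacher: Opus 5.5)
\textbf{Verdict: there is a genuine gap, in the one step you flag as the main obstacle.}

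Your reduction is correct. Since $U'(x)=1/x$, we have $xU'(x)\equiv 1$, so the boundedness assumption does hold, contrary to your opening clause. This gives $\e_{c,\lambda}=q/(\lambda p_c)$ with $p_c=c\,p_{\Delta^-}+(1-c)\,p_{\Delta^+}$. Your identity $c\,\E^{P_{\Delta^-}}[\e_{c,\lambda}]+(1-c)\,\E^{P_{\Delta^+}}[\e_{c,\lambda}]=1/\lambda$ is exactly what is needed.

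The gap is the claim that both boundary constraints are tight. Neither of your arguments establishes it.
\begin{itemize}
\item The perturbation argument is only a sketch. Moving $c$ changes $\E_Q[\log \e]$ at first order by a multiple of $\E_Q[(p_{\Delta^-}-p_{\Delta^+})/p_c]$, whose sign you do not control. The perturbed e-value would also have to be shown valid on the whole null $\{\mu\le\Delta^-\}\cup\{\mu\ge\Delta^+\}$, not just at $\Delta^+$. That requires redoing the STP$_3$ sign-change argument from Theorem~\ref{thm:Uoptimal}.
\item Optimality in $\lambda$ alone only shows that the supremum of $\E^{P_\mu}[\e]$ over the null equals $1$. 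It does not show binding at a particular boundary.
\item The numeraire route misstates the property. The numeraire property is $\E_Q[\e/\e^*]\le 1$ for all valid $\e$, an expectation under the alternative $Q$, not under null distributions. Even in your version, the choice $\e=\e^*$ gives only $1\le 1$. So ``$\E[\e^*]=1$ under the boundary mixture'' does not follow.
\end{itemize}

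The missing observation is that no optimality argument is needed. In the proof of Theorem~\ref{thm:Uoptimal}, the pair $(c^*,\lambda^*)$ is the one constructed in Proposition~\ref{prp:determineconstantutility}. That pair satisfies $\E^{P_{\Delta^-}}[\e_{c^*,\lambda^*}]=\E^{P_{\Delta^+}}[\e_{c^*,\lambda^*}]=1$ by construction. Your identity then gives $1/\lambda^*=c^*+(1-c^*)=1$, so $\lambda^*=1$. This is the intended one-line argument: it is the step $\E_{P^*}[\psi(\lambda^*/R)]=1$ in the proof of Theorem~\ref{thm:Uoptimal}, specialized to $\psi(y)=1/y$.

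If you want to keep an optimality-based route, repair your second idea as follows. Plug the trivial e-value $\e\equiv 1$ into the first-order condition $\E_Q[U'(\e^*)(\e-\e^*)]\le 0$ used in Lemma~\ref{lem:utility4.1Larsson}. Since $U'(\e^*)=\lambda^*p_{c^*}/q$, this reads $\lambda^*-1\le 0$. Combined with your bound $\lambda^*\ge 1$ from validity, it gives $\lambda^*=1$. It then follows a posteriori that both boundaries bind.
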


    \subsection{Comments on the proof of Theorem~\ref{thm:Uoptimal}}
        The proof of Theorem~\ref{thm:Uoptimal} rests on two STP$_3$-based ingredients. The first is a calibration result: it determines $(c,\lambda)$ by ensuring that $\e_{c,\lambda}$ has expectation exactly $1$ at both boundary points. The second is a shape result: it ensures that $x \mapsto \e_{c,\lambda}(x)$ has at most one local extremum, which is what allows boundary calibration to extend to the entire composite null.
        
        \begin{prp}\label{prp:determineconstantutility}
            There exists $(c^*,\lambda^*)\in(0,1)\times(0,\infty)$ such that $\E_{\Delta^-}[\e_{c^*,\lambda^*}(X)] = 1$ and $\E_{\Delta^+}[\e_{c^*,\lambda^*}(X)] = 1$.
        \end{prp}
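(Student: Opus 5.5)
Write $r_c(x) := \bigl(c\,p_{\Delta^-}(x)+(1-c)\,p_{\Delta^+}(x)\bigr)/q(x)$ and $I := (U')^{-1}$, which is strictly decreasing and continuous on the range of $U'$. Define
\[
F_\pm(c,\lambda) := \E_{\Delta^\pm}\bigl[I(\lambda r_c(X))\bigr].
\]
The plan is a two-stage continuity argument. First, for each fixed $c$, solve for $\lambda$ so that one boundary constraint holds. Then move $c$ to balance the other constraint via the intermediate value theorem.

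\textbf{Stage 1: solve for $\lambda$ at fixed $c$.} For fixed $c\in(0,1)$, the map $\lambda\mapsto F_+(c,\lambda)$ is strictly decreasing, since $I$ is strictly decreasing. It is continuous by dominated convergence. The domination comes from boundedness of $xU'(x)$: writing $e=I(\lambda r)$ gives $e\,\lambda r = eU'(e)\le B$, so $e \le B/(\lambda r_c)$. Then $p_{\Delta^+}\, e \le B\,q/\bigl(\lambda(1-c)\bigr)$, which is integrable.

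For the limits, $U'(e)\to 0$ as $e\to\infty$ gives $I(y)\to\infty$ as $y\downarrow 0$, so $F_+\to\infty$ as $\lambda\to 0$ by monotone convergence. The same bound gives $F_+\le B/(\lambda(1-c))\to 0$ as $\lambda\to\infty$. Hence there is a unique $\lambda(c)$ with $F_+(c,\lambda(c))=1$. By the implicit-function/monotonicity argument, $c\mapsto\lambda(c)$ is continuous on $(0,1)$.

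\textbf{Stage 2: balance the other constraint in $c$.} Set $g(c) := F_-(c,\lambda(c)) - 1$. This is continuous on $(0,1)$, and it remains to show that $g$ changes sign.

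As $c\to 0$, the mixture loses its $p_{\Delta^-}$ weight. The e-value then behaves like the one-sided optimizer against $p_{\Delta^+}$ alone, which is large where $p_{\Delta^-}/p_{\Delta^+}$ is large. Under STP$_2$/MLR, this makes its $P_{\Delta^-}$-expectation exceed $1$, so $g(c)>0$ near $0$. Concretely, for small $c$ the bound $e\le B/(\lambda c\,p_{\Delta^-}/q)$ is weak on the region where $p_{\Delta^-}\gg p_{\Delta^+}$.

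Symmetrically, as $c\to 1$ the constraint $F_+=1$ is enforced while the $p_{\Delta^+}$ weight vanishes. This forces $\lambda(c)$ to stay bounded away from $0$ relative to $p_{\Delta^-}$. Using $F_-\le B/(\lambda c)$, together with a comparison showing $F_-<F_+$ when $c$ is near $1$, gives $g(c)<0$.

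The cleanest route to both signs is the identity
\[
c\,F_-+(1-c)\,F_+=\E_{\nu}\bigl[(c\,p_{\Delta^-}+(1-c)\,p_{\Delta^+})\,e\bigr],
\]
combined with the observation that $F_--F_+=\int (p_{\Delta^-}-p_{\Delta^+})\,e\,d\nu$. One then shows this difference tends to a positive limit (or $+\infty$) as $c\to 0$ and to a negative value as $c\to 1$. The intermediate value theorem then yields $c^*$ with $g(c^*)=0$, and we take $\lambda^*=\lambda(c^*)$.

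\textbf{Main obstacle.} The sign of $g$ at the endpoints is the hard step, because the limits $c\to0,1$ may degenerate: $\lambda(c)$ could run to $0$ or $\infty$. I would first try to establish monotonicity of $c\mapsto F_-(c,\lambda)-F_+(c,\lambda)$ at fixed $\lambda$. Increasing $c$ shifts the denominator weight toward $p_{\Delta^-}$, which decreases $e$ more where $p_{\Delta^-}$ dominates. By MLR/STP$_2$ of the pair $(p_{\Delta^-},p_{\Delta^+})$, this makes the difference strictly decreasing in $c$.

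The endpoint signs would then follow by comparing with the degenerate cases $c\in\{0,1\}$, treated as one-sided problems. In those cases the constraint at the excluded boundary is violated in the known direction: this is the one-sided STP$_2$ optimality, and it uses that $q$ is a mixture over $(\Delta^-,\Delta^+)$, so $q/p_{\Delta^+}$ is decreasing and $q/p_{\Delta^-}$ is increasing. If the degenerate limits are ill-behaved, I would instead argue via a truncation of $U$ and pass to the limit.
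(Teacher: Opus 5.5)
Your Stage~1 is fine, but Stage~2 never establishes the decisive fact, a sign change of $g(c)=F_-(c,\lambda(c))-1$, and the tool you propose for it does not fit your ordering.

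Monotonicity of $c\mapsto F_-(c,\lambda)-F_+(c,\lambda)$ holds at \emph{fixed} $\lambda$. It needs no MLR: for $c<c'$, $\operatorname{sgn}\bigl(I(\lambda r_c)-I(\lambda r_{c'})\bigr)=\operatorname{sgn}(p_{\Delta^-}-p_{\Delta^+})$ pointwise. Along your curve, however, $\lambda=\lambda(c)$ moves with $c$, so this monotonicity says nothing about the sign of $g$.

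The comparison with a degenerate one-sided problem also fails as $c\to1$. Your only bound there, $F_+\le B/(\lambda(1-c))$, blows up. Moreover, nothing excludes $\int p_{\Delta^+}I(\lambda p_{\Delta^-}/q)\,d\nu=\infty$ for all $\lambda$, since $p_{\Delta^+}/p_{\Delta^-}$ may grow fast relative to $q$. In that case Fatou forces $\lambda(c)\to\infty$, so there is no limiting problem to compare with. Your sentences about $\lambda(c)$ staying bounded and about ``a comparison showing $F_-<F_+$ near $c=1$'' are exactly what must be proved, and the truncation fallback is not spelled out.

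The $c\to0$ end does work along your lines. $F_+$ is jointly continuous on $[0,\tfrac12]\times(0,\infty)$, dominated by $2Bq/\underline{\lambda}$ for $\lambda\ge\underline{\lambda}$. Hence $\lambda(c)\to\lambda_0$ with $\int p_{\Delta^+}I(\lambda_0p_{\Delta^+}/q)\,d\nu=1$. Fatou plus strict MLR for the decreasing function $I(\lambda_0p_{\Delta^+}/q)$ then give $\liminf_{c\downarrow0}g(c)>0$.

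The paper avoids the asymmetry by reversing your order. It fixes $\lambda$ and solves $F_-(c,\lambda)=F_+(c,\lambda)$ in $c$, using precisely the fixed-$\lambda$ monotonicity above. The endpoint signs come from Fatou/dominated convergence toward $I(\lambda p_{\Delta^\pm}/q)$ and strict MLR, with no degeneracy since $\lambda$ is frozen. It then tunes $\lambda$ through $m(\lambda)=\int I(\lambda p_{c(\lambda)}/q)\,p_{c(\lambda)}\,d\nu$. This is at most $B/\lambda$ uniformly in $c$, because the integrating weight is the same mixture that sits inside $I$, and it tends to $\infty$ as $\lambda\downarrow0$ by Fatou.

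To keep your ordering instead, symmetrize. For $c\in[\tfrac12,1]$, let $\tilde\lambda(c)$ solve $F_-(c,\tilde\lambda(c))=1$; this is well behaved up to $c=1$, with limit $\tilde\lambda_1\in(0,\infty)$. Fatou plus strict MLR for the increasing function $I(\tilde\lambda_1p_{\Delta^-}/q)$ give $F_+(c,\tilde\lambda(c))>1=F_+(c,\lambda(c))$ for $c$ near $1$. Hence $\tilde\lambda(c)<\lambda(c)$, and so $g(c)<F_-(c,\tilde\lambda(c))-1=0$.
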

        
        \begin{prp}\label{prp:unimodality}
        For every $c\in(0,1)$ and $\lambda>0$, $x \mapsto \e_{c,\lambda}(x)$ has at most one local extremum.
        \end{prp}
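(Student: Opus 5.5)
The plan is to reduce the shape of $x \mapsto \e_{c,\lambda}(x)$ to the shape of the boundary-mixture ratio, and then to control that ratio with the variation-diminishing property of STP$_3$ kernels \citep{karlin1968total, brown1981variation}. Since $U$ is strictly concave and continuously differentiable, $U'$ is strictly decreasing, and so is $(U')^{-1}$. Hence $\e_{c,\lambda}(x) = (U')^{-1}(\lambda\, r_c(x))$ with
\[
r_c(x) := \frac{c\,p_{\Delta^-}(x)+(1-c)\,p_{\Delta^+}(x)}{q(x)}.
\]
Local maxima of $\e_{c,\lambda}$ are therefore exactly local minima of $r_c$, and vice versa, and $\lambda>0$ plays no role. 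It suffices to show that $r_c$ is quasi-convex on $\cX$ in the strict sense that it is strictly decreasing and then strictly increasing. One of these two monotone pieces may be empty. Such a function has at most one local extremum, namely its minimum.

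To show this, I will fix an arbitrary level $t>0$ and study the sign of
\[
g_t(x) := c\,p_{\Delta^-}(x)+(1-c)\,p_{\Delta^+}(x) - t\,q(x) = \int p_\mu(x)\, d\sigma_t(\mu).
\]
Here $\sigma_t := c\,\delta_{\Delta^-} - t\,w + (1-c)\,\delta_{\Delta^+}$ is a signed measure. Because $w$ is supported on $(\Delta^-,\Delta^+)$, the measure $\sigma_t$, read from left to right in $\mu$, has sign pattern $+,-,+$, so it has at most two sign changes. By Karlin's variation-diminishing theorem for STP$_3$ kernels, $x \mapsto g_t(x)$ has at most two sign changes. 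In fact the strict version gives at most two zeros counting multiplicity. Moreover, if it has exactly two, they occur in the same order $+,-,+$.

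Consequently, for every $t$ the sublevel set $\{x : r_c(x) < t\} = \{x : g_t(x) < 0\}$ is an interval, possibly empty. This uses $q>0$ on the relevant support. Intervals as sublevel sets for all $t$ give quasi-convexity of $r_c$. The zero-counting form of the STP$_3$ statement rules out plateaus: a level set $\{r_c = t\}$ containing an interval would give $g_t$ infinitely many zeros. So $r_c$ is strictly decreasing and then strictly increasing, which transfers to $\e_{c,\lambda}$ being strictly increasing and then strictly decreasing. This yields at most one local extremum.

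The main obstacle I expect is the rigorous use of the variation-diminishing theorem in this generality. The mixing measure $\sigma_t$ has a diffuse negative part $-t\,w$ and not just point masses, so I would first prove the statement for finitely supported $w$ using the determinantal definition in Appendix~\ref{app:tp3counterexample}. I would then pass to general $w$ by a limiting argument, which preserves the bound on sign changes. Other points needing care are: points of $\cX$ where $q(x)=0$, where I would restrict to the support of $Q$ or set $r_c=\infty$ there; the case that $\cX$ is not an interval, where "local extremum" is interpreted relative to $\cX$; and the step from a bound on sign changes to a bound on zeros, where strictness (STP rather than TP) is essential.
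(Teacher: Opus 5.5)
Your strategy matches the paper's. You reduce $\e_{c,\lambda}$ to the level crossings of the $(+,-,+)$ combination $g_t=c\,p_{\Delta^-}-t\,q+(1-c)\,p_{\Delta^+}$ and apply the STP$_3$ variation-diminishing property. The gap is in the step you flag yourself. Approximating $w$ by finitely supported measures and passing to a pointwise limit preserves only the \emph{weak} count $S^-(g_t)\le 2$ and the $+,-,+$ order. It does not preserve the strict count $S^+(g_t)\le 2$ or the bound of two zeros. For example, the vectors $(1,\tfrac1n,-\tfrac1n,\tfrac1n,1)$ have $S^+=2$ for every $n$, yet their limit $(1,0,0,0,1)$ has $S^+=4$. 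In determinantal terms, strictly positive $3\times3$ minors only converge to nonnegative ones. The strict count is exactly what you use to exclude plateaus. It is also exactly what the paper means by ``at most one local extremum'', namely $S^+(\e_{c,\lambda}-\gamma)\le 2$ for all $\gamma$. So for diffuse $w$, your argument as proposed only gives quasi-concavity of $\e_{c,\lambda}$ with possible flat stretches.

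The missing idea is Lemma~\ref{lem:stp3mixture} in the paper, and it makes the approximation unnecessary. Absorb $w$ into $q$ and observe that the three functions $(p_{\Delta^-},q,p_{\Delta^+})$ are themselves STP$_3$. Every minor involving the $q$-row is linear in that row, so it equals the $w$-integral of the corresponding minor with $p_\mu$ in place of $q$, which is strictly positive. Then $g_t$ restricted to any finite $V$ is $G_V(c,-t,1-c)^\top$ for a three-column STP$_3$ matrix. The finite-dimensional SVR$_3$ bound gives $S^+(g_t)\le 2$ directly, and a third zero is impossible because the $3\times3$ determinant does not vanish. Citing the strict variation-diminishing theorem for integral kernels with the measure $\delta_{\Delta^-}+w+\delta_{\Delta^+}$ would also work, since its proof is the same block computation.

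Two smaller points. First, ``zeros counting multiplicity'' presupposes differentiability and extended total positivity, which are not assumed; state the bound as $S^+(g_t)\le 2$ instead. Second, on a discrete $\cX$ your conclusion ``strictly increasing, then strictly decreasing'' can fail through a two-point flat top. Take an exponential family on $\{1,2,3,4\}$, let $w$ be a point mass, and choose $c$ so that $r_c(2)=r_c(3)$. This is another reason to phrase the conclusion via $S^+$. Your sign-order refinement, showing that the extremum is a peak, is correct and slightly stronger than what the paper proves.
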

        
        \begin{rmk}[Weakening to STP$_2$]
            The STP$_3$ assumption in Theorem~\ref{thm:Uoptimal} cannot, in general, be weakened to STP$_2$ (MLR). In particular, there are STP$_2$ families for which the likelihood ratio $\e_c$
            with $c$ chosen to satisfy the boundary size conditions is not a valid e-variable for the full composite null. A counterexample is given in Appendix~\ref{app:tp3counterexample}: STP$_2$ holds but STP$_3$ fails, and the resulting candidate violates e-value validity at another null parameter value.
        \end{rmk}

    \section{TOST and Universal Inference}\label{sec:TOST-E}

        \subsection{The TOST-E}
            Perhaps the most widely used test for equivalence testing is the Two One-Sided Tests (TOST) procedure \citep{schuirmann1981hypothesis, schuirmann1987comparison}.
            As the name suggests, the idea is to test both $ H_0^L:\mu\le \Delta^-$ and $H_0^R:\mu\ge \Delta^+$ at level $\alpha$, and reject the equivalence null $H_0^{(\Delta^-, \Delta^+)}$ if both are rejected.

            Generalizing from tests to e-values, we introduce the TOST-E procedure based on two one-sided e-values $\e^L$ and $\e^R$ for $H_0^L$ and $H_0^R$.
            Since e-values produce a continuous amount of evidence, the natural generalization is to take their minimum: $  \e^{\mathrm{TOST}}(x) := \min\{\e^L(x),\e^R(x)\}$.
            As $\e^L$ and $\e^R$ are valid for $H_0^L$ and $H_0^R$, this minimum is indeed valid for their union $H_0^{(\Delta^-, \Delta^+)}$.

            In the expected-utility framework, a natural choice for the one-sided e-values is
            $\e^L(x) := (U')^{-1}(p_{\Delta^-}(x)/q(x))$ and
            $\e^R(x) := (U')^{-1}(p_{\Delta^+}(x)/q(x))$.
            Under STP$_2$ (MLR), these are valid for $H_0^L$ and $H_0^R$, so that their combined TOST-E is valid under STP$_2$.

            \begin{rmk}[Comparison log-optimal and TOST-E]
                Taking $U = \log$, the TOST-E can be written as $\e^{\text{TOST}}(x) =  q(x) / \max\{p_{\Delta^-}(x), p_{\Delta^+}(x)\}$.
                Comparing this to the log-optimal e-variable $\e^{\text{log}}(x) = q(x) / (c^*p_{\Delta^-}+(1-c^*)p_{\Delta^+})$, we find $\e^{\text{log}}(x) \geq \e^{\text{TOST}}(x)$ for every $x \in \mathcal{X}$, since $\max\{p_{\Delta^-}(x), p_{\Delta^+}(x)\} \geq c^*p_{\Delta^-}+(1-c^*)p_{\Delta^+}$.
                As a result, the log-optimal e-value dominates the TOST-E, at the cost of requiring STP$_3$ over STP$_2$.
            \end{rmk}

    \subsection{Universal inference}
        If $(P_\mu)_{\mu \in \cM}$ is not STP$_2$, an option is to rely on (a generalization of) Universal Inference \citep{wasserman2020universal}, which is valid without assumptions on the model $(P_\mu)_{\mu \in \cM}$.

        Instead of taking the minimum of two one-sided e-values for the boundary hypotheses $H_0^L : \mu \leq \Delta^-$ and $H_0^R : \mu \geq \Delta^+$, the (generalized) Universal Inference approach takes the essential infimum\footnote{The greatest \emph{measurable} lower bound. See Appendix A.2 in \citet{ramdas2022admissible}.} $\text{essinf}_{\mu \in \cM_0} \e_\mu$ over a collection of e-values $(\e_\mu)_{\mu \in \cM_0}$ where $\e_\mu$ is valid for the hypothesis $\{P_\mu\}$ and $\cM_0 = \{\mu : P_\mu \in H_0^{\Delta^-, \Delta^+}\}$.
        While this approach may seem overly conservative, every valid e-value can be interpreted as a special case of generalized Universal Inference (see e.g. Proposition 3 in \citet{koning2025sequentializing}).

        \begin{rmk}[Comparison TOST-E and UI]
            The name Universal Inference is usually reserved for an infimum over log-optimal / likelihood ratio e-values $\e_\mu = q(x) / p_\mu(x)$.
            The resulting e-value $\e^{\text{UI}} = \textnormal{essinf}_{\mu \in \cM_0} q(x) / p_\mu(x) = q(x) / \textnormal{esssup}_{\mu \in \cM_0} p_\mu(x)$ is dominated by its TOST-E counterpart $q(x) / \max\{p_{\Delta^-}, p_{\Delta^+}\}$, since $\textnormal{esssup}_{\mu \in \cM_0} p_\mu(x) \geq \max\{p_{\Delta^-}, p_{\Delta^+}\}$.
        \end{rmk}

    \section{Examples: $z$-test \& $t$-test}\label{sec:examples_z_t}
    % We illustrate our optimal e-values in the Gaussian location setting.

% We now illustrate the preceding constructions in the two canonical Gaussian settings. We first consider the fixed-variance $z$-test, where the symmetric-margin numeraire is available in closed form. We then consider the unknown-variance $t$-test, where the situation differs: for inference on the raw mean $\mu$, a direct numeraire construction is generally unavailable, while for standardized effect sizes one can recover a numeraire-type likelihood-ratio approach.

        \subsection{$z$-test}
            Let $X_1, \dots, X_n \overset{iid}{\sim} \mathcal{N}(\mu,\sigma^2)$ with known variance $\sigma^2$.  
            We fix the margins $\Delta^- < \Delta^+$ and consider the hypotheses $H_0^{(\Delta^-,\Delta^+)}:\ \mu\le \Delta^-$ or $\mu \ge \Delta^+$ against a mixture alternative $Q$ on $\mu \in (\Delta^-,\Delta^+)$ with mixing distribution $w$.
            Let $v:=\sigma^2/n$, write $\bar X$ for the sample mean, and let $\varphi_v(\cdot)$ denote the density of $\mathcal N(0,v)$. Then the alternative density is given by $q(\bar x)=\int_{\Delta^-}^{\Delta^+}\varphi_v(\bar x-\mu)\,w(d\mu).$
            By Corollary~\ref{cor:logoptimal}, the log-optimal e-value is
            \[
            \e_{c^*}(\bar X)
            =
            \frac{q(\bar X)}
            {c^*\,\varphi_v(\bar X-\Delta^-)+(1-c^*)\,\varphi_v(\bar X-\Delta^+)},
            \]
            where $c^*\in(0,1)$ is uniquely determined by
            $\E_{\Delta^-}[\e_{c^*}(X)] = \E_{\Delta^+}[\e_{c^*}(X)]=1$.
            
            \begin{rmk}
                If $w$ is symmetric around $(\Delta^-+\Delta^+)/2$ then $c^*=1/2$.
            \end{rmk}

        \subsection{$t$-test}\label{sec:ttest}
            Consider $X_1,\ldots,X_n \stackrel{iid}{\sim}\mathcal N(\mu,\sigma^2)$ with unknown $\sigma^2$.
            Here, we should distinguish two objectives: assessing equivalence of the mean $\mu$ or of the standardized effect size $\delta := \mu / \sigma$. 
            Empirically, both targets are common: mean-scale margins are standard in bioequivalence and other applications with natural measurement units \citep{westlake1976symmetrical, schuirmann1987comparison}, whereas standardized-effect margins are widely used when unit-free comparability across studies or outcomes is the primary goal \citep{lakens2017equivalence}.

%\paragraph{Equivalence on the mean scale.}
%With unknown $\sigma$, this is a two-parameter problem, so the one-parameter STP$_3$ route from Section~\ref{sec:optimalSTP3} does not apply directly. A practical choice is therefore applying TOST-E on $\mu$.

\paragraph{Equivalence on the mean scale.}
With unknown $\sigma$, a valid e-value $\e$ must satisfy
$\sup_{|\mu|\ge\Delta,\ \sigma>0}\mathbb{E}_{\mu,\sigma}[\e]\le 1$.
Even after reduction to sufficient statistics, $\sigma$ remains a nuisance, so enforcing this uniformly is typically conservative. TOST-E addresses this by splitting $H_0$ into two one-sided components.
For the right component, with $\delta^+:=(\mu-\Delta^+)/\sigma$, the set
$\{(\mu,\sigma):\mu\ge\Delta^+,\ \sigma>0\}$ becomes the one-sided hypothesis $\delta^+\ge 0$, testable by one-sided $t$-based e-values (and similarly on the left).

For this purpose, define one-sided $t$-statistics
$T_L:=\sqrt{n}(\bar X-\Delta^-)/S$ and
$T_R:=\sqrt{n}(\Delta^+-\bar X)/S$,
with $\nu=n-1$.
Let $f_{0,\nu}$ be the central $t_\nu$ density, and let $q_L,q_R$ be one-sided alternative densities for $T_L,T_R$ (e.g. noncentral-$t$ mixtures on positive noncentralities). Then
$
\e_\mu^L(X):=q_L(T_L)/f_{0,\nu}(T_L),$ and $
\e_\mu^R(X):=q_R(T_R)/f_{0,\nu}(T_R),
$
which yields the TOST-E:
\[
\e_\mu^{\mathrm{TOST}}(X):=\min\{\e_\mu^L(X),\e_\mu^R(X)\}.
\]

\paragraph{Equivalence on the standardized effect size scale.}
Here the nuisance scale is absorbed by standardization, and equivalence assessment reduces to a one-parameter problem in $\delta$.
Letting $f_\delta$ denote the density of $T_n:=\sqrt{n}\bar X/S$ under effect size $\delta$ (noncentral $t$ with noncentrality $\sqrt{n}\,\delta$), and $w$ be a mixing distribution on $(\Delta^-,\Delta^+)$, define
$
q(t):=\int_{\Delta^-}^{\Delta^+} f_u(t)\,w(du).
$
The corresponding boundary-mixture form is
\[
\e_{c^*}(t):=
\frac{q(t)}{c^*\,f_{\Delta^-}(t)+(1-c^*)\,f_{\Delta^+}(t)},
\]
with $c^*$ calibrated by the two boundary equalities.
%For symmetric margins and symmetric $w$, this yields $c^*=1/2$.

\paragraph{Plotting equivalence curves.}
An illustration of several methods to assess equivalence on the mean $\mu$ in the Gaussian setting is given in Figure~\ref{fig:zt_tost_compare} for a single data realization.
The left panel reports the equivalence curves $\alpha \mapsto \widehat{\Delta}_\alpha$ and the right panel the corresponding curves of e-values $\Delta \mapsto \e_\Delta$.

\begin{figure}[H]
    \centering
    \includegraphics[width=0.95\linewidth]{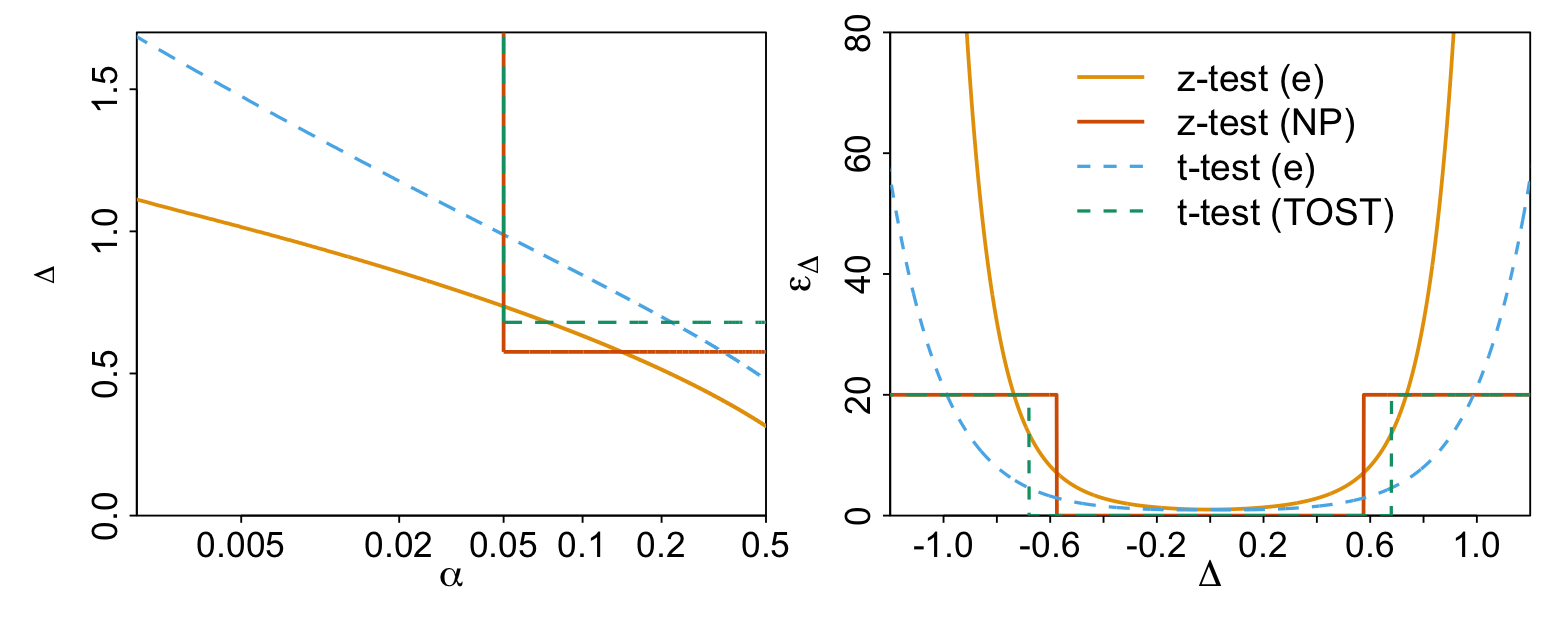}
    \vspace{-0.15cm}
    \caption{Comparison of four procedures to assess equivalence on the mean $\mu$ in the Gaussian location model. Left: positive half of equivalence curves $\alpha \mapsto \widehat{\Delta}_\alpha$. Right: curves of e-values $\Delta \mapsto \e_\Delta$. 
    The setup for the displayed realization is: $\bar X=0.05$, $n=40$ and $\sigma=1$.}
    \label{fig:zt_tost_compare}
\end{figure}
    \vspace{-0.8cm}

    \section{Sequential equivalence assessment}\label{sec:seqtesting}

        \subsection{(Adaptive) anytime validity}
            In many applications of equivalence assessment, data arrives sequentially: data may arrive over time within a study, or we may observe a sequence of studies on the same hypothesis.
            For this purpose, we consider the sequential generalization of the e-value: the \emph{e-process}.
    
            \begin{dfn}[E-process]
                Consider the sample space $(\mathcal{X}, \mathcal{F})$ equipped with the filtration $(\mathcal F_t)_{t \geq 0}$, $\mathcal{F}_t \subseteq \mathcal{F}$, $t \geq 0$.
                We say that $(\e^t)_{t \geq 0}$ is an e-process if it is adapted to $(\mathcal F_t)_{t \geq 0}$ and $\e^t$ is $[0, \infty]$-valued.
            \end{dfn}

            E-processes come in two main flavors: anytime valid and adaptively anytime valid.
            Plain anytime validity may be interpreted as assuming that the data-generating process is fixed at the start.
            Adaptive anytime validity is a more stringent condition, and allows the data-generating process to adaptively change over time.
            
            \begin{dfn}[Anytime validity]
                An e-process $(\e^t)_{t \geq 0}$ is anytime valid for hypothesis $H$ if $\e^\tau$ is a valid e-value for $H$, for every stopping time $\tau$ adapted to $\mathbb{F}$.
            \end{dfn}
    
            \begin{dfn}[Adaptive anytime validity]
                An e-process is adaptively anytime valid for hypothesis $H$ if $\E^P[\e^\tau \mid \mathcal{F}_\sigma] \leq \e^\sigma$, for every $P \in H$ and every pair of stopping times $\sigma \leq \tau$ adapted to $\mathbb{F}$, and $\e^0 = 1$.
            \end{dfn}
    
            \begin{rmk}
                The term `e-process' is often reserved for what we call an anytime valid e-process.
                An adaptively anytime valid e-process is often called a `test supermartingale' \citep{shafer2011test, ramdas2022admissible, ramdas2023game}. 
            \end{rmk}
    
            Throughout this section, we focus on adaptively anytime valid e-processes.
            The main reason is that these are easier to construct, by relying on Proposition \ref{prp:test_martingale}.
            Indeed, it suffices to construct a conditional e-value $\e_s$ at every point in time $s \geq 1$, and subsequently construct the e-process by tracking their running product $\e^t = \prod_{s = 0}^t \e_s$, $\e_0 = 1$. The proof can be found in Appendix \ref{app:proofprptestmart}.
            
            \begin{dfn}[Conditional e-value]
                Let $\Sigma \subseteq \mathcal{F}$ be a sub-$\sigma$-algebra of $\mathcal{F}$.
                We say an e-value $\e$ is $\Sigma$-conditionally valid for hypothesis $H$ if $\E^P[\e \mid \Sigma] \leq 1$ a.s., for every $P \in H$.
            \end{dfn}
            
            \begin{prp}\label{prp:test_martingale}
                Let $\e_0 = 1$.
                $(\e^t)$ is adaptively anytime valid if and only if $\e^t = \prod_{s = 0}^t \e_s$, with $\mathcal{F}_{s - 1}$ conditionally valid and $\mathcal{F}_s$-measurable e-values $\e_s$, $s \geq 1$.
            \end{prp}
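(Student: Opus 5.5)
The plan is to prove the two directions separately, working directly with the running product and the tower property of conditional expectation.

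For the ``if'' direction, let $\e^t = \prod_{s=0}^t \e_s$ with $\e_0 = 1$, where each $\e_s$ is $\mathcal{F}_s$-measurable and $\mathcal{F}_{s-1}$-conditionally valid for $H$. Then $\e^0 = 1$, and $(\e^t)$ is adapted. For every $P \in H$,
\[
\E^P[\e^t \mid \mathcal{F}_{t-1}] = \e^{t-1}\,\E^P[\e_t \mid \mathcal{F}_{t-1}] \leq \e^{t-1},
\]
so $(\e^t)$ is a non-negative $P$-supermartingale. Pulling the $\mathcal{F}_{t-1}$-measurable factor $\e^{t-1}$ out of the conditional expectation is legitimate for $[0,\infty]$-valued variables with the convention $0 \cdot \infty = 0$.

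I will then upgrade this from deterministic times to stopping times $\sigma \leq \tau$ via optional stopping for non-negative supermartingales: $\E^P[\e^\tau \mid \mathcal{F}_\sigma] \leq \e^\sigma$. When $\tau$ is bounded, this is the standard discrete optional sampling theorem. For unbounded $\tau$, I apply it to $\tau \wedge n$, then use conditional Fatou with $\e^\tau := \liminf_n \e^{\tau\wedge n}$ on $\{\tau = \infty\}$. The limit exists almost surely by the supermartingale convergence theorem.

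For the ``only if'' direction, given an adaptively anytime valid $(\e^t)$, I define $\e_s := \e^s / \e^{s-1}$, with the convention $0/0 := 0$; $\e_s$ takes values in $[0,\infty]$. Each $\e_s$ is $\mathcal{F}_s$-measurable, and on $\{\e^{s-1} > 0\}$ we have $\e^t = \prod_{s \le t} \e_s$. Once $\e^{s-1} = 0$, taking $\sigma = s-1$ and $\tau = s$ gives $\E^P[\e^s \mid \mathcal{F}_{s-1}] \leq 0$ on that set, hence $\e^s = 0$ almost surely there. So the product representation holds almost surely, and it can be made exact by modifying on a null set, or by setting $\e_s := 0$ there.

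Conditional validity follows from the same choice $\sigma = s-1$, $\tau = s$. On $\{0 < \e^{s-1} < \infty\}$, we get $\E^P[\e_s \mid \mathcal{F}_{s-1}] = \E^P[\e^s \mid \mathcal{F}_{s-1}]/\e^{s-1} \leq 1$. On $\{\e^{s-1} = 0\}$, $\e_s = 0$. The set $\{\e^{s-1} = \infty\}$ is $P$-null, since $\E^P[\e^{s-1}] \leq \e^0 = 1$.

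I expect the main obstacle to be the handling of $\infty$ and $0$ values together with unbounded stopping times. The algebraic identities are trivial; the care goes into the null-set modifications and the Fatou limiting argument.
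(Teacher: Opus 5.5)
Your proposal is correct and follows the paper's proof. The paper likewise reduces the product representation to the one-step supermartingale property and applies optional sampling for the ``if'' direction, and uses the deterministic stopping times $\sigma = s-1$, $\tau = s$ with $\e_s = \e^s/\e^{s-1}$ for the ``only if'' direction. The differences are bookkeeping only: you set $\e_s = 0$ instead of the paper's $\e_s = 1$ on $\{\e^{s-1} = 0\}$, and you spell out the Fatou extension to unbounded stopping times that the paper delegates to a cited optional sampling theorem. In both versions the product identity holds only almost surely under each $P \in H$, since no choice of $\e_s$ repairs it on the null event $\{\e^{s-1} = 0 < \e^s\}$.
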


        \subsection{One-sided hypothesis}\label{sec:one-sided_sequential}
            We start with the one-sided setting $H_0^\Delta : \mu \geq \Delta$.
            Let $T_s$ denote a real-valued sufficient statistic for $\mu$ at time $s$.
            Here, the (log-optimal) likelihood ratio e-process becomes
            \[
            \e^t_{\Delta}
            =
            \prod_{s=1}^t
            \frac{q\!\left(T_s \mid \mathcal F_{s-1}\right)}
                 {p_{\Delta}\!\left(T_s \mid \mathcal F_{s-1}\right)},
            \]
            Collecting such e-processes over $\Delta$ yields a curve of e-processes $\Delta \mapsto (\e_\Delta^t)_{t \geq 0}$.
            
            Under a suitable STP$_2$ (MLR) condition in $T_s$, this e-process is adaptively anytime valid \citep{grunwald2025supermartingales}.
            \citet{koning2025sequentializing} extend the argument %of \citet{grunwald2025supermartingales} 
            beyond log-optimal e-values, showing that it suffices that $\e^t$ is monotone in the sufficient statistic.

\subsection{Two-sided symmetric hypotheses}

We now consider equivalence margins symmetric around zero, with margin $\Delta_S>0$. Specifically, we test
$H_0^S:\mu\le -\Delta_S$ or $\mu\ge \Delta_S$
against
$H_1^S:-\Delta_S<\mu<\Delta_S$.
This can be recast as a hypothesis on $\mu^2$:
\begin{equation*}\label{eq:squaredhypothesis_mu}
H_0^S:\ \mu^2\ge \Delta_S^2
\quad\text{vs.}\quad
H_1^S:\ \mu^2<\Delta_S^2.
\end{equation*}
Thus, the symmetric problem falls within the one-sided framework of the previous subsection, with boundary $\mu^2=\Delta_S^2$. We therefore construct boundary-calibrated one-sided conditional e-values for $\mu^2$ and track their running product $\e^t$, yielding an adaptively anytime-valid e-process. This can also be interpreted as an invariance-based reduction \citep{perez2024statistics}, since $\mu^2$ is invariant under $\mu\mapsto-\mu$. We apply this first to the symmetric $z$-test and then to the symmetric $t$-test.

\paragraph{Symmetric $z$-test}
Suppose $X_1,\ldots,X_n \stackrel{iid}{\sim}\mathcal N(\mu,1)$. By \citet{hall1965relationship}, the statistic $T_n:=\bar X_n^2$ is (invariantly) sufficient for $\mu^2$, and
$
T_n \sim \frac{1}{n}\chi^2_{1,n\mu^2}.
$
Let $p_{n,a}$ denote the density of $T_n$ under $\mu^2=a$, and let $q_n$ be any mixture of $(p_{n,a})_{a\in[0,\Delta_S^2]}$. Consider
\[
\e^n_{\Delta_S}:=\frac{q_n(T_n)}{p_{n,\Delta_S^2}(T_n)}.
\]
Because $(p_{n,a})_{a\ge 0}$ satisfies STP$_2$ (MLR) in $T_n$, this is an adaptively anytime valid e-process for the composite null $H_0^S$ by Theorem 4 of \cite{grunwald2025supermartingales}. Moreover, under Corollary 3 of \citet{perez2024statistics}, this e-statistic is log-optimal \citep{grunwald2024safe}.

\paragraph{Symmetric $t$-test}
We now return to Section~\ref{sec:ttest}, where $X_1,\ldots,X_n \stackrel{iid}{\sim}\mathcal N(\mu,\sigma^2)$ with unknown $\sigma^2$ and equivalence is assessed on the effect size $\delta=\mu/\sigma$. We consider the symmetric null
$H_0^S:\delta\le -\Delta_S$ or $\delta\ge \Delta_S$ (equivalently, $\delta^2\ge \Delta_S^2$).
Define
\[
T_n^2:=\frac{n\bar X_n^2}{S_n^2},
\qquad
S_n^2:=\frac{1}{n-1}\sum_{i=1}^n (X_i-\bar X_n)^2.
\]
To justify sufficiency for $\delta^2$, we follow \citet{hall1965relationship}. Let the group act by sign/scale transformations, $g_a(x_1,\ldots,x_n)=(ax_1,\ldots,ax_n)$ with $a\in\mathbb R\setminus\{0\}$, and define the maximal invariant by ratios $U_i=X_i/X_1$ (equivalently $U=(U_2,\ldots,U_n)$, with $U_1=1$). This invariant is unchanged under both sign changes and scaling. Since this group admits a Haar measure, Assumption A of \citet{hall1965relationship} is satisfied, so Stein’s theorem applies and implies that $T_n^2$ is (invariantly) sufficient for $\delta^2$.

Under $\delta^2=a$, \(T_n^2\sim F_{1,n-1,na}\); write $p^F_{n,a}$ for its density. Let $q_n$ be any mixture of $(p^F_{n,a})_{a\in[0,\Delta_S^2)}$, and define
\[
\e^n_{\Delta_S}:=\frac{q_n(T_n^2)}{p^F_{n,\Delta_S^2}(T_n^2)}.
\]
This is an adaptively anytime valid e-process for $H_0^S$, as the noncentral $F$ family is STP$_\infty$ in its noncentrality parameter \citep{brown1981variation}.
Appendix~\ref{app:comparetostesymt} provides simulation evidence showing that $\e^n_{\Delta_S}$ is more powerful than the sequential TOST-E.

\subsection{General two-sided testing}

We now return to the general (possibly asymmetric) equivalence problem with boundaries $\Delta^-<\Delta^+$, i.e. $H_0^{\Delta^-, \Delta^+} : \mu\le \Delta^-$ or $\mu\ge \Delta^+$. In contrast to the symmetric case, there is no direct reduction through sign invariance. We discuss three possible approaches here to construct adaptively anytime valid e-processes. 

\paragraph{Universal inference}
Let $\mathcal{M}_0 := \{\mu : \mu \leq \Delta^- \text{ or } \mu \geq \Delta^+\}$.
For each $\mu_0 \in \mathcal{M}_0$, let $(\e_{\mu_0}^t)_{t \geq 0}$ be an
anytime-valid e-process for the simple null hypothesis $P_{\mu_0}$, and define $\e_{\mathrm{UI}}^t := \text{ess\,inf}_{\mu_0 \in \mathcal{M}_0} \e_{\mu_0}^t$.
For every stopping time $\tau$ and every $\mu_0 \in \mathcal{M}_0$, $
    \mathbb{E}_{\mu_0}[\e_{\mathrm{UI}}^\tau]
    \leq \mathbb{E}_{\mu_0}[\e_{\mu_0}^\tau] \leq 1,$
so $(\e_{\mathrm{UI}}^t)_{t \geq 0}$ is anytime valid for $H_0^{\Delta^-, \Delta^+}$.

\paragraph{TOST-E}
Let $(\e_L^t)_{t\ge 0}$ and $(\e_R^t)_{t\ge 0}$ be anytime-valid e-processes for
$H_0^L:\mu\le \Delta^-$ and $H_0^R:\mu\ge \Delta^+$, respectively (see Section \ref{sec:one-sided_sequential}).
Define $\e_{\mathrm{TOST}}^t:=\min\{\e_L^t,\e_R^t\}$.
Fix any stopping time $\tau$ and $\mu_0\in\mathcal M_0$.
If $\mu_0\le \Delta^-$, then $\e_{\mathrm{TOST}}^\tau\le \e_L^\tau$, so
$\E_{\mu_0}[\e_{\mathrm{TOST}}^\tau]\le \E_{\mu_0}[\e_L^\tau]\le 1$.
Applying the same argument for $\mu \geq \Delta^+$ yields anytime validity of  $(\e_{\mathrm{TOST}}^t)_{t\ge 0}$ for $H_0^{\Delta^-, \Delta^+}$. 

\paragraph{Multiplying numeraires}
Under STP$_3$, one-step numeraires have a boundary-mixture form. This leads to a natural sequential construction:
\(
\e_s^{\mathrm{mix}}=q_s/\{c_s p_{\Delta^-,s}+(1-c_s)p_{\Delta^+,s}\}
\)
and
\(
\e_{\mathrm{mix}}^t=\prod_{s=1}^t \e_s^{\mathrm{mix}}.
\)
Under conditional validity, this yields adaptive anytime validity. Appendix~\ref{app:comparetostenum} indicates that, in our settings, sequential TOST-E is typically more powerful over time than the multiplied-numeraire process.

\section{Discussion}\label{sec:discussion} 
    We start by emphasizing that we do not believe there is anything inherently \emph{wrong} with classical fixed-$\Delta$ equivalence tests, nor with how they are interpreted.
    Instead, our main point is that they are simply not as \emph{useful} as the alternatives that we present, when viewed through the lens of decision making.
    Indeed, we believe statistical procedures should be compared based on the guarantees they can provide to decision making, and that one should be careful to not automatically cast every problem into a testing problem.

    Our key policy recommendation is that regulators should stop demanding a test outcome with respect to a fixed margin $\Delta$.
    Indeed, our arguments show that regulators should prefer receiving a data-dependent margin $\widehat{\Delta}_\alpha$, unless their loss hinges on such a fixed $\Delta$.
    A uniformly valid equivalence curve $\alpha \mapsto \widehat{\Delta}_\alpha$ offers even more flexibility in decision making.

    For two-sided equivalence assessment, we find that it is perhaps easier to report the curve of e-values $(\Delta^-, \Delta^+) \mapsto \e_{\Delta^-, \Delta^+}$.
    The proposal to report such a curve of e-values as evidence echoes recent calls in the e-value literature to report an e-value against each option of an unknown quantity \citep{grunwald2023posterior, koning2025fuzzy, koning2025sequentializing}.

    An open question is how to directly express the optimality of equivalence tests and e-values to their corresponding data-dependent margins and equivalence curves.

\subsection*{Data availability}\vspace{-0.2cm}
The figures may be replicated by running the R code available at the repository \url{https://github.com/StanKoobs/E-quivalenceTesting}.

{
\singlespacing
\small
\bibliographystyle{plainnat}
\bibliography{references}
}
\appendix

\section{Proofs for Section \ref{sec:data-dependent_margins}}

    We start with a technical measurability lemma.

    \begin{lemma}[Measurability of inversion]\label{lem:invert_threshold}
        Let $(\cX, \mathcal{A})$ and $(\mathcal{Y}, \mathcal{B})$ be measurable spaces.
        Let $g : \cX \times \mathbb{R}_+ \to [-\infty, \infty]$ satisfy:
        \begin{itemize}
            \item for every $\Delta \geq 0$, the map $x \mapsto g(x, \Delta)$ is $\mathcal{A}$-measurable,
            \item for every $x \in \cX$, the map $\Delta \mapsto g(x, \Delta)$ is non-decreasing.
        \end{itemize}
        Let $h : \mathcal{Y} \to [-\infty,\infty]$ be $\mathcal{B}$-measurable and define
        \begin{align*}
            \widehat{\Delta}(x,y)
                := \inf\{\Delta \geq 0 : g(x,\Delta) \geq h(y)\},
        \end{align*}
        where $(\inf \emptyset = \infty)$.
        Then $(x,y) \mapsto \widehat{\Delta}(x,y)$ is $\mathcal{A} \otimes \mathcal{B}$-measurable.
    \end{lemma}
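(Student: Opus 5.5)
The plan is to prove measurability by showing that every sublevel set $\{(x,y) : \widehat{\Delta}(x,y) < t\}$ is in $\mathcal{A} \otimes \mathcal{B}$ for $t \in \mathbb{R}$. Since $\widehat{\Delta}$ takes values in $[0,\infty]$, and sets of the form $\{\widehat{\Delta} < t\}$ generate the Borel $\sigma$-algebra on $[0,\infty]$, this suffices. The key point is that monotonicity of $\Delta \mapsto g(x,\Delta)$ reduces the uncountable infimum to a countable union over rational $\Delta$.

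First, I fix $t > 0$; for $t \leq 0$ the set is empty. I then claim
\begin{align*}
    \{(x,y) : \widehat{\Delta}(x,y) < t\}
        = \bigcup_{r \in \mathbb{Q} \cap [0,t)} \{(x,y) : g(x,r) \geq h(y)\}.
\end{align*}
For the inclusion "$\supseteq$": if $g(x,r) \geq h(y)$ for some rational $r < t$, then by definition $\widehat{\Delta}(x,y) \leq r < t$.

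For "$\subseteq$": if $\widehat{\Delta}(x,y) < t$, there is some $\Delta_0 < t$ in the set, so $g(x,\Delta_0) \geq h(y)$. Choose a rational $r \in [\Delta_0, t)$. This is possible because $[\Delta_0,t)$ is a nondegenerate interval. By monotonicity, $g(x,r) \geq g(x,\Delta_0) \geq h(y)$. Note that I do not need right-continuity here, because I use a rational lying above $\Delta_0$ rather than approximating from below.

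Second, I show each set $\{(x,y) : g(x,r) \geq h(y)\}$ is product-measurable. The maps $(x,y) \mapsto g(x,r)$ and $(x,y) \mapsto h(y)$ are $\mathcal{A}\otimes\mathcal{B}$-measurable as compositions with the coordinate projections. The set where one extended-real measurable function dominates another is measurable. Concretely, it is the complement of $\{g < h\} = \bigcup_{q \in \mathbb{Q}} \{g(x,r) < q\} \cap \{q < h(y)\}$, which also handles the values $\pm\infty$ correctly. A countable union of such sets is then measurable.

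The main obstacle is purely the care in the second inclusion: handling extended-real values and making sure the countable reduction is exact. Choosing rationals above $\Delta_0$, using the strict inequality $t > \Delta_0$, resolves it. The convention $\inf\emptyset=\infty$ is automatically consistent, since $\{\widehat{\Delta} = \infty\}$ is the complement of $\bigcup_{n} \{\widehat{\Delta} < n\}$.
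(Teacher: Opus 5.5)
Your proposal is correct and follows the paper's proof essentially step for step. Both write $\{\widehat{\Delta} < t\}$ as the countable union over rationals $r \in [0,t)$ of $\{g(x,r) \geq h(y)\}$, pick $r \in [\Delta_0, t)$ and use monotonicity, and conclude measurability from that of each set. You add a little extra care on the empty cases $t \leq 0$ and on why $\{g \geq h\}$ is measurable for extended-real-valued functions, which is a welcome detail.
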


    \begin{proof}
        Fix $t \geq 0$.
        Using the monotonicity in $\Delta$,
        \begin{align}\label{eq:countable_union}
            \{(x,y) : \widehat{\Delta}(x,y) < t\}
                = \bigcup_{q \in \mathbb{Q} \cap [0, t)} \{(x,y) : g(x,q)\ge h(y)\}.
        \end{align}
        Indeed, $\widehat{\Delta}(x,y) < t$ implies that there exists some $\Delta_0 < t$ with $g(x, \Delta_0) \geq h(y)$.
        Select $q \in \mathbb{Q} \cap [\Delta_0, t)$, which is possible since $\mathbb{Q}$ is dense.
        By monotonicity, $q \geq \Delta_0$ implies $g(x, q) \geq g(x, \Delta_0) \geq h(y)$.
        This means $\{(x,y) : \widehat{\Delta}(x,y) < t\}
                \subseteq \bigcup_{q \in \mathbb{Q} \cap [0, t)} \{(x,y) : g(x,q)\ge h(y)\}$.
        Conversely, if $g(x, q) \geq h(y)$ for some $q < t$, then $\widehat{\Delta}(x, y) \leq q < t$, so that $\{(x,y) : \widehat{\Delta}(x,y) < t\}
                \supseteq \bigcup_{q \in \mathbb{Q} \cap [0, t)} \{(x,y) : g(x,q)\ge h(y)\}$.

        Now, fix $q \in \mathbb{Q}$.
        Then, $(x, y) \mapsto g(x, q)$ is $\mathcal{A} \otimes \mathcal{B}$-measurable because it only depends on $x$.
        Likewise, $(x, y) \mapsto h(y)$ is also $\mathcal{A} \otimes \mathcal{B}$-measurable.
        Hence the set $\{(x, y): g(x, q) \geq h(y)\}$ is $\mathcal{A} \otimes \mathcal{B}$-measurable. 
        
        Now, as the union on the right-hand-side \eqref{eq:countable_union} is countable it is $\mathcal{A} \otimes \mathcal{B}$-measurable.
        Hence, the left-hand-side $\{(x,y) : \widehat{\Delta}(x,y) < t\}$ is $\mathcal{A} \otimes \mathcal{B}$-measurable for every $t \geq 0$
        Hence, the left-hand-side is also measurable for every $t$.
        As a consequence, $\widehat{\Delta}$ is $\mathcal{A} \otimes \mathcal{B}$-measurable.
    \end{proof}

    \subsection{Proof of Proposition \ref{prp:Delta_test_equivalence}}\label{proof:Delta_test_equivalence}
        We start from a measurable $\widehat{\Delta}_\alpha$.
        This induces the tests $\Delta \mapsto \phi_\Delta^\alpha(x)$ through $\phi_\Delta^\alpha(x) = \mathbb{I}\{\Delta \geq \widehat{\Delta}_\alpha(x)\}/\alpha$.
        For every $\Delta \geq 0$, the map $x \mapsto \phi_\Delta^\alpha(x)$ is measurable, since $x \mapsto \mathbb{I}\{\Delta \geq \widehat{\Delta}_\alpha(x)\}$ is measurable by the measurability of $\widehat{\Delta}_\alpha$.
        Moreover, $\Delta \mapsto \phi_\Delta^\alpha(x)$ is non-decreasing and right-continuous for every $x$, by construction.
        Finally, if $\widehat{\Delta}_\alpha$ is valid, then the validity of $\phi_\Delta^\alpha$ for $H_0^\Delta$ immediately follows from \eqref{ineq:guarantee_formal}.
    
        We now consider the reverse direction, starting from a non-decreasing and right-continuous collection of tests $\Delta \mapsto \phi_\Delta^\alpha(x)$ for which the map $x \mapsto \phi_\Delta^\alpha(x)$ is measurable for every $\Delta \geq 0$.
        Define $\widehat{\Delta}_\alpha(x) = \inf\{\Delta : \phi_\Delta^\alpha(x) \geq 1/\alpha\}$ with $\inf \emptyset = \infty$.
        Measurability of $\widehat{\Delta}_\alpha$ follows from Lemma \ref{lem:invert_threshold}.
        For the validity, note that $\Delta \geq \widehat{\Delta}_\alpha(x)$ if and only if $\phi_\Delta^\alpha(x) = 1/\alpha$.
        Hence, if each test $\phi_\Delta^\alpha$ is valid for $H_0^\Delta$, then $\widehat{\Delta}_\alpha$ is valid. \qed

    \subsection{Proof of Theorem \ref{thm:post-hoc_valid}}\label{proof:post-hoc_valid}
        We start from a measurable equivalence curve $(x, \alpha) \mapsto \widehat{\Delta}_\alpha(x)$, and assume that $\alpha \mapsto \widehat{\Delta}_\alpha(x)$ is non-increasing and right-continuous for every $x$.
        This induces a collection $\Delta \mapsto \e_\Delta$ through
        \begin{align*}
            \e_\Delta(x)
                = \sup\left\{\frac{1}{\alpha} : \Delta \geq \widehat{\Delta}_\alpha(x)\right\}
                = \frac{1}{\inf\{\alpha > 0 : \Delta \geq \widehat{\Delta}_\alpha(x)\}},
        \end{align*}
        where we use the conventions $1/\infty = 0$ and $1/0 = \infty$.

        For the measurability, we prepare for applying Lemma \ref{lem:invert_threshold} by defining the p-value $p_\Delta(x) = 1/\e_\Delta(x)$.
        Measurability of $(x, \Delta) \mapsto p_\Delta(x)$ then follows from Lemma \ref{lem:invert_threshold} with $g((x, \Delta), \alpha) := -\widehat{\Delta}_\alpha(x)$ and $h(\Delta) := -\Delta$.
        As a consequence $x \mapsto \e_\Delta(x)$ is measurable.

        We now consider the reverse direction, starting from a non-decreasing and right-continuous collection of e-values $\Delta \mapsto \e_\Delta(x)$ such that for every $\Delta \geq 0$ the map $x \mapsto \e_\Delta(x)$ is measurable.
        Define the equivalence curve through $\widehat{\Delta}_\alpha(x) = \inf\{\Delta \geq 0 : \e_\Delta(x) \geq 1/\alpha\}$.
        Measurability of $(x, \alpha) \mapsto \widehat{\Delta}_\alpha(x)$ then follows from Lemma \ref{lem:invert_threshold} with $g(x, \Delta) = \e_\Delta(x)$ and $h(\alpha) = 1/\alpha$.

        Now for the post-hoc validity: by the equivalence of $\widehat{\Delta}_\alpha \leq \Delta$ and $\e_\Delta \geq 1/\alpha$, the family of tests $\alpha \mapsto \phi_\Delta^\alpha(x) = \mathbb{I}\{\widehat{\Delta}_\alpha(x) \leq \Delta\} / \alpha = \mathbb{I}\{\e_\Delta(x) \geq 1/\alpha\} / \alpha$ is exactly the test family induced by the e-value $\e_\Delta$.
        Therefore, the if-and-only-if follow directly from Theorem 2 of \citet{koning2023post}. \qed

\subsection{Proof of Proposition \ref{prp:mergingcurves}}\label{app:merging}

\begin{proof}[Proof for multiplication of e-value curves]
Define the threshold sets
\[
S_\alpha^\times
:=
\left\{\Delta\ge 0:\e_\Delta^1\e_\Delta^2\ge \frac{1}{\alpha}\right\},
\qquad
S_{\alpha}^j
:=
\left\{\Delta\ge 0:\e_\Delta^j\ge \frac{1}{\alpha}\right\},
\quad j=1,2.
\]
By definition, $
\widehat{\Delta}_\alpha^\times=\inf S_\alpha^\times$ and $
\widehat{\Delta}_{\alpha}^j=\inf S_{\alpha}^j.$ We now first show that
\begin{equation}\label{eq:set_identity_product_merge}
S_\alpha^\times
=
\bigcup_{\substack{\alpha_1,\alpha_2>0\\
\alpha_1\alpha_2=\alpha}}
\bigl(S_{\alpha_1}^1\cap S_{\alpha_2}^2\bigr),
\end{equation}
which we prove by showing both inclusions. The inclusion ``$\supseteq$'' is immediate: if $\Delta\in S_{\alpha_1}^1\cap S_{\alpha_2}^2$ for some $\alpha_1\alpha_2=\alpha$, then $
\e_\Delta^1\ge \frac{1}{\alpha_1}$ and $
\e_\Delta^2\ge \frac{1}{\alpha_2}$ so
$\e_\Delta^1\e_\Delta^2
\ge
\frac{1}{\alpha_1\alpha_2}
=
\frac{1}{\alpha},$
which means $\Delta\in S_\alpha^\times$. For the reverse inclusion, let $\Delta\in S_\alpha^\times$. Then
$\e_\Delta^1\e_\Delta^2\ge 1/\alpha$, so the interval
$\left[1/\e_\Delta^1,\alpha\e_\Delta^2\right]$
is nonempty (with the convention $1/\infty=0$). Choose any $\alpha_1>0$ in this interval and define
$\alpha_2:=\alpha/\alpha_1$. Then $\alpha_1\alpha_2=\alpha$,
$\alpha_1\ge 1/\e_\Delta^1$ implies $\e_\Delta^1\ge 1/\alpha_1$, and
$\alpha_1\le \alpha\e_\Delta^2$ implies
$\alpha_2=\alpha/\alpha_1\ge 1/\e_\Delta^2$, hence $\e_\Delta^2\ge 1/\alpha_2$.
Thus $\Delta\in S_{\alpha_1}^1\cap S_{\alpha_2}^2$, proving \eqref{eq:set_identity_product_merge}.

Now, since each $\Delta\mapsto \e_\Delta^j$ is non-decreasing and right-continuous, $
S_{\alpha}=[\widehat{\Delta}_{\alpha}^j,\infty)$ for $j=1,2.$
Therefore, for every $\alpha_1\alpha_2=\alpha$,
\[
S_{\alpha_1}^1\cap S_{\alpha_2}^2
=
\left[\max\{\widehat{\Delta}_{\alpha_1}^1,\widehat{\Delta}_{\alpha_2}^2\},\infty\right).
\]
Taking the infimum of the union in \eqref{eq:set_identity_product_merge} yields
\[
\widehat{\Delta}_\alpha^\times
=
\inf_{\alpha_1\alpha_2=\alpha}
\max\{\widehat{\Delta}_{\alpha_1}^1,\widehat{\Delta}_{\alpha_2}^2\},
\]
as claimed.
\end{proof}
\begin{proof}[Proof of weighted average of e-value curves]
The proof for the weighted average follows the same structure as the multiplication proof. Define
\[
S_\alpha^w:=\left\{\Delta\ge 0: w\e_\Delta^1+(1-w)\e_\Delta^2\ge 1/\alpha\right\},
\qquad
S_\alpha^j:=\left\{\Delta\ge 0:\e_\Delta^j\ge 1/\alpha\right\},
\quad j=1,2.
\]
By definition, $\widehat{\Delta}_\alpha^w=\inf S_\alpha^w$ and $\widehat{\Delta}_\alpha^j=\inf S_\alpha^j$. We first show
\begin{equation}\label{eq:set_identity_average_merge}
S_\alpha^w
=
\bigcup_{\substack{\alpha_1,\alpha_2>0\\
\frac{w}{\alpha_1}+\frac{1-w}{\alpha_2}= \frac{1}{\alpha}}}
\left(S_{\alpha_1}^1\cap S_{\alpha_2}^2\right).
\end{equation}
For ``$\supseteq$'': if $\Delta\in S_{\alpha_1}^1\cap S_{\alpha_2}^2$ and
$\frac{w}{\alpha_1}+\frac{1-w}{\alpha_2}= \frac{1}{\alpha}$, then
$\e_\Delta^1\ge 1/\alpha_1$ and $\e_\Delta^2\ge 1/\alpha_2$, so
$w\e_\Delta^1+(1-w)\e_\Delta^2
\ge \frac{w}{\alpha_1}+\frac{1-w}{\alpha_2}
= \frac{1}{\alpha}$, hence $\Delta\in S_\alpha^w$.

For ``$\subseteq$'': let $\Delta\in S_\alpha^w$, so
$w\e_\Delta^1+(1-w)\e_\Delta^2\ge 1/\alpha$.
Set
\[
c:=\frac{1/\alpha}{\,w\e_\Delta^1+(1-w)\e_\Delta^2\,}\in(0,1],
\qquad
\alpha_1:=\frac{1}{c\e_\Delta^1},\quad
\alpha_2:=\frac{1}{c\e_\Delta^2}.
\]
Then $\alpha_1,\alpha_2>0$ and $
\e_\Delta^1\ge c\e_\Delta^1=\frac1{\alpha_1}$ and $
\e_\Delta^2\ge c\e_\Delta^2=\frac1{\alpha_2},
$
so $\Delta\in S_{\alpha_1}^1\cap S_{\alpha_2}^2$. Moreover,
$
\frac{w}{\alpha_1}+\frac{1-w}{\alpha_2}
=
wc\e_\Delta^1+(1-w)c\e_\Delta^2
=
c\bigl(w\e_\Delta^1+(1-w)\e_\Delta^2\bigr)
=
\frac1\alpha.$ So \eqref{eq:set_identity_average_merge} holds. Now using the same steps as in the multiplication proof,
\[
\widehat{\Delta}_\alpha^w
=
\inf_{\substack{\alpha_1,\alpha_2>0:\\[0.2em]
\frac{w}{\alpha_1}+\frac{1-w}{\alpha_2}= \frac{1}{\alpha}}}
\max\left\{\widehat{\Delta}_{\alpha_1}^1,\widehat{\Delta}_{\alpha_2}^2\right\},
\]
as claimed.
\end{proof}

\section{Total positivity}\label{app:tp3counterexample}
We first state the definition of STP$_k$ used throughout, following \citet{karlin1968total}. We then construct a family that is STP$_2$ but fails STP$_3$, showing that STP$_2$ is generally insufficient for our $U$-optimal e-variable results.

\subsection{Definition}

\begin{dfn}[TP$_r$ and STP$_r$]\label{def:tp}
Let $\cM$ and $\cX$ be totally ordered sets, and let $\{p_\mu:\mu\in\cM\}$ be densities on $(\cX,\cA)$ with respect to a common dominating measure $\nu$.
For $r\in\mathbb N$, the family is totally positive of order $r$ (TP$_r$) if, for every $n\in\{1,\dots,r\}$, every $\mu_1<\cdots<\mu_n$ in $\cM$, and every $x_1<\cdots<x_n$ in $\cX$,
\[
\det\!\big[p_{\mu_i}(x_j)\big]_{i,j=1}^n \ge 0.
\]
It is strictly totally positive of order $r$ (STP$_r$) if all such determinants are strictly positive.
If the same condition holds for all $n\ge 1$, the family is TP$_\infty$ (or STP$_\infty$ in the strict case).
\end{dfn}

This hierarchy recovers familiar shape constraints: TP$_1$ means non-negativity of the densities, while TP$_2$ is equivalent to the monotone likelihood ratio (MLR) assumption. Throughout the remainder of this section we impose STP$_3$ on $(\mu,x)\mapsto p_\mu(x)$, which is thus a stronger condition than MLR. The assumption is purely order-based and therefore covers both discrete and continuous sample spaces. It is also broad enough for our applications, including one-parameter exponential families and non-central $t$-distributions (which are STP$_\infty$).

\subsection{Counterexample: STP$_2$ does not imply STP$_3$}\label{sec:tp2tp3}

We give a discrete example showing that STP$_2$ alone does not suffice for Theorem~\ref{thm:Uoptimal}. In this case, we focus on the log-utility so the $U$-optimal e-variable is the numeraire given in Corollary \ref{cor:logoptimal}. 

Let $\cX=\{1,2,3\}$ and $\cM=\{1,2,3,4\}$, and define $\{p_\mu:\mu\in\cM\}$ by
\begin{equation}\label{eq:tp2distribution}
A_p
=
\frac{1}{24}
\begin{pmatrix}
16 & 7 & 1 \\
12 & 6 & 6 \\
6 & 6 & 12 \\
2 & 4 & 18
\end{pmatrix},
\end{equation}
where the $(\mu,x)$ entry is $p_\mu(x)$.  
This family is STP$_2$, but it is not STP$_3$ (the determinant of the upper $3\times3$ block is negative).

Now test $H_0:\mu\in\{1,3,4\}$ against $Q: \mu =2$. Consider
\[
\e_c(x):=\frac{p_2(x)}{c\,p_1(x)+(1-c)\,p_3(x)}.
\]
There is a unique $c^*\approx0.558$ such that
\[
\E_{1}[\e_{c^*}(X)]=\E_{3}[\e_{c^*}(X)]=1.
\]
However, $\E_{4}[\e_{c^*}(X)]>1$, so $\e_{c^*}$ is not valid for the composite null $H_0$. Thus STP$_2$ does not guarantee validity (and hence not numeraire-optimality) of the boundary-mixture likelihood ratio form in Corollary \ref{cor:logoptimal}.

\begin{rmk}
For the reduced null $H_0:\mu\in\{1,3\}$ against $Q:\mu=2$, the same $\e_{c^*}$ is valid and has power ($\E_2[\e_{c^*}(X)]>1$). The failure above is therefore specific to the larger composite null.
\end{rmk}

\subsection{Geometric illustration of STP$_2$ and STP$_3$}

Figure~\ref{fig:tp2counterexample} visualizes \eqref{eq:tp2distribution} on the probability simplex.  
Fix $p_1$ and $p_2$, and let $p=(x',y',z')$ be a candidate third pmf.  
The STP$_2$ constraints for $(p_1,p_2,p)$ reduce to
\[
y' > \frac{x'}{2},\qquad z' > y',
\]
which gives the full colored wedge (green plus yellow).

STP$_3$ adds
\[
\det\!\begin{pmatrix}
p_1(1)&p_1(2)&p_1(3)\\
p_2(1)&p_2(2)&p_2(3)\\
p(1)&p(2)&p(3)
\end{pmatrix}>0
\quad\Longleftrightarrow\quad
3x'-7y'+z' > 0.
\]
So the line $\det=0$ through $p_1$ and $p_2$ splits the STP$_2$ wedge into an STP$_3$ side (green) and an STP$_2$-only side (yellow). Geometrically, the STP$_3$ side is the one obtained by moving convexly toward the vertex $(0,0,1)$. In particular, $p_3$ and $p_4$ lie on the STP$_2$-only side, so the family is STP$_2$ but not STP$_3$.

\begin{figure}[H]
    \centering
    \includegraphics[width = 0.8\linewidth]{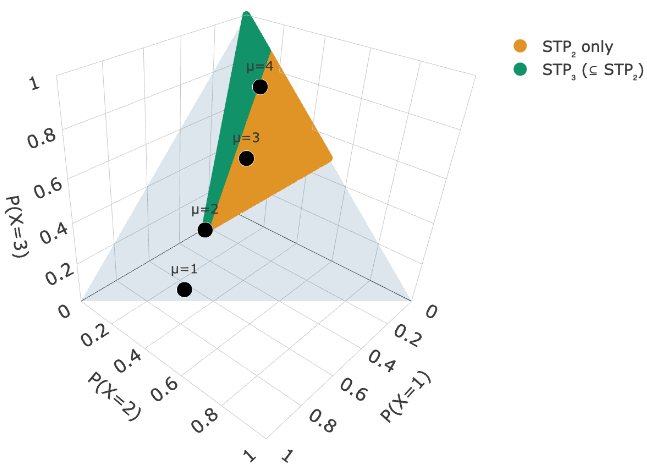}
    \caption{Visualization of the STP$_2$ and STP$_3$ regions in the three-dimensional probability simplex for the example in Section~\ref{sec:tp2tp3}. The gray triangle is the simplex of probability vectors. The yellow region contains points $p$ for which $(p_{\mu_1},p_{\mu_2},p)$ is STP$_2$ but not STP$_3$, while the green region contains points $p$ for which $(p_{\mu_1},p_{\mu_2},p)$ is STP$_3$ (and hence also STP$_2$).} 
    \label{fig:tp2counterexample}
\end{figure}

\section{Proofs for Section \ref{sec:optimalSTP3}}\label{app:proofsopt}
Although the results in the main paper are stated for parameter and sample spaces that are subsets of the real line, in this appendix we study the general two-sided equivalence testing problem on ordered parameter and sample spaces. Let $\cM$ be a totally ordered parameter space, let $(\cX,\mathcal F)$ be a measurable sample space equipped with a total order on $\cX$, and let $\mathcal P:=\{P_\mu:\mu\in\cM\}$ be a statistical model on $(\cX,\mathcal F)$. Assume $\mathcal P$ is dominated by a $\sigma$-finite measure $\nu$ (for instance, Lebesgue or counting measure), and write $p_\mu:=dP_\mu/d\nu$ for $\mu\in\cM$.

Moreover, let $\mu_1<\mu_2$ be in $\cM$, and define $\cM_1:=\{\mu\in\cM:\mu_1<\mu<\mu_2\}$ as the alternative parameter set. We assume $\cM_1\neq\emptyset$. Throughout this appendix, we denote $p_1:=p_{\mu_1}$ and $p_2:=p_{\mu_2}$. Let $w$ be a probability measure on $\cM_1$, and define $q(x):=\int_{\cM_1} p_\mu(x)\,w(d\mu)$.

Lastly, let $\psi:=(U')^{-1}$. For $c\in[0,1]$ and $\lambda>0$, define 
\[
p_c:=c\,p_1+(1-c)\,p_2,\qquad \e_c(x) = \frac{q(x)}{p_c(x)}, \qquad
\e_{c,\lambda}(x):=\psi\!\left(\lambda\,\frac{p_c(x)}{q(x)}\right).
\]

\subsection{Variation diminishing transformations}

For completeness, we recall the notion of variation diminishing transformations used throughout this appendix, following \citet{brown1981variation}.  
For a finite vector $g=(g_1,\dots,g_m)\in\mathbb R^m$, let $S^-(g)$ be the number of sign changes in $(g_1,\dots,g_m)$ after deleting zero entries (with convention $S^-(0,\dots,0)=-1$). Let $S^+(g)$ be the maximal number of sign changes obtainable by replacing each zero entry by either $+1$ or $-1$.

For a function $h:\cX\to\mathbb R$, define for each $V=\{x_1,\ldots,x_m\}\subset\cX$ indexed so that $x_1<\cdots<x_m$, the restriction $
h_V := \big(h(x_1),\dots,h(x_m)\big),$
and set
\[
S^\pm(h):=\sup_{V\subset\cX,\ |V|<\infty} S^\pm(h_V).
\]
Thus all sign-variation statements are understood via finite ordered restrictions.

Then, following \citet{brown1981variation}, we define
\[
h \text{ has at most one local extremum } \iff S^+(h-\gamma)\le 2 \quad \text{for every } \gamma\in\mathbb R.
\]
This is the notion used in Proposition~\ref{prp:unimodality}. Note that it includes both single-peaked and single-dipped shapes, and also monotone functions.

\subsection{Proof of Proposition \ref{prp:unimodality}}

By definition of STP$_3$, for every $\mu\in\cM_1$, the triplet $(p_1,p_\mu,p_2)$ satisfies the STP$_3$ property. In the next lemma, we establish that the same holds for the triplet $(p_1,q,p_2)$.

\begin{lemma}[STP$_3$ is preserved by mixing over $\cM_1$]\label{lem:stp3mixture}
Assume $\{p_\mu:\mu\in\cM\}$ is STP$_3$. Then $(p_1,q,p_2)$ is STP$_3$.
\end{lemma}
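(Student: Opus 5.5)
We need to show that for $\mu_1<\mu_2$, the triplet $(p_1,q,p_2)$ is STP$_3$, where $q=\int_{\cM_1}p_\mu\,w(d\mu)$ and $w$ is concentrated on $(\mu_1,\mu_2)$. The plan is to use the multilinearity of the determinant in the middle row. STP$_3$ for the ordered triplet involves minors of order $1$, $2$ and $3$ over all choices of rows and all ordered points $x_1<\cdots<x_n$.

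\textbf{Order 3.} For $x_1<x_2<x_3$, the determinant with rows $p_1,q,p_2$ is linear in the middle row, so it equals
\[
\det\begin{pmatrix} p_1(x_j)\\ q(x_j)\\ p_2(x_j)\end{pmatrix}_{j=1}^3=\int_{\cM_1}\det\begin{pmatrix} p_1(x_j)\\ p_\mu(x_j)\\ p_2(x_j)\end{pmatrix}_{j=1}^3 w(d\mu).
\]
Each integrand is strictly positive by STP$_3$ of the family, since $\mu_1<\mu<\mu_2$. The exchange of determinant and integral is justified because the determinant is a finite linear combination of the $q(x_j)$ with fixed coefficients, and each $q(x_j)$ is an integral of nonnegative functions.

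\textbf{Orders 1 and 2.} Order-1 positivity holds because $q(x)>0$ as a mixture of strictly positive densities. For order 2, the minor pairing $p_1$ and $p_2$ does not involve $q$ and is covered directly by STP$_2$ of the family. The minors pairing $q$ with $p_1$ (with $q$ as the second row) and $q$ with $p_2$ (with $q$ as the first row) are each linear in the $q$-row. They therefore integrate strictly positive $2\times 2$ minors $\det[p_1;p_\mu]$ and $\det[p_\mu;p_2]$, both positive because $\mu_1<\mu<\mu_2$.

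\textbf{Strictness.} An integral of a strictly positive function against a probability measure is strictly positive, provided the integral is finite. This needs $q(x)<\infty$ at the relevant points, and I will either assume it or note that it holds wherever $q$ is a genuine density value.

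\textbf{Main obstacle.} The only delicate point is measurability and integrability of $\mu\mapsto p_\mu(x)$, which is needed for $q$ to be well defined at each point. I will take this as implicit in the definition of $q$. No variation-diminishing machinery is required; the argument rests entirely on multilinearity.
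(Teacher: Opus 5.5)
Your proposal is correct and follows essentially the same route as the paper. Both arguments use multilinearity of the $3\times 3$ determinant in the $q$-row, integrated against $w$, together with positivity of $q$ and of the mixed $2\times 2$ minors. The paper phrases the order-2 step as $q/p_1$ strictly increasing and $q/p_2$ strictly decreasing, which is equivalent to your integrated $2\times 2$ minors. Your explicit handling of the $(p_1,p_2)$ minor, and of the finiteness of $q(x_j)$ needed to exchange determinant and integral, covers points the paper leaves implicit.
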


\begin{proof}
We verify STP$_1$, STP$_2$, and the $3\times 3$ determinant condition.

STP$_1$ is immediate: $q(x)>0$ since $p_\mu(x)>0$ and $w$ is nonnegative with total mass $1$.

For STP$_2$, fix $x_1<x_2$. Since STP$_3\Rightarrow$ STP$_2$, for each $\mu\in\cM_1$ the ratio
$x\mapsto p_\mu(x)/p_1(x)$ is strictly increasing. Hence
\[
\frac{q(x_2)}{p_1(x_2)}-\frac{q(x_1)}{p_1(x_1)}
=
\int_{\cM_1}
\left(
\frac{p_\mu(x_2)}{p_1(x_2)}-\frac{p_\mu(x_1)}{p_1(x_1)}
\right)w(d\mu)>0,
\]
so $(p_1,q)$ is STP$_2$. Likewise, for each $\mu\in\cM_1$, the ratio
$x\mapsto p_\mu(x)/p_2(x)$ is strictly decreasing, and therefore
\[
\frac{q(x_2)}{p_2(x_2)}-\frac{q(x_1)}{p_2(x_1)}
=
\int_{\cM_1}
\left(
\frac{p_\mu(x_2)}{p_2(x_2)}-\frac{p_\mu(x_1)}{p_2(x_1)}
\right)w(d\mu)<0,
\]
equivalently $p_2/q$ is strictly increasing; thus $(q,p_2)$ is STP$_2$.

Finally, fix $x_1<x_2<x_3$. By linearity of the determinant in the second row,
\[
\det\!\begin{pmatrix}
p_1(x_1)&p_1(x_2)&p_1(x_3)\\
q(x_1)&q(x_2)&q(x_3)\\
p_2(x_1)&p_2(x_2)&p_2(x_3)
\end{pmatrix}
=
\int_{\cM_1}
\det\!\begin{pmatrix}
p_1(x_1)&p_1(x_2)&p_1(x_3)\\
p_{\mu}(x_1)&p_{\mu}(x_2)&p_{\mu}(x_3)\\
p_2(x_1)&p_2(x_2)&p_2(x_3)
\end{pmatrix}
w(d\mu).
\]
Each integrand is strictly positive by STP$_3$ of the original family, so the integral is strictly positive. Therefore $(p_1,q,p_2)$ is STP$_3$.
\end{proof}

\noindent
\large {\textbf{Proposition \ref{prp:unimodality}}}
\normalsize

\begin{proof}
%Fix $c\in(0,1)$ and $\lambda>0$. 
We proceed in two steps. First, we prove that $S^+(\e_c-\gamma)\le 2$ for all $\gamma\in\mathbb R$ and subsequently we show $S^+(\e_{c,\lambda}-\gamma)\le 2$ for all $\gamma\in\mathbb R$ .
Define
\[
h_{c,\gamma}(x):=q(x)-\gamma p_c(x)=p_c(x)\big(\e_c(x)-\gamma\big).
\]
Since $p_c(x)>0$ for all $x$, Proposition 3.1 of \citet{brown1981variation} gives $
S^+(\e_c-\gamma)=S^+(h_{c,\gamma}).$
If $\gamma\le 0$, then $\e_c(x)-\gamma>0$ for all $x$, so $S^+(\e_c-\gamma)=0\le 2$. It remains to show $S^+(h_{c,\gamma})\le 2$ for $\gamma>0$. Fix $\gamma>0$, and let $V=\{x_1,\ldots,x_m\}\subset\cX$ be finite, indexed so that $x_1<\cdots<x_m$. Define
\[
G_V:=
\begin{pmatrix}
p_1(x_1) & q(x_1) & p_2(x_1)\\
\vdots & \vdots & \vdots\\
p_1(x_m) & q(x_m) & p_2(x_m)
\end{pmatrix},
\qquad
b_{c,\gamma}:=(-\gamma c,\,1,\,-\gamma(1-c))^\top.
\]
Then $h_{c,\gamma,V}=G_V b_{c,\gamma}$ and $S^-(b_{c,\gamma})=2$. By Lemma~\ref{lem:stp3mixture}, $(p_1,q,p_2)$ is STP$_3$. Hence, by STP$_3\Leftrightarrow$ SVR$_3$ and transpose symmetry \citep[Theorem 3.2]{brown1981variation},
\[
S^+(G_Vu)\le S^-(u)\quad\text{for all }u\in\mathbb R^3\text{ with }S^-(u)\le 2.
\]
Taking $u=b_{c,\gamma}$ yields $S^+(h_{c,\gamma,V})\le 2$. Since $V$ is arbitrary, $S^+(h_{c,\gamma})\le 2$. Therefore
\[
S^+(\e_c-\gamma)\le 2\quad\text{for all }\gamma\in\mathbb R.
\]

Second, we pass from $\e_c$ to $\e_{c,\lambda}$. Fix $\gamma\in\mathbb R$ and define $h_\gamma(x):=\e_{c,\lambda}(x)-\gamma$. If $\gamma\notin \psi((0,\infty))$, then $h_\gamma$ has constant sign and $S^+(h_\gamma)=0$.
If $\gamma\in \psi((0,\infty))$, set $
t_\gamma:=\frac{\lambda}{\psi^{-1}(\gamma)}>0.$
Since $\psi$ is strictly decreasing and $\e_c(x)=q(x)/p_c(x)>0$,
\[
\operatorname{sgn}(h_\gamma(x))
=\operatorname{sgn}\!\left(\psi\!\left(\lambda\,\frac{p_c(x)}{q(x)}\right)-\gamma\right)
=\operatorname{sgn}\!\left(\e_c(x)-t_\gamma\right).
\]
Hence
\[
S^+(h_\gamma)=S^+(\e_c-t_\gamma)\le 2
\]
by the first step. So $S^+(\e_{c,\lambda}-\gamma)\le 2$ for all $\gamma\in\mathbb R$.
\end{proof}

\subsection{Proof of Proposition \ref{prp:determineconstantutility}}

\begin{proof}
We prove the result in two parts: first we identify $c$ for fixed $\lambda$, then we identify $\lambda$.

Fix $\lambda>0$, and define
\[
A_\lambda(c):=\E_{\mu_1}[\e_{c,\lambda}(X)],\qquad
B_\lambda(c):=\E_{\mu_2}[\e_{c,\lambda}(X)],\qquad
g_\lambda(c):=A_\lambda(c)-B_\lambda(c).
\]
Let $
C_U:=\sup_{x>0} xU'(x)<\infty.$
We then obtain that for any $t>0$ with $x=\psi(t)$ (so $t=U'(x)$), which means $t\,\psi(t)=xU'(x)\le C_U$ and  
hence $
\psi(t)\le \frac{C_U}{t}$.

We now show that: (i) $g_\lambda$ is strictly decreasing on $(0,1)$; (ii) $g_\lambda$ is continuous on $(0,1)$; (iii) $g_\lambda(c)>0$ for $c$ near $0$ and $g_\lambda(c)<0$ for $c$ near $1$. 

For (i), let $0<c<c'<1$. Since
\[
\frac{p_c-p_{c'}}{q}=(c-c')\frac{p_1-p_2}{q},
\]
and $\psi$ is strictly decreasing, we get that $
\operatorname{sgn}\!\big(\e_{c,\lambda}-\e_{c',\lambda}\big)
=
\operatorname{sgn}(p_1-p_2).$
Therefore
\[
g_\lambda(c)-g_\lambda(c')
=
\int_{\cX}\big(\e_{c,\lambda}-\e_{c',\lambda}\big)(p_1-p_2)\,d\nu>0,
\]
so $g_\lambda$ is strictly decreasing in $c$.

For (ii), fix $c_0\in(0,1)$ and let $c_n\to c_0$. Choose $\delta\in(0,\min\{c_0,1-c_0\})$. For all large $n$, we then know that $c_n\in[\delta,1-\delta]$. Set $m_\delta:=\min\{\delta,1-\delta\}>0$ which gives us that $
p_{c_n}\ge m_\delta(p_1+p_2),$
so, by $\psi(t)\le C_U/t$,
\[
\e_{c_n,\lambda}
=
\psi\!\left(\lambda\frac{p_{c_n}}{q}\right)
\le
\frac{C_U}{\lambda}\frac{q}{p_{c_n}}
\le
\frac{C_U}{\lambda m_\delta}\frac{q}{p_1+p_2}.
\]
Hence
\[
\big|\e_{c_n,\lambda}(p_1-p_2)\big|
\le
\frac{C_U}{\lambda m_\delta}\,q,
\]
and the right-hand side is integrable since $\int_{\cX} q\,d\nu=1$. Also, we have pointwise convergence $
\e_{c_n,\lambda}(x)\to \e_{c_0,\lambda}(x)$
because $c\mapsto p_c(x)$ is affine and $\psi$ is continuous on $(0,\infty)$. Now by applying the dominated convergence theorem, we know that $
g_\lambda(c_n)\to g_\lambda(c_0),$
so $g_\lambda$ is continuous on $(0,1)$.

For (iii), define
\[
h_0(x):=\psi\!\left(\lambda\frac{p_2(x)}{q(x)}\right),\qquad
h_1(x):=\psi\!\left(\lambda\frac{p_1(x)}{q(x)}\right),
\]
which will be helpful to study the behavior of $g_{\lambda}(c)$ for $c \downarrow 0$ and $c \uparrow 1$.
By Lemma~\ref{lem:stp3mixture}, $(p_1,q,p_2)$ is STP$_3$, hence pairwise STP$_2$. Thus $p_2/q$ is increasing and $p_1/q$ is decreasing. Since $\psi$ is strictly decreasing, $h_0$ is decreasing and $h_1$ is increasing. By strict SVR$_2$/MLR ordering,
\[
A_0:=\E_{\mu_1}[h_0]>\E_{\mu_2}[h_0]=:B_0,\qquad
A_1:=\E_{\mu_1}[h_1]<\E_{\mu_2}[h_1]=:B_1.
\]
Moreover, using $\psi(t)\le C_U/t$,
\[
B_0=\int h_0\,p_2\,d\nu
\le \frac{C_U}{\lambda}\int q\,d\nu
=\frac{C_U}{\lambda}<\infty,
\]
and similarly $A_1<\infty$. Now let $c_n\downarrow0$ which gives pointwise convergence $\e_{c_n,\lambda}\to h_0$. Then by applying Fatou's lemma $
\liminf_{n\to\infty}A_\lambda(c_n)\ge A_0.$
Also, for $n$ large enough (that is, $c_n\le 1/2$), we get the following bound $p_2\e_{c_n,\lambda}
\le
\frac{C_U}{\lambda}\frac{q\,p_2}{p_{c_n}}
\le
\frac{2C_U}{\lambda}q$ which is integrable.
Thus, by dominated convergence, $
B_\lambda(c_n)\to B_0.$
Hence
\[
\liminf_{n\to\infty} g_\lambda(c_n)\ge A_0-B_0>0.
\]
Since $(c_n)$ was arbitrary, $\liminf_{c\downarrow0} g_\lambda(c)>0$.

Similarly, let $c_n\uparrow1$. %Then pointwise $\e_{c_n,\lambda}\to h_1$, and for $n$ large enough ($c_n\ge 1/2$),
%\[\e_{c_n,\lambda}p_1\le\frac{C_U}{\lambda}\frac{q\,p_1}{p_{c_n}}\le\frac{2C_U}{\lambda}q,\] 
%so $A_\lambda(c_n)\to A_1$ by dominated convergence. 
Using analogous arguments, we get by Fatou that $
\liminf_{n\to\infty}B_\lambda(c_n)\ge B_1$
which yields $\limsup_{c\uparrow1} g_\lambda(c)<0$. Thus there exist $c_-,c_+\in(0,1)$ with $c_-<c_+$ such that $
g_\lambda(c_-)>0>g_\lambda(c_+).$
By continuity of $g_\lambda$ on $(0,1)$, IVT yields $c(\lambda)\in(c_-,c_+)$ with
\[
A_\lambda(c(\lambda))=B_\lambda(c(\lambda)).
\]
By strict monotonicity, this $c(\lambda)$ is unique. 

Let us now identify $\lambda$. Define
\[
m(\lambda):=A_\lambda(c(\lambda))=B_\lambda(c(\lambda)).
\]
We note that this function depends on $\lambda$ via the $\lambda$ in $A_{\lambda}$ and $B_{\lambda}$, but also because it pinpoints $c(\lambda)$. We now show that this function is continuous in $\lambda$ after which we will apply the IVT. Fix $\lambda_0>0$, and let $\lambda_n\to\lambda_0$.
Write $c_n:=c(\lambda_n)$, $c_0:=c(\lambda_0)$.

First, $g(c,\lambda):=\int_{\cX}\psi\!\left(\lambda\frac{p_c}{q}\right)(p_1-p_2)\,d\nu$
is jointly continuous on $(0,1)\times(0,\infty)$: for $(c,\lambda)$ in a small rectangle
$[c_-,c_+]\times[\underline\lambda,\bar\lambda]$ around $(c_0,\lambda_0)$, we have
$p_c\ge m(p_1+p_2)$ for some $m>0$, $\underline\lambda>0$, and
\[
\left|\psi\!\left(\lambda\frac{p_c}{q}\right)(p_1-p_2)\right|
\le \frac{C_U}{\underline\lambda}\frac{q}{p_c}|p_1-p_2|
\le \frac{C_U}{\underline\lambda\,m}\,q,
\]
with $\int q\,d\nu=1$. Pointwise continuity then gives joint continuity by DCT.

Second, $\lambda \mapsto c(\lambda)$ is continuous on $(0,\infty)$: since $c\mapsto g_\lambda(c)$ is strictly decreasing and $g_{\lambda_0}(c_0)=0$, choose
$a<c_0<b$ such that $g_{\lambda_0}(a)>0>g_{\lambda_0}(b)$. By joint continuity, for all
$\lambda$ near $\lambda_0$, $g_\lambda(a)>0>g_\lambda(b)$, which means that $c(\lambda)\in(a,b)$.
Now take any subsequence $c_{n_k}$; it has a further subsequence $c_{n_{k_j}}$ converging to some
$\bar c\in[a,b]$ by sequential compactness of $[a,b]$. Because $g_{\lambda_{n_{k_j}}}(c_{n_{k_j}})=0$ and $g$ is jointly continuous,
$g_{\lambda_0}(\bar c)=0$. Uniqueness of the zero implies $\bar c=c_0$. Therefore
$c_n\to c_0$, i.e. $\lambda\mapsto c(\lambda)$ is continuous.

Finally, using that $A_\lambda(c(\lambda))=B_\lambda(c(\lambda))$ we can also write
\[
m(\lambda_n)=\int_{\cX}\psi\!\left(\lambda_n\frac{p_{c(\lambda_n)}}{q}\right)p_{c(\lambda_n)}\,d\nu.
\]
Pointwise, $p_{c(\lambda_n)}(x)\to p_{c(\lambda_0)}(x)$ by continuity of $\lambda \mapsto c(\lambda)$, so the integrand converges to
$\psi\!\left(\lambda_0\frac{p_{c_0}}{q}\right)p_{c_0}$.
For $n$ large, $\lambda_n\ge \lambda_0/2$, we note $
0\le \psi\!\left(\lambda_n\frac{p_{c_n}}{q}\right)p_{c_n}
\le \frac{2C_U}{\lambda_0}q,$
which is integrable. By applying the DCT, $
m(\lambda_n)\to m(\lambda_0).$ This yields that $\lambda\mapsto m(\lambda)$ is continuous on $(0,\infty)$.

To now prove the existence of $\lambda^*$, we apply the IVT. Since $m$ is continuous on $(0,\infty)$, it suffices to show $
\lim_{\lambda\downarrow 0} m(\lambda)=\infty$ and
$
\lim_{\lambda\uparrow\infty} m(\lambda)=0.
$
For the upper tail, we note that
\[
m(\lambda)
=
\E_{p_{c(\lambda)}}\!\left[\psi\!\left(\lambda\frac{p_{c(\lambda)}}{q}\right)\right]
\le
\frac{C_U}{\lambda}\E_{p_{c(\lambda)}}\!\left[\frac{q}{p_{c(\lambda)}}\right]
=
\frac{C_U}{\lambda}\int q\,d\nu
=
\frac{C_U}{\lambda}
\to 0
\quad(\lambda\uparrow\infty).
\]

For the lower tail, let $\lambda_n\downarrow 0$, and we again denote $c_n:=c(\lambda_n)\in(0,1)$.
By compactness of $[0,1]$, pass to a subsequence (not relabeled) with $c_n\to\bar c\in[0,1]$.
Then $p_{c_n}(x)\to p_{\bar c}(x)$, and for each $x$, $
\lambda_n\frac{p_{c_n}(x)}{q(x)}\to 0$ which implies that $
\psi\!\left(\lambda_n\frac{p_{c_n}(x)}{q(x)}\right)\to\infty.$
Thus, by Fatou's lemma,
\[
\liminf_{n\to\infty} m(\lambda_n)
=
\liminf_{n\to\infty}\int \psi\!\left(\lambda_n\frac{p_{c_n}}{q}\right)p_{c_n}\,d\nu
\ge
\int \liminf_{n\to\infty}\psi\!\left(\lambda_n\frac{p_{c_n}}{q}\right)p_{c_n}\,d\nu
=
\infty,
\]
so we get $\lim_{\lambda\downarrow 0} m(\lambda)=\infty$. Therefore $m$ takes values above and below $1$, and by the intermediate value theorem there exists $\lambda^*>0$ such that $
m(\lambda^*)=1.$
By definition of $m$, this yields
\[
\E_{\mu_1}[\e_{c(\lambda^*),\lambda^*}(X)]
=
\E_{\mu_2}[\e_{c(\lambda^*),\lambda^*}(X)]
=
1.
\]
Setting $c^*:=c(\lambda^*)$ proves existence of $(c^*,\lambda^*)$.
\end{proof}

\subsection{Proof of Theorem \ref{thm:Uoptimal}}\label{app:Uoptimalresult}

The following lemma generalizes Theorem 4.1 of \cite{larsson2025numeraire}. It provides a verification result that simplifies checking whether a candidate pair $(\e^*,P^*)$ is optimal in the expected-utility sense. We assume that $U$ satisfies the same conditions as in the paper. %increasing, concave, continuously differentiable, and satisfies the Inada-type conditions $U(0)=\lim_{x\downarrow 0}U(x)=0,$ $U(\infty)=\lim_{x\uparrow\infty}U(x)=\infty,$ $U'(0)=\lim_{x\downarrow 0}U'(x)=\infty,$ $U'(\infty)=\lim_{x\uparrow\infty}U'(x)=0.$

\begin{lemma}\label{lem:utility4.1Larsson}
Assume that $Q \ll \cP$. Let $\e^*\in\cE$ be $Q$-a.s. strictly positive, and assume $\lambda:=\E_Q[U'(\e^*)\e^*]\in(0,\infty)$. Define $P^*$ by $dP^*/dQ=U'(\e^*)/\lambda$. Then $\e^*$ is $U$-optimal if and only if $P^*\in\cP_{\mathrm{eff}}$.
\end{lemma}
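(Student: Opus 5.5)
The argument is a first-order optimality condition for a concave problem over the convex set $\cE$, in the style of Theorem 4.1 of \citet{larsson2025numeraire}. I take $\cP_{\mathrm{eff}}$ to be the effective null: the set of (sub-)probability measures $P'$ with $\E_{P'}[\e]\le 1$ for every $\e\in\cE$. Two preliminary facts are used in both directions.

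First, $P^*$ is a probability measure. This holds because $U'(\e^*)$ is finite and positive $Q$-a.s., since $\e^*>0$ $Q$-a.s. and $U'$ is finite on $(0,\infty)$.

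Second, $P^*$ satisfies
\[
\E_{P^*}[\e^*]=\E_Q[U'(\e^*)\e^*]/\lambda=1 .
\]
The assumption $Q\ll\cP$ ensures that $Q$-a.s. statements about the e-values are meaningful.

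\emph{If direction.} Suppose $P^*\in\cP_{\mathrm{eff}}$ and fix $\e\in\cE$. By concavity and differentiability of $U$,
\[
U(\e)\le U(\e^*)+U'(\e^*)(\e-\e^*)\quad Q\text{-a.s.}
\]
Taking $Q$-expectations gives
\[
\E_Q[U(\e)]-\E_Q[U(\e^*)]\le \lambda\big(\E_{P^*}[\e]-\E_{P^*}[\e^*]\big)=\lambda\big(\E_{P^*}[\e]-1\big)\le 0 .
\]
The last inequality uses $P^*\in\cP_{\mathrm{eff}}$, and it yields $U$-optimality of $\e^*$.

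Some care with integrability is needed here. The positive part of $U'(\e^*)\e$ is integrable because its $Q$-integral is $\lambda\E_{P^*}[\e]\le\lambda$. If $\E_Q[U(\e^*)]=-\infty$, then $\e^*$ cannot be compared meaningfully. I would rule this out by noting that $U(\e^*)\ge U(1)+U'(\e^*)(\e^*-1)$, which is bounded below in $Q$-mean by $U(1)-\lambda$, using $\E_Q[U'(\e^*)]\ge0$. Alternatively, I would interpret the inequality as $\E_Q[U(\e)-U(\e^*)]\le0$.

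\emph{Only if direction.} Suppose $\e^*$ is $U$-optimal and fix $\e\in\cE$. The class $\cE$ is convex, since validity is preserved under convex combinations. Hence
\[
\e_t:=(1-t)\e^*+t\e\in\cE\quad\text{for } t\in(0,1],
\]
and optimality gives $\E_Q[U(\e_t)-U(\e^*)]/t\le 0$.

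By concavity, the difference quotients $D_t:=(U(\e_t)-U(\e^*))/t$ are non-increasing in $t$. As $t\downarrow0$ they increase pointwise to $U'(\e^*)(\e-\e^*)$, where strict positivity of $\e^*$ ensures the derivative is finite. Provided $D_1=U(\e)-U(\e^*)$ has $Q$-integrable negative part, monotone convergence yields
\[
0\ge \E_Q[U'(\e^*)(\e-\e^*)]=\lambda\big(\E_{P^*}[\e]-1\big),
\]
that is, $\E_{P^*}[\e]\le 1$. Since $\e\in\cE$ was arbitrary, this shows $P^*\in\cP_{\mathrm{eff}}$.

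The main obstacle is this limit interchange, specifically the lower integrability of $D_1$ when $U$ is unbounded below (e.g.\ $\log$) and $\e$ may vanish. My first remedy is to replace $\e$ by $\tilde\e=(\e+\e^*)/2\in\cE$. This gives
\[
U(\tilde\e)\ge U(\e^*/2)\ge U(\e^*)-\tfrac12\e^*U'(\e^*/2),
\]
and the last term is controlled using the boundedness of $xU'(x)$. Running the argument with $\tilde\e$ gives $\E_{P^*}[\tilde\e]\le1$. Since $\E_{P^*}[\e^*]=1$, this is equivalent to $\E_{P^*}[\e]\le1$, which completes the argument.
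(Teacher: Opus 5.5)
Your argument is correct and has the same skeleton as the paper's proof. Both reduce $U$-optimality to the variational inequality $\E_Q[U'(\e^*)(\e-\e^*)]\le 0$ for all $\e\in\cE$, and then rewrite it as $\lambda(\E_{P^*}[\e]-1)\le 0$. The difference is in how that inequality is obtained. The paper cites Theorem 3 of \citet{koning2024continuous} for the equivalence between optimality and this first-order condition, and Theorem 2.5 of \citet{larsson2025numeraire} for $Q$-a.s.\ finiteness. You instead prove the equivalence directly: sufficiency from the supergradient inequality of the concave $U$, and necessity from the monotone difference quotients along $(1-t)\e^*+t\e$. Your route makes visible where each hypothesis enters. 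Sufficiency needs only concavity. Necessity uses $\sup_x xU'(x)<\infty$, through your trick of mixing $\e$ with $\e^*$ so that $U(\tilde\e)-U(\e^*)\ge -C_U$. The paper's route is shorter but leaves these integrability conditions inside the cited theorem. Both arguments tacitly need $\E_Q[U(\e^*)]$ to be finite in the necessity direction, or optimality understood in the relative sense $\E_Q[U(\e)-U(\e^*)]\le 0$.

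One preliminary claim is false and should be dropped: $P^*$ need not be a probability measure. Its mass is $\E_Q[U'(\e^*)]/\lambda$, and positivity and finiteness of $U'(\e^*)$ do not force this to equal $1$. For $U=\log$ the mass is $\E_Q[1/\e^*]$. This can be strictly below $1$ even at the numeraire, since the reverse information projection may be a sub-probability measure. It exceeds $1$ for, e.g., the constant $\e^*=1/2$.

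Neither direction actually needs the claim, because you only use linearity of $\e\mapsto\E_{P^*}[\e]$ and $\E_{P^*}[\e^*]=1$. The one place you implicitly lean on it is the lower bound for $\E_Q[U(\e^*)]$ in the ``if'' direction. There, the stated justification ``using $\E_Q[U'(\e^*)]\ge 0$'' points the wrong way, since you need an upper bound on $\E_Q[U'(\e^*)]$. Use instead that the constant $1$ lies in $\cE$. Then $P^*\in\cP_{\mathrm{eff}}$ gives $\E_Q[U'(\e^*)]=\lambda P^*(\cX)\le\lambda$, and hence
\[
\E_Q[U(\e^*)]\ge U(1)+\lambda-\E_Q[U'(\e^*)]\ge U(1).
\]
Similarly, in the ``only if'' direction, the fact that $P^*$ is a sub-probability measure follows from your derived inequality applied to $\e=1$.
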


\begin{proof}
By Theorem 2.5 of \citet{larsson2025numeraire}, all e-variables are $Q$-a.s. finite under $Q\ll\cP$. Since $\e^*>0$ $Q$-a.s. and $U'>0$ on $(0,\infty)$, the Radon--Nikodym derivative $dP^*/dQ=U'(\e^*)/\lambda$ is well-defined $Q$-a.s.

For the implication $(\Rightarrow)$, assume $\e^*$ is $U$-optimal. Then Theorem 3 of \citet{koning2024continuous} gives the first-order condition
\[
\E_Q\!\left[U'(\e^*)(\e-\e^*)\right]\le 0,\qquad \forall \e\in\cE.
\]
Rearranging yields $\E_Q[U'(\e^*)\e]\le \E_Q[U'(\e^*)\e^*]=\lambda$. Dividing by $\lambda>0$, we get $
\E_Q\!\left[\e\,U'(\e^*)/\lambda\right]\le 1,$  for all $ \e\in\cE.$
By definition of $P^*$, this is exactly $\E_{P^*}[\e]\le 1$ for all $\e\in\cE$, so $P^*\in\cP_{\mathrm{eff}}$.

For $(\Leftarrow)$, assume $P^*\in\cP_{\mathrm{eff}}$. By definition of the effective null, $\E_{P^*}[\e]\le 1$ for every $\e\in\cE$. Multiply by $\lambda$ and use $dP^*/dQ=U'(\e^*)/\lambda$:
\[
\lambda\big(\E_{P^*}[\e]-1\big)
=
\E_Q\!\left[\e\,U'(\e^*)\right]-\E_Q\!\left[\e^*U'(\e^*)\right]
=
\E_Q\!\left[U'(\e^*)(\e-\e^*)\right]
\le 0.
\]
Hence the same first-order condition holds for all $\e\in\cE$. Applying Theorem 3 of \citet{koning2024continuous} again, we conclude that $\e^*$ is $U$-optimal.
\end{proof}

\noindent
\large {\textbf{Theorem} \ref{thm:Uoptimal}}
\normalsize

\begin{proof}
Let
\[
I(\mu):=\E_\mu[\e_{c^*,\lambda^*}(X)],\qquad \mu\in\cM,
\]
where $(c^*,\lambda^*)$ is chosen as in Proposition~\ref{prp:determineconstantutility}, so $I(\mu_1)=I(\mu_2)=1$.

For $\gamma\in\mathbb R$, set $h_\gamma(x):=\e_{c^*,\lambda^*}(x)-\gamma$. By Proposition \ref{prp:unimodality}, we have that $S^+(h_\gamma)\le 2$. Since $\{p_\mu\}$ is SVR$_3$, we immediately obtain that $S^+(I-\gamma)\le 2$ for all $\gamma \in \mathbb{R}$, because $I(\mu)-\gamma=\int h_\gamma(x)p_\mu(x)\,d\nu(x)$. Define $J(\mu):=I(\mu)-1$. Then $S^+(J)\le 2$ and $J(\mu_1)=J(\mu_2)=0$. So, only two sign configurations are possible: either $J>0$ on $\cM_1$ and $J<0$ on $\cM_0$, or the reverse.

We now exclude the reverse pattern. Write $R:=q/p_{c^*}$, and let $P^*$ be the measure associated with density $p_{c^*}$. Then $\E_Q[\e_{c^*,\lambda^*}(X)]=\E_{P^*}[R\,\psi(\lambda^*/R)]$. Because $\psi$ is strictly decreasing, the map $r\mapsto \psi(\lambda^*/r)$ is strictly increasing. Therefore both $r\mapsto r$ and $r\mapsto \psi(\lambda^*/r)$ are increasing functions of the same variable, so by the covariance inequality we obtain
$\E_{P^*}[R\,\psi(\lambda^*/R)] \ge \E_{P^*}[R]\E_{P^*}[\psi(\lambda^*/R)]$.
Now $\E_{P^*}[R]=\int q\,d\nu=1$, and
$\E_{P^*}[\psi(\lambda^*/R)]=\E_{P^*}[\e_{c^*,\lambda^*}]
=c^*\E_{\mu_1}[\e_{c^*,\lambda^*}]+(1-c^*)\E_{\mu_2}[\e_{c^*,\lambda^*}]=1$,
where the last equality uses Proposition~\ref{prp:determineconstantutility}. This means that \(\E_Q[\e_{c^*,\lambda^*}(X)]\ge 1\).

Suppose, for contradiction, that the reverse sign pattern holds, i.e. \(J(\mu)<0\) for all \(\mu\in\cM_1\). Then \(I(\mu)=1+J(\mu)<1\) on \(\cM_1\). Since \(Q\) is a mixture over \(\mu\in\cM_1\), we get
\[
\E_Q[\e_{c^*,\lambda^*}]
=
\int_{\cM_1}\E_\mu[\e_{c^*,\lambda^*}]\,w(d\mu)
=
\int_{\cM_1} I(\mu)\,w(d\mu)
<1,
\]
which contradicts \(\E_Q[\e_{c^*,\lambda^*}(X)]\ge 1\). So the reverse pattern is impossible. Therefore \(J(\mu)\le 0\) on \(\cM_0\), i.e. \(I(\mu)\le 1\) for \(\mu\in\cM_0\). This proves that \(\e_{c^*,\lambda^*}\) is a valid e-variable for the null.

To conclude optimality, note that $U'(\e_{c^*,\lambda^*})=\lambda^*\,dP^*/dQ$ by which we can obtain that $dP^*/dQ=U'(\e_{c^*,\lambda^*})/\E_Q[U'(\e_{c^*,\lambda^*})\,\e_{c^*,\lambda^*}]$. Next to that, \(P^*\in\cP_{\mathrm{eff}}\) because for every \(\e\in\cE\), \(\E_{P^*}[\e]=c^*\E_{\mu_1}[\e]+(1-c^*)\E_{\mu_2}[\e]\le 1\) where $\lambda^* < \infty$. Now, Lemma~\ref{lem:utility4.1Larsson} implies that \(\e_{c^*,\lambda^*}\) is \(U\)-optimal. \end{proof}

\section{Proofs for Section \ref{sec:seqtesting}}

\subsection{Proof of Proposition \ref{prp:test_martingale}}\label{app:proofprptestmart}

Note that the right-hand-side is equivalent to $\E^P[\e^{t+1} \mid \mathcal{F}_{t}] \leq \e^{t}$ a.s for every $P \in H$.
The right-to-left implication then follows from the optional sampling theorem (see e.g. Proposition 1.16 in the Supplementary Material of \citet{wang2025extended}).
The left-to-right implication follows from the fact that $\tau = t$ and $\sigma = t - 1$ are stopping times, and defining the conditional e-values through $\e_s = \e^{s} / \e^{s-1}$ if $\e^{s-1} > 0$ and $\e_s = 1$ if $\e^{s-1} = 0$. \qed

\section{Simulations results}

\subsection{Comparison of sequential TOST-E and symmetric $t$-squared}\label{app:comparetostesymt}

We first compare $\{\e_{\mathrm{TOST}}^n\}$ and $\{\e_{\Delta_S}^n\}$ in the symmetric effect-size setting with margins $\pm 0.5$, i.e.,
$H_0:\delta\le -0.5\ \text{or}\ \delta\ge 0.5$
versus
$H_1:\delta=0$.
Data are generated as $X_1,\ldots,X_n\stackrel{iid}{\sim}\mathcal N(0,1)$, so the alternative is true. For each Monte Carlo repetition, both e-processes are computed for all $2\le n\le 50$, with $M=50{,}000$ repetitions.

We view e-values primarily as continuous measures of evidence, and recommend using them in that way \citep{koning2024continuous}. Accordingly, we first report the Monte Carlo average e-value as a function of $n$ (left pane of Figure~\ref{fig:final_figure}). For comparability with standard power summaries, we additionally report the sequential rejection probability
$\mathbb P_{0}\!\left(\sup_{t\le n}\e^t\ge 1/\alpha\right)$ at $\alpha=0.05$, justified by Ville’s inequality (right pane of Figure~\ref{fig:final_figure}).
The symmetric $t$-squared (LR-based) process dominates TOST-E throughout $n$: average e-values are up to 54\% larger, and rejection probabilities are up to 16\% higher. This serves as a benchmark for the asymmetric comparison below.

\captionsetup{font=small}
\captionsetup[subfigure]{font=small,labelformat=empty,justification=centering}

% Preamble

% Match your ggplot colors
\definecolor{symLR}{HTML}{E69F00} % Symmetric LR
\definecolor{tost}{HTML}{0072B2}  % TOST

\begin{figure}[htbp]\label{fig:comparetostsym}
  \centering
  \begingroup
  \captionsetup[subfigure]{labelformat=empty}
  
  % --- Two plots side by side ---
  \begin{subfigure}[t]{0.49\textwidth}
    \centering
    \includegraphics[width=\linewidth]{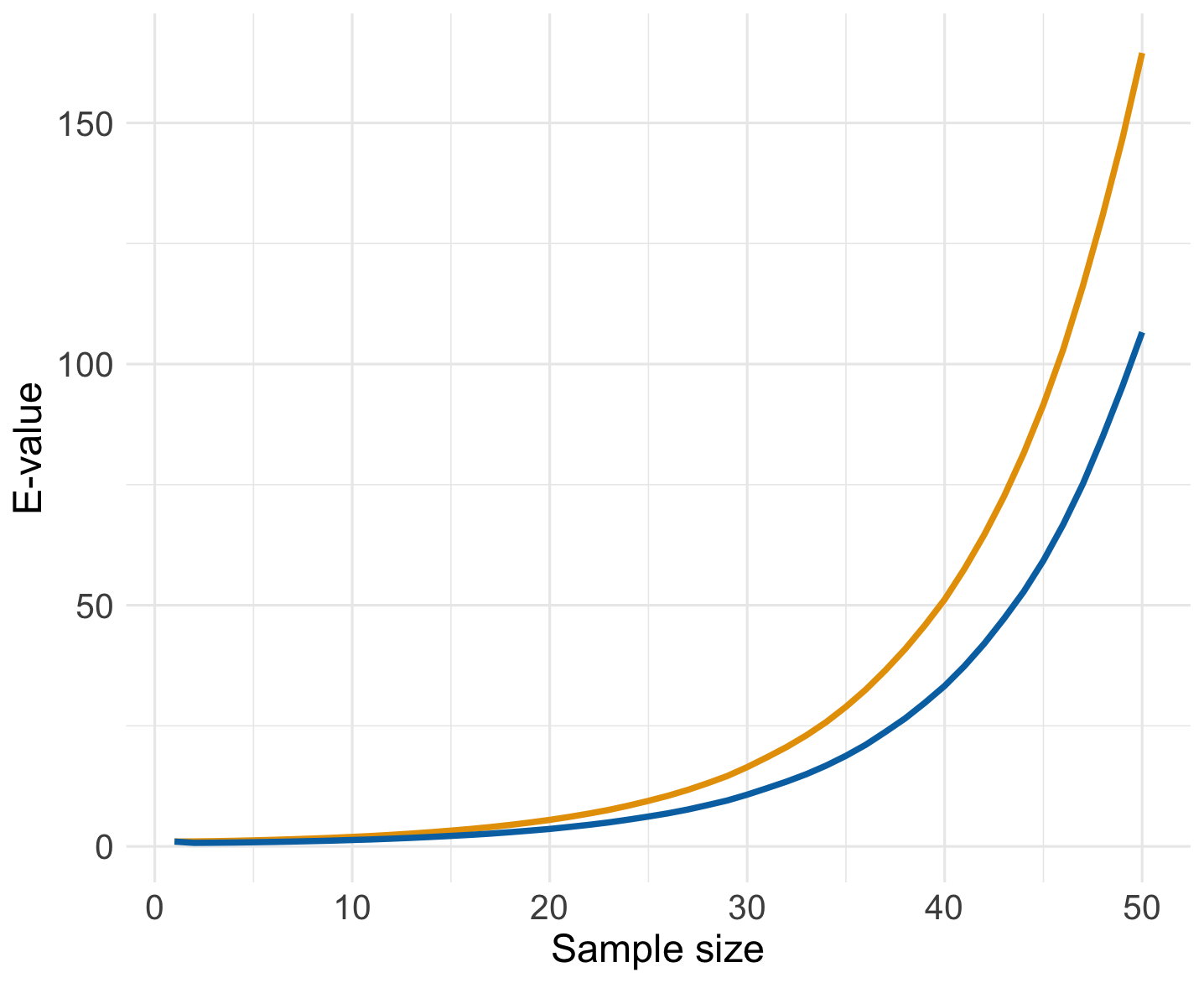}
    \caption{Expected value of e-value}
    \label{fig:eval}
  \end{subfigure}
  \hfill
  \begin{subfigure}[t]{0.49\textwidth}
    \centering
    \includegraphics[width=\linewidth]{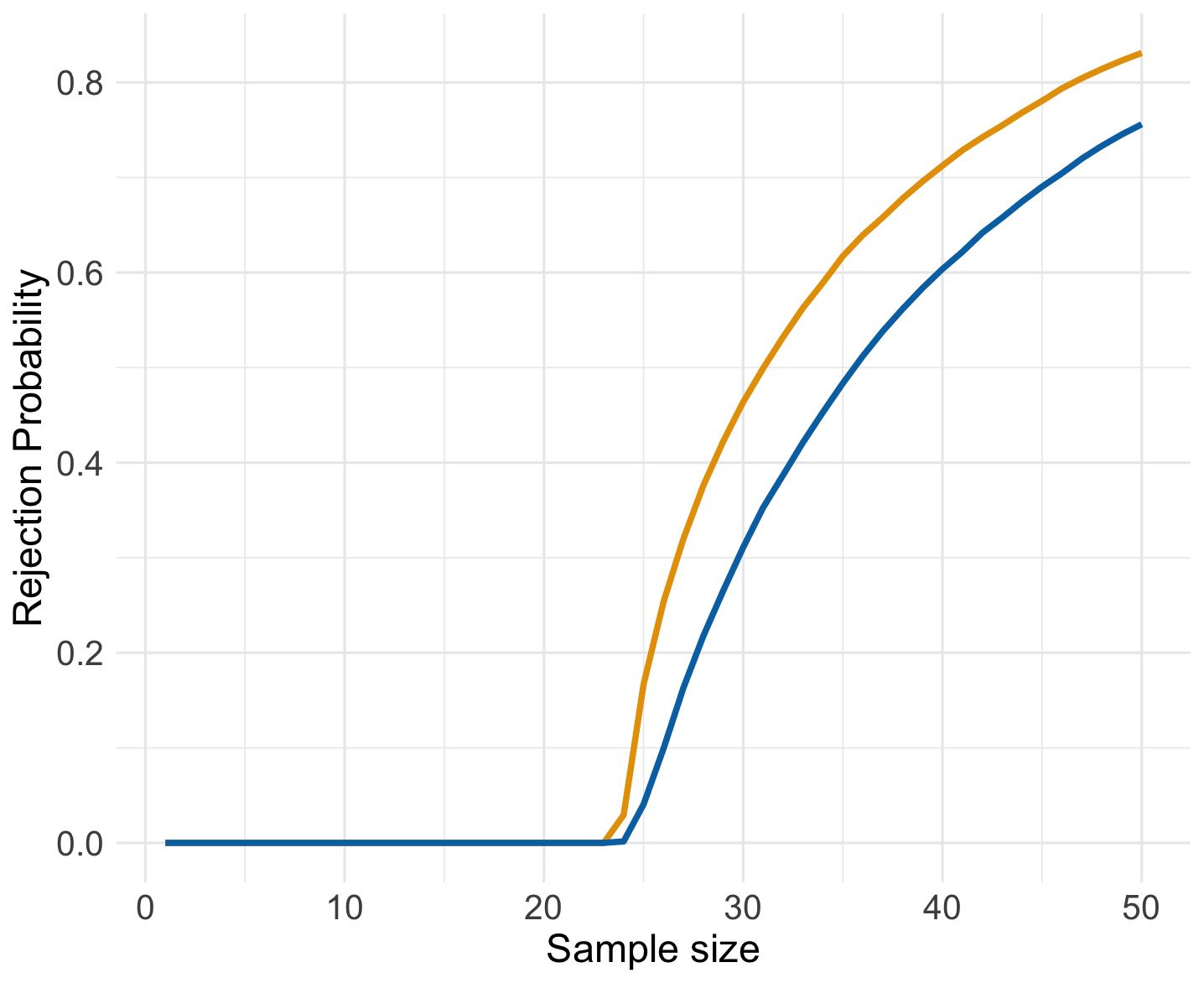}
    \caption{Rejection probability for $\alpha = 0.05$}
    \label{fig:rej}
  \end{subfigure}
  \endgroup

  \vspace{0.5em}

  % --- Compact legend on a single line ---
  \begin{tikzpicture}[baseline]
    % Symmetric LR
    \draw[very thick, symLR] (0,0) -- (1.0,0);
    \node[anchor=west] at (1.1,0) {Symmetric LR};
    
    % Space between legend items
    \node at (4.0,0) {}; % adjust as needed

    % TOST
    \draw[very thick, tost] (4.5,0) -- (5.5,0);
    \node[anchor=west] at (5.6,0) {TOST};
  \end{tikzpicture}
\caption{Comparison of sequential TOST-E and symmetric $t$-squared test under the alternative $\delta=0$, with $X_1,\ldots,X_n\sim\mathcal N(0,1)$, sample sizes $2\le n\le 50$, and $M=50{,}000$ replications.}
  \label{fig:final_figure}
\end{figure}\vspace{-0.2cm}

\subsection{Comparison of sequential TOST-E and product of numeraires}\label{app:comparetostenum}

We next compare $\{\e_{\mathrm{TOST}}^n\}$ with the product-of-numeraires process $\{\e_{\mathrm{num}}^n\}$ for asymmetric margins $(\Delta^-,\Delta^+)=(-0.6,0.4)$, testing
$H_0:\mu\le \Delta^- \ \text{or}\ \mu\ge \Delta^+$
against
$H_1:\Delta^-<\mu<\Delta^+$,
under $X_1,\ldots,X_n \stackrel{iid}{\sim}\mathcal N(\mu,1)$.
We consider two interior alternatives, $\mu=0$ and $\mu=0.3$, and compute both e-processes for $2\le n\le 75$ with $M=50{,}000$ repetitions.

Figure~\ref{fig:comp_num_tost_both} summarizes the results (top panel: $\mu=0$ and bottom panel: $\mu=0.3$). In both DGPs, TOST-E attains larger mean e-values and larger sequential rejection probabilities
$\mathbb P_{\mu}\!\left(\sup_{t\le n}\e^t\ge 1/\alpha\right)$ at $\alpha=0.05$.
The gap is much larger at $\mu=0$ and smaller at $\mu=0.3$, which is closer to the upper margin $\Delta^+=0.4$.
A useful intuition is that the product-of-numeraires construction uses one boundary-mixture denominator that must hedge both null boundaries at each step; in asymmetric settings this hedge can be misaligned with interior alternatives, diluting evidence growth relative to one-sided TOST components.

% If not already loaded:
% \usepackage{subcaption}
% \usepackage{tikz}
% \usepackage{xcolor}

\definecolor{numproc}{HTML}{E69F00} % Product of numeraires
\definecolor{tost}{HTML}{0072B2}    % TOST-E

\begin{figure}[htbp]
  \centering
  \begingroup
  \captionsetup[subfigure]{labelformat=empty,textfont=bf}

  \begin{subfigure}[t]{0.83\textwidth}
    \centering
    \includegraphics[width=\linewidth]{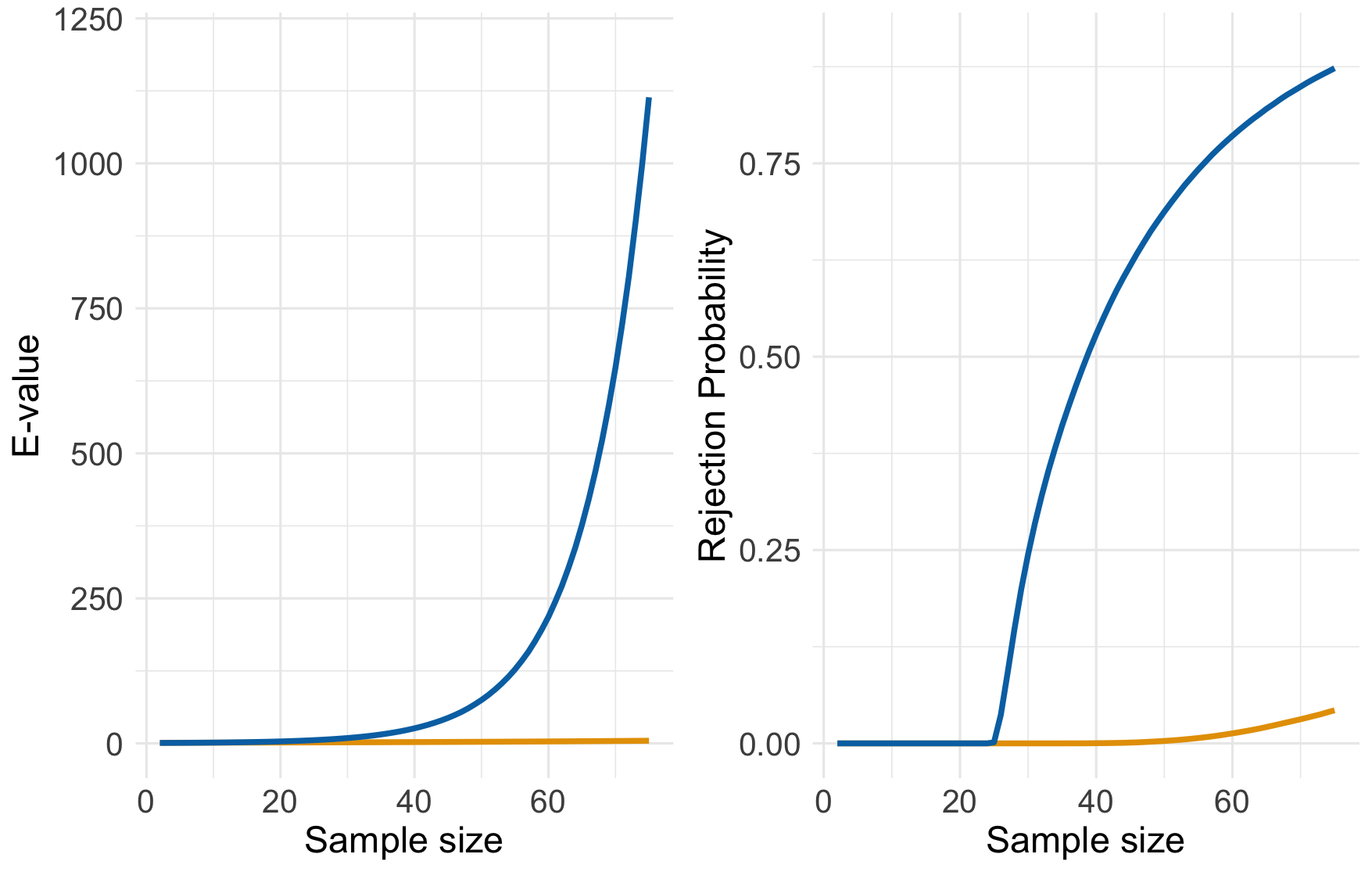}
    \vspace{-0.2em}

    \begin{minipage}[t]{0.49\linewidth}\centering
       {\small Expected value of e-value\hspace{-0.2cm}}
    \end{minipage}\hfill
    \begin{minipage}[t]{0.49\linewidth}\centering
       {\small\quad\quad Rejection probability for $\alpha=0.05$}
    \end{minipage}
    \caption{\bm{$\mu=0$}}
    \label{fig:comp_num_tost_mu0}
  \end{subfigure}

  \vspace{0.3em}

  \begin{subfigure}[t]{0.83\textwidth}
    \centering
    \includegraphics[width=\linewidth]{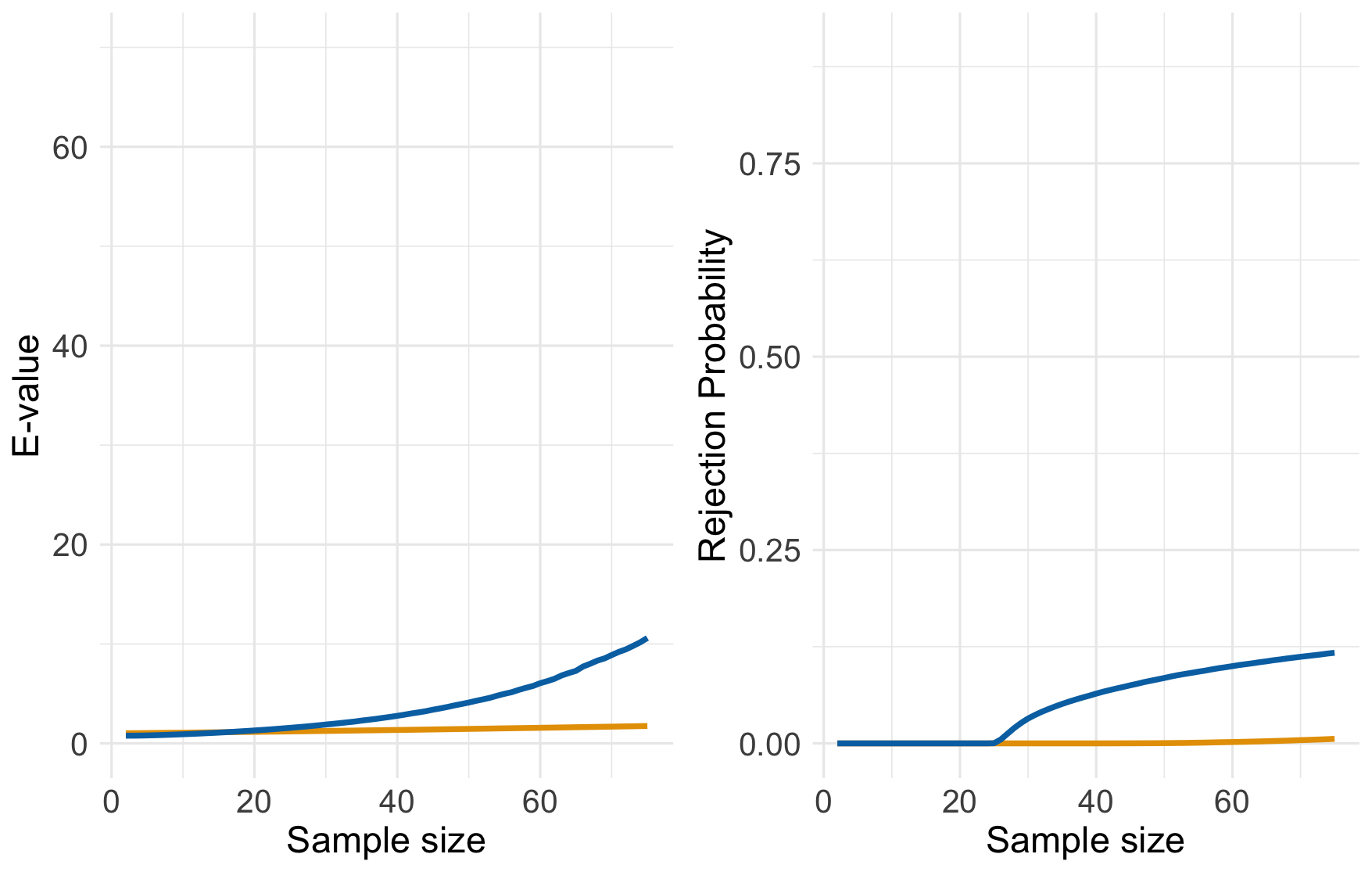}
    \vspace{-0.2em}

    \begin{minipage}[t]{0.49\linewidth}\centering
      {\small Expected value of e-value\hspace{-0.2cm}}
    \end{minipage}\hfill
    \begin{minipage}[t]{0.49\linewidth}\centering
      {\small \quad\quad Rejection probability for $\alpha=0.05$}
    \end{minipage}

    \caption{\bm{$\mu=0.3$}}
    \label{fig:comp_num_tost_mu03}
  \end{subfigure}
  \endgroup

  \vspace{0.5em}
  \begin{tikzpicture}[baseline]
    \draw[very thick, numproc] (0,0) -- (1.0,0);
    \node[anchor=west] at (1.1,0) {Product of numeraires};
    \draw[very thick, tost] (5.4,0) -- (6.4,0);
    \node[anchor=west] at (6.5,0) {TOST};
  \end{tikzpicture}

  \caption{Comparison of sequential TOST-E and product of numeraires for asymmetric margins $(\Delta^-,\Delta^+)=(-0.6,0.4)$ under $X_1,\ldots,X_n\sim\mathcal N(\mu,1)$, with $2\le n\le 75$ and $M=200{,}000$ replications.}
  \label{fig:comp_num_tost_both}
\end{figure}

%\renewcommand{\thesubsection}{\Alph{subsection}}
%\appendix
%\section*{Appendix}

%\input{Appendix}

\end{document}